\numberwithin{equation}{section}
\newtheorem{thm}{Theorem}[section]
\newtheorem{prop}[thm]{Proposition}
\newtheorem{lem}[thm]{Lemma}
\newtheorem{rem}[thm]{Remark}
\newcommand{\nn}{\nonumber}
\newcommand{\ket}[1]{{| #1 \rangle}}
\newcommand{\one}{\mathbf{1}}
\newcommand{\C}{{\mathbb C}}
\newcommand{\Z}{{\mathbb Z}}
\newcommand{\E}{{\mathcal E}}
\newcommand{\F}{\mathcal F}
\newcommand{\cR}{\mathcal{R}}
\newcommand{\bc}{\check{b}}
\newcommand{\dc}{\check{d}}
\newcommand{\pc}{\check{p}}
\newcommand{\qc}{\check{q}}
\newcommand{\uc}{\check{u}}
\newcommand{\Ac}{\check{A}}
\newcommand{\Bc}{\check{B}}
\newcommand{\Ec}{\check{E}}
\newcommand{\Fc}{\check{F}}
\newcommand{\Hc}{\check{H}}
\newcommand{\Kc}{\check{K}}
\newcommand{\Uc}{\check{U}}
\newcommand{\Vc}{\check{V}}
\newcommand{\bs}{\boldsymbol}
\newcommand{\bm}{{\mathfrak{m}}}
\newcommand{\gl}{\mathfrak{gl}}
\newcommand{\e}{\epsilon}
\newcommand{\ve}{\varepsilon}
\newcommand{\mbA}{\mathbf{A}}
\newcommand{\mbB}{\mathbf{B}}
\newcommand{\mbE}{\mathbf{E}}
\newcommand{\mbF}{\mathbf{F}}
\newcommand{\mbG}{\mathbf{G}}
\newcommand{\mbK}{\mathbf{K}}
\newcommand{\bj}{{\bf{j}}}
\newcommand{\bk}{{\bf{k}}}
\newcommand{\sse}{\textsf{e}}
\newcommand{\ssec}{\check{\textsf{e}}}
\newcommand{\id}{{\rm id}}
\newcommand{\Tr}{{\rm Tr}}
\newcommand{\g}{\mathfrak{g}}
\newcommand{\U}{\mathcal{U}}
\newcommand{\Bg}{\mathfrak{B}}
\begin{document}

\begin{title}[Duality in the quantum toroidal setting]
{The $(\gl_m,\gl_n)$ duality in the quantum toroidal setting}
\end{title}

\author{B. Feigin, M. Jimbo, and E. Mukhin}
\address{BF: National Research University Higher School of Economics, 
Russian Federation, International Laboratory of Representation Theory 
and \newline Mathematical Physics, Russia, Moscow,  101000,  
Myasnitskaya ul., 20 and Landau Institute for Theoretical Physics,
Russia, Chernogolovka, 142432, pr.Akademika Semenova, 1a
}
\email{bfeigin@gmail.com}
\address{MJ: Department of Mathematics,
Rikkyo University, Toshima-ku, Tokyo 171-8501, Japan}
\email{jimbomm@rikkyo.ac.jp}
\address{EM: Department of Mathematics,
Indiana University-Purdue University-Indianapolis,
402 N.Blackford St., LD 270,
Indianapolis, IN 46202, USA}\email{emukhin@iupui.edu}

\begin{abstract}
On a Fock space constructed from $mn$ free bosons and lattice $\Z^{mn}$, 
we give a level $n$ action of the quantum toroidal algebra $\E_m$ 
associated to $\gl_m$, together with a level $m$ action of the quantum 
toroidal algebra $\E_n$ associated to $\gl_n$.
We prove that the $\E_m$ transfer matrices commute with the $\E_n$ 
transfer matrices after an appropriate identification of parameters. 
\end{abstract}

\date{\today}

\maketitle

\section{Introduction}\label{sec:intro}
Duality of integrable systems is a very interesting, deep, and somewhat mysterious 
phenomenon which is being intensively explored.

\smallskip

The simplest example is the $(\gl_m,\gl_n)$ duality of Gaudin systems. 
Recall that for any reductive Lie algebra $\g$, and any $\g$ weight 
$\bs p\in\mathfrak{h}^*$, there exists a remarkable commutative 
subalgebra $\Bg(\bs p)\subset U(\g[x])$ called the algebra of 
higher Gaudin Hamiltonians. Here $\g[x]=\g\otimes \C[x]$ is 
the current algebra, and $\mathfrak{h}\subset \g$ is the Cartan subalegbra. 
The algebra $\Bg(\bs p)$ can be obtained
using structures of conformal field theory and the so-called 
``center on the critical level'', see \cite{FFR}.

Let $V_k$ be the vector representation of $\gl_k$. Then, by the classical 
construction, we have commuting actions of $\gl_m$ and $\gl_n$ in the spaces 
$S^*(V_m\otimes V_n)$ and $\wedge^* (V_m\otimes V_n)$. These actions can be 
extended to the actions of $\gl_m[x]$ and $\gl_n[x]$ such that 
$V_m\otimes V_n=\oplus_{i=1}^n V_m(u_i)$ as $\gl_m[x]$ module and 
$V_m\otimes V_n=\oplus_{j=1}^m V_n(\uc_j)$ as $\gl_n[x]$ module. 
Here $u_i, \uc_j\in\C$ are arbitrary parameters, and $V_m(u_i)$, 
$V_n(\uc_j)$ are $\gl_m[x]$ and $\gl_n[x]$  evaluation modules, respectively.
The actions of $\gl_m[x]$ and $\gl_n[x]$ do not commute.

The parameters $\{u_i\}$ canonically determine a $\gl_n$ weight 
$\check{\bs p}\in(\mathfrak{h}_n)^*$, while
the parameters $\{\uc_j\}$ canonically determine a $\gl_m$ weight 
$\bs p\in (\mathfrak{h}_m)^*$. It turns out that the two commutative 
subalgebras of higher Gaudin Hamiltonians $\Bg(\bs p)\subset U(\gl_m[x])$ 
and $\Bg(\check{\bs p})\subset U(\gl_n[x])$ commute with each other, see \cite{MTV1}. 
Moreover, these subalgebras coincide and one can write explicitly 
$\gl_m$ higher Gaudin Hamiltonians in terms of 
$\gl_n$ higher Gaudin Hamiltonians and vice versa. 

The spectrum of the Gaudin model is found by the Bethe ansatz method. 
The duality described above gives a correspondence between solutions 
of the Bethe ansatz equations for $\gl_m$ and $\gl_n$ models 
which can also be described via an appropriate Fourier transform related to the bispectral involution for rational solutions of the KP hierarchy, see \cite{MTV2}.  

In \cite{MTV3}, a similar duality is described 
on the level of the Bethe ansatz equations between the
trigonometric $\gl_m$ Gaudin model and the XXX $\gl_n$ model (related to Yangians). 
It is expected that the $(\gl_m,\gl_n)$ duality can be lifted to the duality of the XXZ models
(related to quantum affine algebras). 

\smallskip

Another example, which motivated this paper, is a duality between
local and non-local integrals of motion in the quantum KdV system 
\cite{BLZ1}--\cite{BLZ3}.
Consider the 
free vertex operator algebra generated
by one field $h(z)=\sum_{i\in\Z}h_iz^{-i}$, where
$\{h_i\}$ are generators of the Heisenberg algebra satisfying
$[h_i,h_j]=i\delta_{i,-j}$. 
The local integrals of motion are given by 
integrals of the form $\int T_{2n}(z)\frac{dz}{z}$, 
%$\int T_{2n+1}(z)\,dz/z$, 
$n\in\Z_{\geq 0}$, 
where $T_{2n}(z)$
are local currents. For example $T_2(z)$ is the Virasoro current 
$T_2(z)=\frac{1}{2}:h(z)^2:+\lambda h'(z)$, $\lambda\in\C$,
$T_4(z)=:T_2(z)^2:$, and so on. 

Non-local integrals of motion are multiple integrals of vertex operators.
For example, the first one has the form 
$\int\!\!\int S_1(z)S_2(w)\frac{dz}{z}\frac{dw}{w}$,
where $S_1(z),S_2(w)$ are vertex operators, 
the second non-local integral of motion is a four-fold integral, and so forth. 

In the KdV case local and non-local integrals look very different,
but they mutually commute. 
Understanding the situation with Bethe ansatz and the correspondence 
between the spectra of local and non-local integrals of motion
is still far from complete.

There are many integrable systems of KdV type. Conjecturally,
it is possible to find local and non-local integrals of motion
in almost all $W$ algebras, in the universal enveloping algebras
of affine Kac-Moody Lie algebras (including superalgebras), and in coset vertex algebras.

\smallskip

In the $(\gl_m,\gl_n)$ duality mentioned above, the two sets of integrals in duality are given by 
the same construction.
In contrast, in the case of  vertex operator algebras, the constructions of 
local and non-local integrals of motion are very different.
The situation can be explained if we consider quantum versions. 

In \cite{FKSW}, \cite{KS}, 
``local" and ``non-local" integrals of motion are constructed inside
certain deformed $W$ algebras: %of type $\gl_n$ 
the quantum toroidal $\gl_1$ and $\gl_n$ algebras, see \cite{FJM}. 
Both are given by similar multiple integrals (hence they are both non-local). 
In the conformal limit the deformed 
$W$ algebras turn into
vertex operator algebras, and the two 
sets of integrals of motion become different looking local and non-local 
integrals of motion. Moreover, the same deformed 
$W$ algebras admit alternative conformal 
limits where the situation is reversed: ``local" integrals of motion become 
non-local and ``non-local" ones become local.
\smallskip

In this paper we study a quantum affine 
analog of the $(\gl_m,\gl_n)$ duality. 

Let $\E_k=\E_k(q_1,q_2,q_3)$ be the quantum toroidal algebra of type $\gl_k$, 
where $k\in\Z_{\geq 1}$ and $q_1, q_2,q_3\in\C$ are parameters satisfying 
$q_1q_2q_3=1$. For the definition, see Appendix \ref{sec:Em}. 
The quantum toroidal algebra has a vertical subalgebra isomorphic to 
the quantum affine algebra $U_q\widehat{\gl}_k\subset \E_k$ where $q^2=q_2$. 
Inside a completion  ${\widetilde \E}_k$ of $\E_k$ one has a commutative algebra of transfer 
matrices %$\widehat\B_q(p)\subset \widehat{\E}_k$, 
$\widehat\Bg_q(\bs p)$, 
which we call the Bethe algebra 
of integrals of motion. The Bethe algebra of integrals of motion depends 
on an affine weight $\bs p\in (\mathfrak{ \hat {h}}_k)^*$, see \cite{FJMM} and Subsection \ref{subsec:IM} below.

We consider $mn$ free bosons and a lattice $\Z^{mn}$, where 
$m,n\in\Z_{\geq 1}$.
We use the vertex operator construction of \cite{Sa} to define actions of 
$\E_m(q_1,q_2,q_3)$ of level $n$ and of $\E_n(\qc_1,q_2,\qc_3)$ of level 
$m$ on the total Fock space $\Bbb F_{m,n}$. The actions depend on
parameters $u_1,\dots, u_{n}$ and $\uc_1,\dots, \uc_{m}$, respectively\footnote{In the main text we use notation $u_0=u_n$ and $\uc_0=\uc_m$.}.

Then the actions of vertical subalgebras $U_q\widehat{\gl}_m$ 
and $U_q\widehat{\gl}_n$ commute. The total Fock space $\Bbb F_{m,n}$ 
as a $U_q\widehat{\gl}_m$ module is a direct sum of various products 
of $n$ integrable irreducible level one modules. As $\E_m(q_1,q_2,q_3)$ 
modules those products also become tensor products 
of irreducible modules with spectral parameters, 
see Lemma \ref{lem:levn}. 
Similarly, as a $U_q\widehat{\gl}_n$ module $\Bbb F_{m,n}$ is a direct 
sum of various products of $m$ integrable irreducible level one 
modules which are irreducible $\E_n(\qc_1,q_2,\qc_3)$ modules with spectral parameters. 

Next, we consider Bethe algebras. 
The new feature is that, while 
the parameters $\{\uc_j\}$ determine finite-dimensional part of 
the affine weight $\bs p\in (\mathfrak{\hat h}_m)^*$,
the parameter  $\qc_1$ determines the ``loop direction'' of the latter. 
Similarly  the dual affine weight
$\check{\bs p}\in (\mathfrak{\hat h}_n)^*$ is determined from $\{u_i\}$ and $q_1$.
Then we prove that the corresponding Bethe algebras
$\widehat\Bg_q (\bs p)\subset \widetilde{\E}_m$ and 
$\widehat\Bg_q (\check{\bs p})\subset \widetilde{\E}_n$ mutually commute,  
see Theorem \ref{thm:IM}. 

The action of integrals of motion in $\Bbb F_{m,n}$ is given
by multiple integrals described in \cite{FJM}, see also Proposition \ref{prop:IM} below.
The integrals of motion of \cite{FKSW}, \cite{KS} can be recovered 
in the case of $m=1$.

\smallskip

The spectrum of Bethe algebra of integrals of motion is expected to be given by 
Bethe ansatz method. For the case of $\E_1$ it is proved in \cite{FJMM1}, 
\cite{FJMM2}. For the case of $\E_2$ the precise statement is conjectured 
in \cite{FJM}. Therefore, the duality described in this paper suggests a 
correspondence of solutions of two different Bethe ansatz equations. 
It would be interesting to understand this correspondence. 

The KdV nonlocal and local integrals of motion are expected to be related to coefficients of 
expansions of the same function at zero and infinity respectively, see formulas (55) and (66) in \cite{BLZ1}. 
It is an important problem to find and to prove a similar relation between integrals of motion corresponding 
to $\E_n$ and $\E_m$.

Also it would be interesting to study the various limits of the duality 
described in this paper. 

From the technical point of view, 
our proof of duality is purely computational, 
following the lines of \cite{FKSW},\cite{KS}. 
To give a more conceptual explanation to the duality phenomena
 is an issue left for the future.
\smallskip

The plan of the paper is as follows. 

In Section \ref{sec:prel} we collect preliminary materials concerning
the bosons and the total Fock space $\mathbb{F}_{m,n}$. 
In Section \ref{sec:Fock} we define actions of two
quantum toroidal algebras $\E_m=\E_m(q_1,q_2,q_3)$
and $\check{\E}_n=\E_n(\qc_1.q_2,\qc_3)$ on $\mathbb{F}_{m,n}$,
and state the commutativity of the quantum affine subalgebras 
$U_q\widehat{\gl}_m$ and $U_q\widehat{\gl}_n$ 
(Theorem \ref{affine-com}).
In Section \ref{sec:duality} we introduce integrals of motion associated 
with $\E_m$ and $\check{\E}_n$. We then state the main theorem
concerning their commutativity (Theorem \ref{thm:IM}).

The text is followed by four appendices. 
In Appendix \ref{sec:Em} we give the presentation of the quantum toroidal
algebra $\E_m$. 
In Appendix \ref{sec:contraction}, contractions of various currents
are summarized in tables. 
The commutativity of the quantum affine subalgebras 
$U_q\widehat{\gl}_m$ and $U_q\widehat{\gl}_n$ 
is proved in Appendix \ref{sec:affine-com}. 
Proof of the duality is given in Appendix \ref{sec:cont}. 

\bigskip

\noindent
{\it Notation.}\quad
Throughout the text we fix %positive integers $m,n$, and 
parameters $q,d,\dc\in\C^{\times}$ and define
\begin{align*}
&q_1=q^{-1}d\,,\ q_2=q^2\,,\ q_3=q^{-1}d^{-1}\,,\\ 
&\check{q}_1=q^{-1}\check{d}\,,\ \check{q}_2=q^2\,,\
\check{q}_3=q^{-1}\check{d}^{-1}\,, 
\end{align*}
so that $q_2=\qc_2$, 
$q_1q_2q_3=\check{q}_1\check{q}_2\check{q}_3=1$. 
We assume that $(q_1,q_2,q_3)$ are generic, in the sense that if   
$q_1^iq_2^jq_3^k=1$ for $i,j,k\in\Z$, then $i=j=k$. 
We assume the same for $(\qc_1,\qc_2,\qc_3)$.

We use symbols for ordered product
\[
\prod_{1\le i\le N}^{\curvearrowright}A_i=A_1A_2\cdots A_N\,,
\quad \prod_{1\le i\le N}^{\curvearrowleft}A_i=A_NA_{N-1}\cdots A_1\,
\]
and infinite products
\begin{align*}
&(z_1,\ldots,z_r;p)_\infty=\prod_{i=1}^r\prod_{k=0}^\infty(1-z_i p^k)\,,
\quad 
\Theta_p(z)=(z,pz^{-1},p;p)_\infty\,.
\end{align*}

We set $\theta(P)=1$ if statement $P$ is true and $\theta(P)=0$ otherwise. 
For a positive integer $N$, we write
$\delta^{(N)}_{i,k}=\theta\bigl(i\equiv k\bmod N\bigr)$.
\medskip

\section{Preliminaries}\label{sec:prel}
In this section we collect preliminary materials which will be used
to construct representations of algebras $\E_m$ and $\check{\E}_n$
in Section \ref{sec:Fock}.

\subsection{Total Fock space}\label{sec:totalFock}
Let $m,n$ be positive integers. 
We introduce a set of bosons
$\{a^{i,j}_{r}\mid r\in\Z\setminus\{0\},\ 0\le i\le m-1,\ 0\le j\le n-1\}$
such that 
\begin{align}
[a^{i,j}_r,a^{k,l}_{s}]=\delta_{i,k}
\delta_{j,l}\delta_{r+s,0}\frac{[r]^2}{r}\,, 
\label{aa}
\end{align}
where $[x]=(q^x-q^{-x})/(q-q^{-1})$.

Consider a vector space with basis $\{\ket{\bm}\}$ labeled by 
$\bm=(\bm_{s,t})_{0\le s\le m-1\atop 0\le t\le n-1}\in\Z^{mn}$. 
We define linear operators $e^{\pm\e_{i,j}}$, $\partial_{i,j}$ by setting
\begin{align*}
&e^{\pm\e_{i,j}}\ket{\bm}=(-1)^{\sum_{s=0}^{i-1}\sum_{t=0}^{n-1}\bm_{s,t}
+\sum_{t=0}^{j-1}\bm_{i,t}}
\ket{\bm\pm\one_{i,j}}\,,\\
&\partial_{i,j}\ket{\bm}=\bm_{i,j}\ket{\bm}\,,
\end{align*}
where $\one_{i,j}=(\delta_{is}\delta_{jt})$. 
We have then 
\begin{align*}
&e^{\e_{i,j}} e^{\e_{k,l}} =-e^{\e_{k,l}} e^{\e_{i,j}}\quad 
\text{if $(i,j)\neq(k,l)$},
\\
&\ket{\bm}=
\prod_{0\le i\le m-1}^{\curvearrowright}\prod_{0\le j\le n-1}^{\curvearrowright}
e^{\bm_{i,j}\e_{i,j}}\ket{\mathbf{0}}
=e^{\bm_{0,0}\e_{0,0}}\cdots e^{\bm_{m-1,n-1}\e_{m-1,n-1}}\ket{\mathbf{0}}
\,.
\end{align*}

Define the total Fock space  by
\begin{align*}
\mathbb{F}_{m,n}=\C[\{a^{i,j}_{-r}\}_{r>0, 0\le i\le m-1\atop 0\le j\le n-1}] \otimes 
\Bigl(\bigoplus_{\bm\in\Z^{mn}} \C \ket{\bm}
\Bigr)\,.
\end{align*}
We write the operators $a^{i,j}_{\pm r}\otimes \id$, 
$\id\otimes e^{\e_{i,j}}$, $\id\otimes \partial_{i,j}$ on
$\mathbb{F}_{m,n}$
simply as $a^{i,j}_{\pm r}$, $e^{\e_{i,j}}$, $\partial_{i,j}$. 
We extend the range of the superfixes $i,j$ to $\Z$
by demanding periodicity $a^{i,j}_r=a^{i+m,j}_r=a^{i,j+n}_r$, 
and likewise for $e^{\e_{i,j}}$, $\partial_{i,j}$. 

The space $\mathbb{F}_{m,n}$ carries a $\Z$
grading with the degree assignment
\[
\deg (a^{i_1,j_1}_{-r_1}\cdots a^{i_N,j_N}_{-r_N}\ket{\bm})=
\sum_{l=1}^Nr_l+\frac{1}{2}\sum_{s,t}\bm_{s,t}^2\,. 
\]

For a polynomial $X$ in the oscillators $a^{i,j}_r$ ($r\neq0$) and 
the `zero mode' operators $e^{\e_{i,j}}$, $\partial_{i,j}$,  
we shall use the normal ordering symbol :$X$:. 
The rule is as usual; $a^{i,j}_r$ with $r>0$ (resp. $r<0$)
are placed to the right (resp. left), and 
 $\partial_{i,j}$ are placed to the right of $e^{\e_{k,l}}$. 
The ordering of the $e^{\e_{k,l}}$'s is kept unchanged, so that
$:e^{\e_{k_1,l_1}}\cdots e^{\e_{k_r,l_r}}:=
e^{\e_{k_1,l_1}}\cdots e^{\e_{k_r,l_r}}$. 

\medskip

\subsection{Bosons $b^{i,j}_r$}\label{boson-b}
We introduce auxiliary bosons $b^{i,j}_{\pm r}$, $\bc^{i,j}_{\pm r}$ 
by setting 
\begin{align}
&b^{i,j}_r=q^r(q_3^{r}a^{i-1,j}_r-a^{i,j}_r)\,,
\quad
b^{i,j}_{-r}=q_1^{r}a^{i-1,j}_{-r}-a^{i,j}_{-r} \,,
\label{def-b1}\\
&\bc^{i,j}_r=-q^r(\qc_3^{\,r}a^{i,j-1}_r-a^{i,j}_r)\,,
\quad
\bc^{i,j}_{-r}
=-(\qc_1^{\,r}a^{i,j-1}_{-r}-a^{i,j}_{-r})\,,
\label{def-b2}
\end{align}
for $r>0$.  
They are subject to linear relations
\begin{align}
&b^{i,j}_{r}+\bc^{i,j}_{r}
=
\qc_3^{\,r} b^{i,j-1}_{r}+q_3^r \bc^{i-1,j}_{r}
\,,
\label{bbc-rel2}\\
&b^{i,j}_{-r}+\bc^{i,j}_{- r}
=\qc_1^{\,r}b^{i,j-1}_{-r}+q_1^r\bc^{i-1,j}_{-r}\,.
\label{bbc-rel1}
\end{align}

Commutation relations of these bosons are obtained from \eqref{aa}.  
\begin{lem}
The bosons \eqref{def-b1}--\eqref{def-b2}
satisfy the commutation relations
\begin{align}
&[b^{i,j}_r,b^{k,l}_{-r}]=\frac{[r]^2}{r}
q^r\bigl((1+q_2^{-r})\delta^{(m)}_{i,k}-q_1^r\delta^{(m)}_{i+1,k}
-q_3^r\delta^{(m)}_{i-1,k}\bigr)\delta^{(n)}_{j,l} \,,
\label{bb1}\\
&[\bc^{i,j}_r,\bc^{k,l}_{-r}]=\frac{[r]^2}{r}q^r
\delta^{(m)}_{i,k}
\bigl((1+q_2^{-r})\delta^{(n)}_{j,l}-\qc_1^r\delta^{(n)}_{j,l-1}
-\qc_3^{r}\delta^{(n)}_{j,l+1}\bigr) \,,
\label{bb2}\\
&[b^{i,j}_r,\bc^{k,l}_{-r}]=-\frac{[r]^2}{r}q^{r}
\bigl(q_3^r\delta^{(m)}_{i-1,k}-\delta^{(m)}_{i,k}\bigr)
\bigl(\qc_1^r\delta^{(n)}_{j,l-1}-\delta^{(n)}_{j,l}\bigr)\,,
\label{bb3}\\
&[\bc^{k,l}_r,b^{i,j}_{-r}]=-\frac{[r]^2}{r}q^{r}
\bigl(q_1^r\delta^{(m)}_{i-1,k}-\delta^{(m)}_{i,k}\bigr)
\bigl(\qc_3^r\delta^{(n)}_{j,l-1}-\delta^{(n)}_{j,l}\bigr)\,.
\label{bb4}
\end{align}
\qed
\end{lem}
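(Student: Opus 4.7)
The plan is a direct computation: expand each commutator using the definitions \eqref{def-b1}--\eqref{def-b2}, which are linear in the $a$-bosons, and apply the basic relation \eqref{aa}. Once the periodicity convention is taken into account, \eqref{aa} reads
\begin{align*}
[a^{i,j}_r,a^{k,l}_{-r}]=\frac{[r]^2}{r}\,\delta^{(m)}_{i,k}\delta^{(n)}_{j,l}\,,
\end{align*}
so each of the four commutators in the lemma reduces to a sum of at most four Kronecker terms. The relation $q_1q_2q_3=1$ (and its $\qc$-analog), giving $q_1q_3=q_2^{-1}$, is what produces the factor $1+q_2^{-r}$ on the diagonal.

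Concretely, for \eqref{bb1} I would write
\begin{align*}
[b^{i,j}_r,b^{k,l}_{-r}]
= q^r\bigl[q_3^r a^{i-1,j}_r-a^{i,j}_r,\ q_1^r a^{k-1,l}_{-r}-a^{k,l}_{-r}\bigr]\,,
\end{align*}
expand the bracket into four $a$--$a$ commutators, and substitute the CCR above. The two "diagonal" terms produce $q_3^r q_1^r\delta^{(m)}_{i-1,k-1}+\delta^{(m)}_{i,k}=(q_2^{-r}+1)\delta^{(m)}_{i,k}$, while the two "off-diagonal" terms give $-q_3^r\delta^{(m)}_{i-1,k}-q_1^r\delta^{(m)}_{i,k-1}$; the last delta equals $\delta^{(m)}_{i+1,k}$, yielding exactly the claimed right-hand side. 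The identity \eqref{bb2} is entirely parallel, with the shift happening in the $j$-direction and with $(q_1,q_3)$ replaced by $(\qc_3,\qc_1)$ (the sign flips in the definition of $\bc^{i,j}_{\pm r}$ cancel in the bilinear commutator).

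For the mixed commutators \eqref{bb3}--\eqref{bb4}, the same expansion is carried out but now the $b$-boson shifts the $i$-index while the $\bc$-boson shifts the $j$-index, so the four Kronecker contributions factor into a product of an $m$-periodic delta and an $n$-periodic delta with no coincidence that would generate a $1+q_2^{-r}$ term. The factorized shape of the answer in \eqref{bb3}--\eqref{bb4} is then immediate.

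There is no genuine obstacle: the proof is pure bookkeeping. The only points requiring care are (i) keeping track of the sign convention in \eqref{def-b2}, (ii) converting shifted Kronecker deltas $\delta^{(m)}_{i,k-1}$ to $\delta^{(m)}_{i+1,k}$ under the periodic convention, and (iii) using $q_1q_3=q_2^{-1}$ at exactly the right place. A useful sanity check is that the cross term $1+q_2^{-r}$ must appear in \eqref{bb1} and \eqref{bb2} but not in \eqref{bb3}--\eqref{bb4}, because it arises only when both bosons shift the same coordinate.
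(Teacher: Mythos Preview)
Your proposal is correct and is exactly the approach the paper intends: the lemma is stated with a bare \qed, the preceding sentence saying only that the relations ``are obtained from \eqref{aa}'', and your direct expansion using the periodic Kronecker form of \eqref{aa} together with $q_1q_3=q_2^{-1}$ is precisely that computation spelled out.
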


We shall also use 
currents $A^{i,j}(z)$, $B^{i,j}(z)$, $\Ac^{i,j}(z)$, $\Bc^{i,j}(z)$.
These are  formal series of the form  $X(z)=\sum_{r\neq0}X_rz^{-r}$, 
whose Fourier coefficients
$X_r$ are given in terms of $b^{i,j}_{\pm r}, \bc^{i,j}_{\pm r}$ as follows.  
\begin{align}
&A^{i,j}_r=-\frac{1}{[r]}q^{-(n-1)r}\qc_3^{-jr}b^{i,j}_r\,,
\quad 
A^{i,j}_{-r}= \frac{1}{[r]}q^{(n-2)r}
\Bigl(\qc_3^{jr}b^{i,j}_{-r}
+(1-q_2^r)\sum_{t=j+1}^{n-1}\qc_3^{tr}b^{i,t}_{-r}\Bigr)\,,
\label{Acur}\\
&B^{i,j}_{r}=\frac{1}{[r]}q^r
\Bigl(\qc_1^{jr}b^{i,j}_{r}
+(1-q_2^{-r})\sum_{t=0}^{j-1}\qc_1^{tr}b^{i,t}_r\Bigr) \,,
\quad 
B^{i,j}_{-r}=-\frac{1}{[r]}\qc_1^{-jr}b^{i,j}_{-r} \,,
\label{Bcur}\\
&\Ac^{i,j}_r=-\frac{1}{[r]}q^{-(m-1)r}
q_3^{-ir}\bc^{i,j}_r\,,
\quad 
\Ac^{i,j}_{-r}= \frac{1}{[r]}
q^{(m-2)r}
\Bigl(
q_3^{ir}\bc^{i,j}_{-r}
+(1-q_2^{r})\sum_{s=i+1}^{m-1}q_3^{sr}\bc^{s,j}_{-r}
\Bigr)\,,
\label{Accur}\\
&\Bc^{i,j}_{r}=
\frac{1}{[r]}q^r
\Bigl(q_1^{ir}\bc^{i,j}_{r}
+(1-q_2^{-r})\sum_{s=0}^{i-1}q_1^{sr}\bc^{s,j}_r 
\Bigr)\,,
\quad 
\Bc^{i,j}_{-r}=-\frac{1}{[r]}q_1^{-ir}\bc^{i,j}_{-r} \,.
\label{Bccur}
\end{align}
Here $0\le i\le m-1$, $0\le j\le n-1$ and $r>0$.  

\section{Fock representation}\label{sec:Fock}

We consider two quantum toroidal algebras in parallel:
algebra $\E_m=\E_m(q_1,q_2,q_3)$ with parameters $q_1,q_2,q_3$,
and algebra $\check{\E}_n=\check{\E}_n(\qc_1,q_2,\qc_3)$ 
with parameters $\qc_1,q_2,\qc_3$.  
Our convention about quantum toroidal algebras 
is summarized in Appendix \ref{sec:Em}. 
We say that an $\E_m$ module has level $n$ if the central element 
$C$ acts as a scalar $q^n$. 
In this section, 
we introduce a level $n$ action of $\E_m$ 
and a level $m$ action of $\check{\E}_n$
on the same total Fock space $\mathbb{F}_{m,n}$;  
see Propositions \ref{repEm} and \ref{repEn} below.

\subsection{Level one representations}\label{sec:level1}

We begin with the case $n=1$, considering a level one action of 
$\E_m$ on $\mathbb{F}_{m,1}$. 
Apart from minor modification, 
the following result is due to \cite{Sa} (see also \cite{STU}).

\begin{prop}
Let $u\in \C^\times$. 
The following formulas give a level one representation of $\E_m$ on $\mathbb{F}_{m,1}$:
\begin{align*}
&q^{\ve_i}=q^{\partial_{i,0}}\,,\quad C=q\,,\quad D=q^{\deg}\,,
\quad
H_{i,r}=b^{i,0}_r\,,\quad H_{i,-r}=b^{i,0}_{-r}\,,
\\
&E_i(z)=u^{-\delta_{i,0}}:e^{A^{i,0}(z)}:U^{i,0}(z)\,,
\quad
F_i(z)=u^{\delta_{i,0}}:e^{B^{i,0}(z)}:V^{i,0}(z)\,.
\end{align*}
Here $0\le i\le m-1$, $r>0$, 
the bosons $b^{i,0}_{\pm r}$ and the currents
$A^{i,0}(z),B^{i,0}(z)$ %in \eqref{Acur}, \eqref{Bcur} 
are those with $n=1$, and 
\begin{align*}
&U^{i,0}(z)=e^{-\e_{i,0}}e^{\e_{i-1,0}}
z^{\partial_{i-1,0}-\partial_{i,0}+1}
d^{(\partial_{i-1,0}+\partial_{i,0})/2}\,,
\nn\\
&V^{i,0}(z)=
e^{-\e_{i-1,0}}e^{\e_{i,0}}
z^{-\partial_{i-1,0}+\partial_{i,0}+1}
d^{-(\partial_{i-1,0}+\partial_{i,0})/2}
\,.\nn
\end{align*}
\qed
\end{prop}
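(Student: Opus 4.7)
The plan is to verify each relation in the presentation of $\E_m$ (Appendix \ref{sec:Em}) by a direct vertex-operator calculation on $\mathbb{F}_{m,1}$, using Wick contractions of the Heisenberg bosons.

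First I would tabulate the basic OPEs. Specialising \eqref{bb1} to $j=l=0$ and inserting the definitions \eqref{Acur}--\eqref{Bcur} at $n=1$ (where the auxiliary tail sums in \eqref{Acur} and \eqref{Bcur} are empty), one computes the three scalar contractions $[A^{i,0}_+(z),A^{j,0}_-(w)]$, $[A^{i,0}_+(z),B^{j,0}_-(w)]$ and $[B^{i,0}_+(z),B^{j,0}_-(w)]$ as explicit rational expansions in $w/z$. The standard vertex-operator identity $:e^{X(z)}::e^{Y(w)}:\,=\,e^{[X_+,Y_-]}:e^{X(z)+Y(w)}:$ then reduces every two-current product $E_i(z)E_j(w)$, $F_i(z)F_j(w)$, $E_i(z)F_j(w)$ to a scalar prefactor times a normal-ordered vertex operator.

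Next I would check the relations in groups. First, the Heisenberg relations among $H_{i,r}$, $q^{\ve_i}$, $C$ and $D$ follow directly from \eqref{bb1} with $j=l=0$: the prescription $C=q$ yields the correct level factor, and $D=q^{\deg}$ implements the grading. Second, the $HE$ and $HF$ relations reduce to computing $[b^{i,0}_r, A^{j,0}(z)]$ and $[b^{i,0}_r, B^{j,0}(z)]$ together with the trivial $z$-weight shifts supplied by $U^{j,0}(z)$ and $V^{j,0}(z)$. Third, the quadratic $EE$, $FF$ and $EF$ relations amount to matching the scalar contraction prefactors to the structure functions of $\E_m$; the zero-mode cocycle encoded in $e^{\pm\e_{i,0}}$, together with the $z$- and $d$-exponentials in $U^{i,0}$ and $V^{i,0}$, supplies the correct signs and rescalings. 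For $[E_i(z),F_j(w)]$ the spectral parameter $u$ cancels precisely when $i=j$, and the required delta-function identities come from simple poles at $w/z=q^{\pm 1}$, which reassemble on the right-hand side into the Cartan currents $\psi^\pm_i$.

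The Serre relations between $E_i$ and $E_{i\pm 1}$ (and their $F$-counterparts) form the main obstacle. After the scalar prefactors are pulled out, they reduce to a symmetrisation identity for a rational function of three variables: one verifies that the symmetrised sum has a zero of the required order at the collision points and that no spurious residues come in through the zero-mode factors. The genericity hypothesis on $(q_1,q_2,q_3)$ rules out accidental cancellations, and the resulting identity is the standard one underlying the constructions of \cite{Sa, STU}. The only deviation from that construction is the insertion of $u^{\pm\delta_{i,0}}$ in $E_i(z)$ and $F_i(z)$, which affects no contraction and therefore preserves every relation.
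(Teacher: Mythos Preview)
Your outline is correct and is precisely the standard vertex-operator verification carried out in \cite{Sa,STU}. Note, however, that the paper itself does not supply a proof of this proposition: it is stated with a citation (``Apart from minor modification, the following result is due to \cite{Sa}'') and closed with a \qed, the only novelty being the insertion of the evaluation parameter $u^{\pm\delta_{i,0}}$, which, as you observe, affects no contraction. So there is no separate ``paper's proof'' to compare against; your approach simply reproduces the argument of the cited references. One small point of notation: the Cartan currents are denoted $K^\pm_i(z)$ here, not $\psi^\pm_i$.
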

We write the level one $\E_m$ module given above as $\mathbb{F}_{m,1}(u)$.
\smallskip

For $u\in\C^\times$, $\nu\in\Z/m\Z$ 
and $t,s\in\Z$, let
$\mathcal{F}_m^{(\nu)}(u;t,s)$ denote the level one irreducible 
$\E_m$ module such that 
(i) it has highest weight $(P_0(z),\ldots,P_{m-1}(z))$ 
where 
\begin{align*}
P_i(z)=1 \quad (i\neq\nu),\quad
P_\nu(z)=q \frac{1-q_2^{-1}u/z}{1-u/z}\,,
\end{align*}
(see Appendix \ref{sec:Em} for the definition of highest weight), 
(ii) the highest weight vector has degree $t$, 
and (iii) the central element 
 $q^{\sum_{i=0}^{m-1}\varepsilon_i}$ acts as a scalar $q^s$. 

\begin{lem}\label{lem:lev1}
We have a decomposition into submodules
\begin{align*}
&\mathbb{F}_{m,1}(u)=
\bigoplus_{s\in\Z}\mathcal{F}_m^{(\nu)}(u^{(s)};t^{(s)},s)\,,
\end{align*}
where $s=ml+\nu$ ($l\in\Z$, $0\le\nu\le m-1$), and
\begin{align*}
&u^{(s)}
=(-)^m d^{-s-m/2} qu\,,  \quad
t^{(s)}
=\frac{\nu}{2}(l+1)^2+\frac{m-\nu}{2}l^2\,.
\end{align*}
The submodule $\mathcal{F}_m^{(\nu)}(u^{(s)};t^{(s)},s)$ 
is generated by a highest weight vector
\[
v^{(s)} 
=\ket{\overbrace{l+1,\ldots,l+1}^{\nu},l,\ldots,l} \,.
\]
\qed
\end{lem}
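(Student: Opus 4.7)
The plan is to exploit the action of the central operator $q^{\sum_i \ve_i}$, decompose $\mathbb{F}_{m,1}(u)$ into its eigenspaces, identify $v^{(s)}$ as the highest weight vector inside each eigenspace, and then close the argument with a character comparison.

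\smallskip
\noindent\textbf{Step 1 (splitting into sectors).}
First I would check that each generator $E_i(z), F_i(z)$ preserves the total charge $\sum_{i=0}^{m-1}\partial_{i,0}$: the factor $e^{-\e_{i,0}}e^{\e_{i-1,0}}$ in $U^{i,0}(z)$ (and its transpose in $V^{i,0}(z)$) shifts exactly one $\ve_i$ up and one down. Hence the operator $q^{\sum_i\ve_i}$ commutes with $\E_m$, and
\[
\mathbb{F}_{m,1}(u)=\bigoplus_{s\in\Z}\mathbb{F}_{m,1}(u)^{[s]},\qquad
\mathbb{F}_{m,1}(u)^{[s]}=\bigl\{v\mid q^{\sum_i\ve_i}v=q^sv\bigr\}
\]
is a decomposition into $\E_m$-submodules on which the central element acts by $q^s$ as required in condition (iii).

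\smallskip
\noindent\textbf{Step 2 (identifying the highest weight vector).}
Inside $\mathbb{F}_{m,1}(u)^{[s]}$, the pure lattice vectors $\ket{\bm}$ occupy the bottom of the degree grading. For $s=ml+\nu$, a short extremal argument shows that $\sum_i\bm_{i,0}^2$ is minimized among $\bm$ with $\sum_i\bm_{i,0}=s$ precisely when the entries take only the values $l,\,l+1$; the ordering of Saito's vertex operators fixes the cyclic arrangement in which the $l+1$'s occupy the initial block, giving the stated $v^{(s)}$. The degree formula $\deg v^{(s)}=\tfrac12\sum_i\bm_{i,0}^2=\tfrac{\nu}{2}(l+1)^2+\tfrac{m-\nu}{2}l^2=t^{(s)}$ is then immediate.

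\smallskip
\noindent\textbf{Step 3 (verifying the highest weight).}
Next I would compute the action of the Cartan and positive currents on $v^{(s)}$. For $r\neq 0$, the mode $H_{i,r}=b^{i,0}_r$ is a linear combination of oscillators $a^{\bullet,0}_r$, all of which annihilate the pure lattice vector $v^{(s)}$, so $\psi^\pm_i(z)v^{(s)}$ is a scalar multiple of $v^{(s)}$ and the full eigenvalue comes from the zero-mode operator $q^{\ve_i}$. Expanding the vertex operator shows that $E_i(z)v^{(s)}$ is a formal power series in non-negative powers of $z$ — the crucial point is that the exponent $\bm_{i-1,0}-\bm_{i,0}+1$ of $z$ arising from $U^{i,0}(z)$ is $\ge0$ for every $i$, with strict inequality only at $i=\nu$ (value $2$) and equality at $i=0$ when $\nu\ge1$ (value $0$). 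Together with the rational-function identification between $\psi^+_i(z)|_{z=\infty}$ and $\psi^-_i(z)|_{z=0}$ eigenvalues, this pins down the highest weight as $P_i(z)=1$ for $i\ne\nu$ and $P_\nu(z)=q(1-q_2^{-1}u^{(s)}/z)/(1-u^{(s)}/z)$, where the spectral parameter $u^{(s)}$ is extracted from the prefactor $u^{-\delta_{i,0}}$ together with the $z$- and $d$-shifts; specifically, the factor $d^{(\partial_{i-1,0}+\partial_{i,0})/2}$ evaluated on $v^{(s)}$ yields $d^{-s-m/2}$ after combining all contributions, and the cocycle $e^{\pm\e_{i,0}}$ produces the sign $(-)^m$ each time the lattice shift wraps around, giving the stated $u^{(s)}=(-)^m d^{-s-m/2}qu$.

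\smallskip
\noindent\textbf{Step 4 (exhaustion).}
Finally, the $\E_m$-submodule generated by $v^{(s)}$ is irreducible (Fock-type level one modules are irreducible for generic parameters), hence isomorphic to $\mathcal{F}_m^{(\nu)}(u^{(s)};t^{(s)},s)$. To see that the sum of these submodules exhausts $\mathbb{F}_{m,1}(u)$ I would compare the bigraded character (with respect to degree and $s$): the character of $\mathbb{F}_{m,1}$ factors as a single-boson partition function times a $\Z^m$-lattice theta function, and this matches exactly $\sum_{s\in\Z}\on{ch}\mathcal{F}_m^{(s\bmod m)}(u^{(s)};t^{(s)},s)$ by the standard level one branching for $\widehat{\gl}_m$ (Frenkel--Kac style). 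The main technical obstacle is the computation in Step 3: carefully tracking the powers of $z$, $d$, and the cocycle signs that accumulate from the zero-mode part of the vertex operators in order to pin down the shift $u\mapsto u^{(s)}$ exactly as claimed.
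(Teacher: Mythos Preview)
Your Steps 1 and 4 are reasonable, but Step 3 has a genuine gap: you are verifying the wrong highest-weight condition. In this paper the highest weight $(P_0(z),\ldots,P_{m-1}(z))$ is defined via Miki-transformed generators --- one requires $\theta^{-1}(E_i(z))w=0$ and $\theta^{-1}(K^\pm_i(z))w=P_i(z)w$ (see Appendix~\ref{sec:Em}) --- and these are \emph{not} the currents $E_i(z),K^\pm_i(z)$ in which the Fock representation is presented. Your claim that ``$\psi^\pm_i(z)v^{(s)}$ is a scalar multiple of $v^{(s)}$'' is already false for the original generators: only the annihilation modes $H_{i,r}$ with $r>0$ kill the lattice vector, whereas $H_{i,-r}=b^{i,0}_{-r}$ are creation operators, so $K^-_i(z)v^{(s)}$ is not proportional to $v^{(s)}$. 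And even the constant eigenvalue of $K^+_i(z)$ cannot be the rational function $P_\nu(z)=q(1-q_2^{-1}u^{(s)}/z)/(1-u^{(s)}/z)$ you want. Your power-counting for $E_i(z)v^{(s)}$ is correct as a computation but says nothing about $\theta^{-1}(E_i(z))v^{(s)}=0$.

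The paper states the lemma without proof, but signals the intended method at the end of Appendix~\ref{sec:Em}: the explicit formulas writing $\theta^{-1}(H_{i,1})$ and $\theta^{-1}(H_{0,1})$ as iterated $q$-commutators of the zero modes $F_{j,0}$ are ``used to calculate highest weights of level one modules''. The actual work is to evaluate those commutators on $v^{(s)}$; it is there that the zero-mode factors $d^{(\partial_{i-1,0}+\partial_{i,0})/2}$, the cocycle signs, and the prefactor $u^{\pm\delta_{i,0}}$ combine to produce $u^{(s)}=(-)^m d^{-s-m/2}qu$. A secondary issue in Step~2: minimizing $\sum_i\bm_{i,0}^2$ subject to $\sum_i\bm_{i,0}=s$ does not single out a unique vector --- all $\binom{m}{\nu}$ arrangements of $\nu$ copies of $l+1$ and $m-\nu$ copies of $l$ have the same degree --- so ``Saito's ordering'' cannot pick out $v^{(s)}$; it is the horizontal highest-weight condition that does.
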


\subsection{Higher level representations}\label{sec:levelmn}
For general $n\ge1$, we identify the vector space $\mathbb{F}_{m,n}$ 
with the tensor product 
\[
\mathbb{F}_{m,1}(u_0)\otimes\cdots\otimes\mathbb{F}_{m,1}(u_{n-1}) \,.
\]
We give it an $\E_m$ module structure 
via the iterated coproduct
$\Delta^{(n-1)}$ (see Appendix \ref{sec:Em} for the definition of $\Delta$).
For convenience we further gauge transform the action of $x\in\E_m$ as 
\begin{align}
&\Delta^{(n-1)} x \to d^Z(\Delta^{(n-1)} x)d^{-Z}, 
\quad Z=-\frac{1}{2}\sum_{s=0}^{m-1}\sum_{0\le t\neq t'\le n-1}
\Bigl(s+\frac{1}{2}\Bigr)\partial_{s,t}\partial_{s,t'}\,.
\label{conjug}
\end{align}
We set further
\begin{align*}
\textsf{e}_i=\sum_{t=0}^{n-1}\partial_{i,t}\,,
\quad
\check{\textsf{e}}_j=-\sum_{s=0}^{m-1}\partial_{s,j}\,.
\end{align*}

Identifying 
$1\otimes\cdots\otimes b^{i,0}_{\pm r}\otimes\cdots\otimes1$
with $d^{\pm jr}b^{i,j}_{\pm r}$,
we obtain formulas for the action of generators of $\E_m$, 
which we summarize below. 

\begin{prop}\label{repEm}
Let $m\ge2$, $n\ge1$, 
$u_0,\ldots,u_{n-1}\in\C^\times$. 
The following formulas give a level $n$ action of $\E_m$  on 
$\mathbb{F}_{m,n}$:
\begin{align}
&q^{\ve_i}=q^{\textsf{e}_i}
\,,
\quad C=q^n\,,\quad
D=q^{\deg}\,,
\label{Fock0}
\\
&H_{i,r}=\sum_{j=0}^{n-1}\qc_1^{jr}b^{i,j}_r\,,
\quad
H_{i,-r}=\sum_{j=0}^{n-1}q^{(n-1)r}\qc_3^{jr}b^{i,j}_{-r}\,,
\label{Fock1}\\
&E_i(z)=\sum_{j=0}^{n-1} u_{j}^{-\delta_{i,0}}E^{i,j}(z)\,,
\quad E^{i,j}(z)=:e^{A^{i,j}(z)}:U^{i,j}(z)\,,
\label{Fock2}\\
&F_i(z)=\sum_{j=0}^{n-1}u_{j}^{\delta_{i,0}}F^{i,j}(z)\,,
\quad F^{i,j}(z)=:e^{B^{i,j}(z)}:V^{i,j}(z)\,.
\label{Fock3}
\end{align}
where $0\le i\le m-1$, $0\le l\le n-1$ and $r>0$.
We set for $1\le i\le m-1$
\begin{align}
&U^{i,j}(z)=e^{-\e_{i,j}}e^{\e_{i-1,j}}
(q^{n-1-j}z)^{\partial_{i-1,j}-\partial_{i,j}+1}
\label{Fock4}\\
&\quad \times
d^{(1/2-i)\textsf{e}_{i-1}+(1/2+i)\textsf{e}_i+i(\partial_{i-1,j}-\partial_{i,j})}
q^{-\sum_{t=j+1}^{n-1}(\partial_{i-1,t}-\partial_{i,t})}\,,
\nn\\
&V^{i,j}(z)=
e^{-\e_{i-1,j}}e^{\e_{i,j}}
(q^{j}z)^{-\partial_{i-1,j}+\partial_{i,j}+1}
\label{Fock5}
\\
&\quad\times
d^{-(1/2-i)\textsf{e}_{i-1}-(1/2+i)\textsf{e}_i-i(\partial_{i-1,j}-\partial_{i,j})}
q^{\sum_{t=0}^{j-1}(\partial_{i-1,t}-\partial_{i,t})}
\,,\nn
\end{align}
and for $i=0$ 
\begin{align}
&U^{0,j}(z)=e^{-\e_{0,j}}e^{\e_{m-1,j}}
(q^{n-1-j}z)^{\partial_{m-1,j}-\partial_{0,j}+1}
\label{Fock6}\\
&\quad \times
d^{(1/2-m)\textsf{e}_{m-1}+(1/2)\textsf{e}_0+m\partial_{m-1,j}}
\times q^{-\sum_{t=j+1}^{n-1}(\partial_{m-1,t}-\partial_{0,t})
}
\nn\\
&V^{0,j}(z)=e^{-\e_{m-1,j}}e^{\e_{0,j}}
(q^{j}z)^{-\partial_{m-1,j}+\partial_{0,j}+1}
\label{Fock7}
\\
&\quad\times
d^{-(1/2-m)\textsf{e}_{m-1}-(1/2)\textsf{e}_0-m\partial_{m-1,j}}
\times q^{\sum_{t=0}^{j-1}(\partial_{m-1,t}-\partial_{0,t})
}
\,.\nn
\end{align}
\qed
\end{prop}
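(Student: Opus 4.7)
The plan is to build the representation by iterating the coproduct of $\E_m$ on $n$ copies of the level one representation $\mathbb{F}_{m,1}(u_j)$, then apply the inner automorphism by $d^Z$ of \eqref{conjug}, and finally rewrite everything in terms of the global oscillators $b^{i,j}_{\pm r}$ and zero modes $e^{\e_{i,j}},\partial_{i,j}$ on $\mathbb{F}_{m,n}$. Since the coproduct $\Delta$ is an algebra homomorphism and conjugation by $d^Z$ is an automorphism of $\End\,\mathbb{F}_{m,n}$, the resulting operators automatically satisfy all the defining relations of $\E_m$; the content of the proposition is therefore the identification of the formulas \eqref{Fock0}--\eqref{Fock7} with the outcome of this construction.

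First I would treat the Cartan-type data. Since each factor $\mathbb{F}_{m,1}(u_j)$ has $C=q$, the $n$-fold tensor action has $C=q^n$; the coproduct $\Delta(q^{\ve_i})=q^{\ve_i}\otimes q^{\ve_i}$ gives the exponent $\textsf{e}_i=\sum_j\partial_{i,j}$; and $D=q^{\deg}$ is unaffected by $d^Z$ because $Z$ commutes with $\deg$. For the Heisenberg currents $H_{i,\pm r}$, the formula for $\Delta$ in Appendix \ref{sec:Em} gives $\Delta(H_{i,r})=H_{i,r}\otimes 1+C^{?}\otimes H_{i,r}$ with known central shift. Iterating and using the identification $1^{\otimes j}\otimes b^{i,0}_{\pm r}\otimes 1^{\otimes n-1-j}=d^{\pm jr}b^{i,j}_{\pm r}$, together with $\qc_1=q^{-1}\dc$ and $\qc_3=q^{-1}\dc^{-1}$, yields the coefficients $\qc_1^{jr}$ and $q^{(n-1)r}\qc_3^{jr}$ in \eqref{Fock1}; the conjugation by $d^Z$ has no effect on $b^{i,j}_{\pm r}$ and therefore leaves this formula unchanged.

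The main computational work is in rewriting $\Delta^{(n-1)}(E_i(z))$ and $\Delta^{(n-1)}(F_i(z))$. I would proceed in two steps. First, using the level one formulas and the coproduct for $E_i(z)$, I expand the telescoping sum over the tensor factors into a sum $\sum_{j=0}^{n-1}u_j^{-\delta_{i,0}}E^{i,j}(z)$, where the $j$-th summand is the image of the level one operator acting on the $j$-th factor, dressed by Heisenberg factors from factors to its left and by powers of $C$ from factors to its right. Translating $b^{i,0}_{\pm r}$ into $d^{\pm jr}b^{i,j}_{\pm r}$ converts those dressings precisely into the tails $(1-q_2^{\pm r})\sum_t\qc_{1,3}^{tr}b^{i,t}_{\pm r}$ that appear inside the currents $A^{i,j}(z)$ and $B^{i,j}(z)$ of \eqref{Acur}--\eqref{Bcur}, so that $:e^{A^{i,j}(z)}:$ and $:e^{B^{i,j}(z)}:$ emerge on the nose. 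Second, I collect the zero mode parts to produce $U^{i,j}(z)$ and $V^{i,j}(z)$: the $e^{-\e_{i,j}}e^{\e_{i-1,j}}$ factor is intrinsic to the level one operator on the $j$-th factor; the powers of $z$ come from the level one $z^{\partial_{i-1,0}-\partial_{i,0}+1}$ together with the spectral shift $u\mapsto q^{n-1-j}z$ (respectively $q^jz$) induced by the Heisenberg dressings; the tails $q^{-\sum_{t=j+1}^{n-1}(\partial_{i-1,t}-\partial_{i,t})}$ and $q^{\sum_{t=0}^{j-1}(\partial_{i-1,t}-\partial_{i,t})}$ record the commutation of the cocycles $e^{\pm\e_{i,j}}$ with the $C$-shifts coming from neighbouring factors.

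The last step is to conjugate by $d^Z$. This conjugation leaves $b^{i,j}_{\pm r}$, $e^{\pm\e_{i,j}}$, and polynomial operators in $z$ invariant, and only modifies the zero mode $d$-exponents. A direct computation of $[Z,\partial_{i-1,j}-\partial_{i,j}+1]$ and $[Z,-\partial_{i-1,j}+\partial_{i,j}+1]$ shows that the total exponent of $d$ is shifted by exactly $(1/2-i)\textsf{e}_{i-1}+(1/2+i)\textsf{e}_i+i(\partial_{i-1,j}-\partial_{i,j})$ for $1\le i\le m-1$ and by the boundary variant in \eqref{Fock6}--\eqref{Fock7} for $i=0$ (where one must take into account the wrap-around $i-1\equiv m-1\bmod m$, which is where the $m$ appears in place of $i$). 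Matching these exponents against \eqref{Fock4}--\eqref{Fock7} completes the identification. I expect the bookkeeping of these $d$-exponents and of the signs coming from the cocycle $e^{\e_{i,j}}e^{\e_{k,l}}=-e^{\e_{k,l}}e^{\e_{i,j}}$ to be the main obstacle; it is purely mechanical but requires consistent conventions for the ordering of tensor factors and for the passage from $b^{i,0}_{\pm r}$ to $b^{i,j}_{\pm r}$.
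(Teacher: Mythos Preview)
Your proposal is correct and follows exactly the approach the paper itself indicates: the paper gives no explicit proof but states just before the proposition that the module is obtained by taking the $n$-fold tensor product via $\Delta^{(n-1)}$, gauge transforming by $d^Z$ as in \eqref{conjug}, and then using the identification $1\otimes\cdots\otimes b^{i,0}_{\pm r}\otimes\cdots\otimes 1 \leftrightarrow \dc^{\pm jr}b^{i,j}_{\pm r}$ to rewrite the result, which is precisely your plan. One small wording slip: conjugation by $d^{Z}$ does \emph{not} leave $e^{\pm\e_{i,j}}$ literally invariant (since $Z$ is quadratic in the $\partial$'s, shifting $\partial_{i,j}$ produces a $d$-factor linear in the $\partial$'s), and that is exactly the source of the modified $d$-exponents you describe in the next sentence.
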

\medskip

Denote this module by 
$\mathbb{F}_{m,n}(u_0,\ldots,u_{n-1})$. 
From Lemma \ref{lem:lev1}, we obtain the following.  
\begin{lem}\label{lem:levn}
We have a decomposition into submodules
\begin{align*}
&\mathbb{F}_{m,n}
(u_0,\ldots,u_{n-1})=
\bigoplus_{s_0,\ldots,s_{n-1}\in\Z}
\mathcal{F}_m^{(\nu_0)}(
u_0^{(s_0)}
;t^{(s_0)},s_0)
\otimes\cdots\otimes 
\mathcal{F}_m^{(\nu_{n-1})}(
u_{n-1}^{(s_{n-1})}
;t^{(s_{n-1})},s_{n-1})
\,,
\end{align*}
where $s_j=ml_j+\nu_j$ ($l_j\in\Z$, $0\le\nu_j\le m-1$), and
\begin{align*}
&u_j^{(s_j)} 
=(-)^m d^{-s_j-m/2} 
qu_j\,,  \quad
t^{(s_j)}
=\frac{\nu_j}{2}(l_j+1)^2+\frac{m-\nu_j}{2}l_j^2\,.
\end{align*}
\qed
\end{lem}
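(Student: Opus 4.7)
The plan is to deduce this decomposition as a direct consequence of Lemma \ref{lem:lev1}. By the construction in Section \ref{sec:levelmn}, the $\E_m$-action on $\mathbb{F}_{m,n}(u_0,\ldots,u_{n-1})$ is obtained by first endowing the tensor product $\mathbb{F}_{m,1}(u_0)\otimes\cdots\otimes\mathbb{F}_{m,1}(u_{n-1})$ with the iterated coproduct structure $\Delta^{(n-1)}$, and then conjugating by the gauge operator $d^Z$ of \eqref{conjug}. Since $d^Z$ is invertible and built from the commuting zero modes $\partial_{s,t}$, this conjugation is an $\E_m$-module isomorphism, so for the purpose of decomposing into submodules it is enough to work with the tensor product module defined via $\Delta^{(n-1)}$.

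The next step is to apply Lemma \ref{lem:lev1} to each factor and distribute the tensor product over the resulting direct sums, giving
\begin{align*}
\bigotimes_{j=0}^{n-1}\mathbb{F}_{m,1}(u_j)
=\bigoplus_{s_0,\ldots,s_{n-1}\in\Z}
\bigotimes_{j=0}^{n-1}\mathcal{F}_m^{(\nu_j)}(u_j^{(s_j)};t^{(s_j)},s_j)\,,
\end{align*}
with the parameters $u_j^{(s_j)}$ and $t^{(s_j)}$ inherited from the one-variable formulas. Conjugation by $d^Z$ then transports this into the claimed decomposition of $\mathbb{F}_{m,n}(u_0,\ldots,u_{n-1})$.

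Finally, I would identify the generating vector of the $(s_0,\ldots,s_{n-1})$ summand as $v^{(s_0)}\otimes\cdots\otimes v^{(s_{n-1})}$, which in the basis of ordered monomials in the operators $e^{\e_{i,j}}$ corresponds to the basis vector $\ket{\bm}$ with $\bm_{i,j}=l_j+1$ for $i<\nu_j$ and $\bm_{i,j}=l_j$ otherwise. A short check confirms that $\sum_i\bm_{i,j}=s_j$ records the central element action on the $j$-th tensor factor, and that $\deg\ket{\bm}=\sum_j t^{(s_j)}$ via the formula $\deg\ket{\bm}=\tfrac12\sum_{s,t}\bm_{s,t}^2$, matching the individual degrees provided by Lemma \ref{lem:lev1}.

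There is no real obstacle beyond Lemma \ref{lem:lev1}. The only point deserving attention is verifying that the gauge conjugation by $d^Z$ does not shift the parameters of individual summands; this is automatic because $d^Z$ commutes with both the grading operator $D=q^{\deg}$ and with the Cartan generators $q^{\ve_i}=q^{\textsf{e}_i}$, and hence acts by a scalar on every joint weight space.
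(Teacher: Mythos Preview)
Your argument is correct and follows exactly the route the paper intends: the lemma is stated as an immediate consequence of Lemma~\ref{lem:lev1}, and you have simply spelled out the one-line deduction (tensor distribute over the direct sums of Lemma~\ref{lem:lev1}, then transport through the gauge isomorphism $d^{\pm Z}$). Your additional remarks identifying the highest weight vector and checking that $d^Z$ preserves the summands are fine supplementary details but are not required for the statement as written.
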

\medskip

We remark that 
Proposition \ref{repEm} holds true also for $m=1$ with the modification
\begin{align*}
&E_0(z)=a^{-1}
\sum_{j=0}^{n-1}u_j^{-1}:e^{A^{0,j}(z)}:
d^{\partial_{0,j}}
\,,
\quad F_0(z)=b^{-1}
\sum_{j=0}^{n-1}u_j:e^{B^{0,j}(z)}:
d^{-\partial_{0,j}}
\,,
\end{align*}
with $ab=q(1-q_1)(1-q_3)$.
In this paper we restrict ourselves to the case $m\ge2$. 
\medskip

\subsection{Representations of $\check{\E}_n$}\label{sec:dual-mn}
In a similar manner we can write a representation of $\check{\E}_n$ 
on the same Fock space. 
We write the generators of $\check{\E}_n$ with checks, as
$\Ec_i(z)$, $\Fc_i(z)$, $\Kc^\pm_i(z)$ and so forth. 

\begin{prop}\label{repEn}
Let $n\ge2$, $m\ge1$, 
$\uc_0,\ldots,\uc_{m-1}\in\C^\times$. 
The following formulas give a level $m$ action of 
$\check{\E}_n$ on $\mathbb{F}_{m,n}$:
\begin{align}
&q^{\check{\ve}_j}=q^{\ssec_j}
\,,
\quad \check{C}=q^m\,,\quad
\check{D}=q^{\deg}\,,
\label{dFock0}
\\
&\Hc_{j,r}=\sum_{i=0}^{m-1} q_3^{-ir}\bc^{i,j}_r\,,
\quad
\Hc_{j,-r}=\sum_{i=0}^{m-1}q^{-(m-1)r}q_1^{-ir}\bc^{i,j}_{-r}\,,
\label{dFock1}
\\
&\Ec_j(z)=\sum_{i=0}^{m-1} \uc_{i}^{-\delta_{j,0}} \Ec^{i,j}(z)\,,
\quad \Ec^{i,j}(z)=:e^{\Ac^{i,j}(z)}:\, \Uc^{i,j}(z)\,,
\label{dFock2}\\
&\Fc_j(z)=\sum_{i=0}^{m-1}\uc_{i}^{\delta_{j,0}}\Fc^{i,j}(z)\,,
\quad \Fc^{i,j}(z)=:e^{\Bc^{i,j}(z)}:\, \Vc^{i,j}(z)\,.
\end{align}
where $0\le i\le m-1$, $0\le j\le n-1$ and $r>0$.
We set for $1\le j\le n-1$
\begin{align}
&\Uc^{i,j}(z)=
e^{\e_{i,j}}e^{-\e_{i,j-1}}
(q^{m-1-i}z)^{-\partial_{i,j-1}+\partial_{i,j}+1}
\label{dFock4}\\
&\quad\times
\dc^{(1/2-j)\ssec_{j-1}+(1/2+j)\ssec_j-j(\partial_{i,j-1}-\partial_{i,j})}
q^{\sum_{s=i+1}^{m-1}(\partial_{s,j-1}-\partial_{s,j})}\,,
\nn\\
&\Vc^{i,j}(z)
=
e^{\e_{i,j-1}}e^{-\e_{i,j}}
(q^{i}z)^{\partial_{i,j-1}-\partial_{i,j}+1}
\label{dFock5}\\
&\quad\times
\dc^{-(1/2-j)\ssec_{j-1}-(1/2+j)\ssec_j+j(\partial_{i,j-1}-\partial_{i,j})}
q^{-\sum_{s=0}^{i-1}(\partial_{s,j-1}-\partial_{s,j})}
\,,\nn
\end{align}
and for $j=0$ 
\begin{align}
&\Uc^{i,0}(z)=
e^{\e_{i,0}}e^{-\e_{i,n-1}}
(q^{m-1-i}z)^{-\partial_{i,n-1}+\partial_{i,0}+1}
\label{dFock6}\\
&\quad\times
\dc^{(1/2-n)\ssec_{n-1}+(1/2)\ssec_0-n\partial_{i,n-1}}
q^{\sum_{s=i+1}^{m-1}(\partial_{s,n-1}-\partial_{s,0})
} 
\,,
\nn\\
&\Vc^{i,0}(z)
=
e^{\e_{i,n-1}}e^{-\e_{i,0}}
(q^{i}z)^{\partial_{i,n-1}-\partial_{i,0}+1}
\label{dFock7}\\
&\quad\times
\dc^{-(1/2-n)\ssec_{n-1}-(1/2)\ssec_0+n\partial_{i,n-1}}
q^{-\sum_{s=0}^{i-1}(\partial_{s,n-1}-\partial_{s,0})
}\,.
\nn
\end{align}
\qed
\end{prop}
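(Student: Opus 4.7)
The plan is to mirror the argument that establishes Proposition \ref{repEm}, exploiting the manifest symmetry of the setup under the exchange $(m,i,q_1,q_3,b,a) \leftrightarrow (n,j,\qc_1,\qc_3,\bc,a)$. First I would verify the $m=1$ base case: on $\mathbb{F}_{1,n}$ the bosons $\bc^{0,j}_{\pm r}$ (with $\bc^{0,j}_r=-q^r(\qc_3^r a^{0,j-1}_r-a^{0,j}_r)$ and the analogous formula with negative mode) form, up to signs, a standard copy of the bosons used by Saito, and the vertex operators $:\!e^{\Ac^{0,j}(z)}\!: \Uc^{0,j}(z)$, $:\!e^{\Bc^{0,j}(z)}\!: \Vc^{0,j}(z)$ then give a level one $\check{\E}_n$-action on $\mathbb{F}_{1,n}(\uc_0)$. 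This is the exact analog of the level one statement that underlies Proposition \ref{repEm}.

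Next I would build the higher level action by identifying
\[
\mathbb{F}_{m,n} \;\cong\; \mathbb{F}_{1,n}(\uc_0)\otimes\cdots\otimes\mathbb{F}_{1,n}(\uc_{m-1})
\]
in the obvious way (the tensor index corresponds to the superscript $i$ of $a^{i,j}_r$), and applying the iterated coproduct $\check{\Delta}^{(m-1)}$ of $\check{\E}_n$. Under this identification, the $i$-th tensor factor copy $b^{0,j}_{\pm r}\otimes 1 \otimes\cdots$ is matched with $q^{\mp(m-1-i)r}\dc^{\pm ir}$ times an appropriate combination of $\bc^{i,j}_{\pm r}$; the geometric-series sums in the formulas \eqref{Accur}--\eqref{Bccur} for $\Ac^{i,j}(z)$ and $\Bc^{i,j}(z)$ arise precisely from the $\check{K}^\pm$-factors in the iterated coproduct. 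After a gauge transformation
\[
\check{\Delta}^{(m-1)}x \;\longrightarrow\; \dc^{\check{Z}}\bigl(\check{\Delta}^{(m-1)}x\bigr)\dc^{-\check{Z}}\,,\qquad
\check{Z}=-\tfrac12\sum_{t=0}^{n-1}\sum_{0\le s\neq s'\le m-1}\bigl(t+\tfrac12\bigr)\partial_{s,t}\partial_{s',t}\,,
\]
analogous to \eqref{conjug}, the shifts of the $\dc$-exponents in $\Uc^{i,j}(z)$, $\Vc^{i,j}(z)$ should settle into the symmetric form $(1/2\pm j)\ssec_{j-1}$, $(1/2\pm j)\ssec_j$ displayed in \eqref{dFock4}--\eqref{dFock7}; the boundary cases $j=0$ and $j=n-1$ are handled by the same bookkeeping of sign factors from $e^{\pm\ve_{i,j}}$ that produces \eqref{Fock6}--\eqref{Fock7} in the $\E_m$ case.

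The main obstacle is the technical verification that the gauge choice $\check{Z}$ above correctly converts the asymmetric expression coming from $\check{\Delta}^{(m-1)}$ into the stated symmetric formulas, and that the Chevalley--Serre-type relations of $\check{\E}_n$ hold exactly; the relations for $\Hc_{j,r}$, $\Kc^\pm_j(z)$ and the Serre relations among the $\Ec^{i,j}(z)$, $\Fc^{i,j}(z)$ reduce to contractions of the bosons $\bc^{i,j}_r$ among themselves (relation \eqref{bb2}), which look identical to those among the $b^{i,j}_r$ with the roles of $(m,q_1,q_3)$ and $(n,\qc_1,\qc_3)$ interchanged. Hence each relation to be checked is equivalent, under this symmetry, to the corresponding relation already established in the proof of Proposition \ref{repEm}. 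The mixed contractions \eqref{bb3}--\eqref{bb4} are not used at this stage; they will enter only later when proving Theorem \ref{affine-com}.
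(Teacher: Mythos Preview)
Your proposal is correct and follows exactly the route the paper takes: the paper states Proposition \ref{repEn} without proof, prefacing it only with ``In a similar manner we can write a representation of $\check{\E}_n$ on the same Fock space,'' relying on the evident $(m,q_1,q_3,d,b)\leftrightarrow(n,\qc_1,\qc_3,\dc,\bc)$ symmetry with Proposition \ref{repEm}. Your outline---level one action on $\mathbb{F}_{1,n}$, iterated coproduct $\check{\Delta}^{(m-1)}$, and the dual gauge twist $\dc^{\check Z}$---is precisely this parallel construction.
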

\bigskip

\subsection{Quantum affine subalgebras}\label{sec:affine}
Recall that we have a vertical subalgebra $\U_m\subset \E_m$ 
isomorphic to the quantum affine algebra $U_q\widehat{\gl}_m$ generated by 
the currents $E_i(z),F_i(z)$, $1\le i\le m-1$, and $D$. We also have the vertical subalgebra 
$\check{\U}_n\subset \check{\E}_n$ 
generated by  $\Ec_i(z),\Fc_i(z)$ with $1\le i\le n-1$, 
and $\check{D}$. 

\begin{thm}\label{affine-com}
On the total Fock space $\mathbb{F}_{m,n}$, the action of the vertical 
subalgebras $\U_m$ and $\check{\U}_n$ mutually commute.  \qed
\end{thm}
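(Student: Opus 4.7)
The plan is to reduce the claim to showing that the generating currents commute pairwise:
\[
[E_i(z),\Ec_j(w)]=[E_i(z),\Fc_j(w)]=[F_i(z),\Ec_j(w)]=[F_i(z),\Fc_j(w)]=0
\]
for $1\le i\le m-1$ and $1\le j\le n-1$. Commutativity of the Cartan currents with $\Ec_j,\Fc_j$ will then follow from the relation expressing $K_i^\pm$ (and hence $H_{i,r}$) as a commutator of $E_i$ and $F_i$. Since both $D$ and $\check{D}$ act on $\mathbb{F}_{m,n}$ as the same operator $q^{\deg}$, the part involving the grading elements is automatic. Because $i\neq 0$ and $j\neq 0$, the spectral-parameter prefactors $u_{j_1}^{-\delta_{i,0}}$ and $\uc_{i_1}^{-\delta_{j,0}}$ in \eqref{Fock2} and \eqref{dFock2} are trivial, and one works with the clean decompositions $E_i(z)=\sum_{j_1=0}^{n-1}E^{i,j_1}(z)$, $\Ec_j(w)=\sum_{i_1=0}^{m-1}\Ec^{i_1,j}(w)$, and their $F$-analogues.

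For each pair $(i_1,j_1)$, the plan is to put the products $E^{i,j_1}(z)\,\Ec^{i_1,j}(w)$ and $\Ec^{i_1,j}(w)\,E^{i,j_1}(z)$ into normal-ordered form. Each factors as a scalar contraction times a normal-ordered exponential $:e^{A^{i,j_1}(z)+\Ac^{i_1,j}(w)}:$ times a zero-mode part. The scalar contraction, obtained from the cross-boson bracket \eqref{bb3} by summing a geometric series, is a rational function of $z/w$ whose only nontrivial singular factors appear when $i_1\in\{i-1,i\}$ and $j_1\in\{j-1,j\}$. The zero-mode part is obtained by commuting $U^{i,j_1}(z)$ past $\Uc^{i_1,j}(w)$: one must track the sign cocycle of the $e^{\pm\e_{s,t}}$'s together with the $d$ and $q$ shifts from \eqref{Fock4}--\eqref{Fock7} and \eqref{dFock4}--\eqref{dFock7}. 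It is precisely the gauge conjugation \eqref{conjug} that arranges these to recombine symmetrically in $(z,w)$. The mixed combinations $[E_i,\Fc_j]$, $[F_i,\Ec_j]$, $[F_i,\Fc_j]$ are treated in parallel using the remaining cross-brackets \eqref{bb1}, \eqref{bb2}, \eqref{bb4}; these contractions are exactly what the tables of Appendix \ref{sec:contraction} are designed to encode.

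The main obstacle is the bookkeeping. The boundary cases $i_1=0$ and $j_1=0$, corresponding to the currents $U^{0,j_1}, V^{0,j_1}$ in \eqref{Fock6}--\eqref{Fock7} and $\Uc^{i_1,0}, \Vc^{i_1,0}$ in \eqref{dFock6}--\eqref{dFock7}, must be handled separately, because the exponents of $d$ and $\dc$ in those formulas contain additional factors of $m$ and $n$ not present in the generic formulas. Once each of the four double sums is shown to be symmetric in $(z,w)$ after swapping the order of operators (so that the possible poles at $z=q^a w$ in the contraction match exactly and the remaining monomial and sign prefactors from the zero-mode exchange agree), summation over $(i_1,j_1)$ yields the vanishing commutators and the theorem follows.
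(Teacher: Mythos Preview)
Your reduction to the four families of commutators $[E_i(z),\Ec_l(w)]$, $[E_i(z),\Fc_l(w)]$, $[F_i(z),\Ec_l(w)]$, $[F_i(z),\Fc_l(w)]$ for $1\le i\le m-1$, $1\le l\le n-1$ is correct and is exactly the paper's strategy; recovering the Cartan commutativity from the $E$--$F$ relation is also valid (the paper does it more directly via Lemma~\ref{HHc}).

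The gap is in the mechanism you propose for the current commutators. You describe the argument as checking that, for each $(i_1,j_1)$, the contractions from the two orderings are expansions of the same rational function and the zero-mode exchange factors agree, so that the double sum is ``symmetric in $(z,w)$''. But the individual commutators $[E^{i,j_1}(z),\Ec^{i_1,l}(w)]$ do \emph{not} vanish: for $(i_1,j_1)=(i,l)$ and for $(i_1,j_1)=(i-1,l-1)$ the oscillator contraction has a genuine simple pole (Table~\ref{EEc}), and each produces a nonzero delta-function term. What the paper shows (see \eqref{cancelEE}) is that these two delta functions are supported at the \emph{same} locus $w=q^{-m+n}q_3^{-i}\qc_3^{\,l}z$, and that their residues cancel after summing over $(i_1,j_1)$.

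That cancellation is not a matter of matching prefactors; it requires the identity of Lemma~\ref{ZerRel-ndr},
\[
:E^{i,l}(z)\Ec^{i,l}(w):+q^{-2}:E^{i,l-1}(z)\Ec^{i-1,l}(w):=0
\qquad\text{at } w=q^{-m+n}q_3^{-i}\qc_3^{\,l}z,
\]
an equality between two \emph{different} normal-ordered products. This identity is a consequence of the linear relations \eqref{bbc-rel2}, \eqref{bbc-rel1} among the bosons $b^{i,j}_{\pm r}$ and $\bc^{i,j}_{\pm r}$, and has nothing to do with the gauge conjugation \eqref{conjug}. Your outline contains no mention of delta functions, of the specific pairing $(i_1,j_1)\leftrightarrow(i_1-1,j_1-1)$, or of any such identity; without it the argument cannot close.
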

Theorem \ref{affine-com} will be proved in Appendix \ref{sec:affine-com}. 
The picture is summarized in Fig. \ref{comm-action}. 

\begin{center}
\begin{figure}[H]
\begin{align*}
\xymatrix@R=6pt{
\E_m(q_1,q_2,q_3) &      &\E_n(q^\vee_1,q_2,q^\vee_3) \\
\bigcup         &       &  \bigcup\\
U_q\widehat{\gl}_m
\ar@/^14pt/[r]^-{\text{level $n$}}
& \mathbb{F}_{mn}& U_q\widehat{\gl}_n
\ar@/_14pt/[l]_-{\text{level $m$}}
} 
\end{align*}
\caption{Commuting actions of quantum affine algebras. The corresponding toroidal actions do
not commute.\label{comm-action}}
\end{figure}
\end{center}

The Fock space $\F^{(\nu)}_m(u;t,s)$ can be constructed as a semi-infinite wedge product of level zero vector representations, see \cite{FJMM3}.
The usual $(\gl_m,\gl_n)$ duality uses either symmetric or skew-symmetric powers of vector representations.
Therefore, it is natural to think that our construction is a quantum affine analog of the latter.

\section{$(\E_m,\check{\E}_n)$ duality}\label{sec:duality}

In this section, we present integrals of motion 
associated with quantum toroidal algebras
$\E_m$ and $\check{\E}_n$ following \cite{FJM}.  
We then state the duality. 

\subsection{Dressed currents}
Recall that we deal with algebras $\E_m=\E_m(q_1,q_2,q_3)$ and 
$\E^\vee_n=\E_n(\qc_1,q_2,\qc_3)$ where $q_1,q_2,\qc_1$ are independent 
parameters. We set
\begin{align}
&p=\qc_1^n\,,\quad 
p^*=\qc_3^{-n}\,, 
\qquad
\pc=q_1^m\,,  
\quad \pc^*=q_3^{-m}\,.   
\label{pppp}
\end{align}
We assume that $|p|,|p^*|,|\pc|,|\pc^*|<1$. 

Introduce the dressed currents
\begin{align}
&\mbE_i(z)=\mbK_i^{-}(z)^{-1} E_i(z) \,,
\quad
\mbK^-_i(z)=\prod_{l\ge0}\bar{K}^-_i(p^{*l}z)\,,
\label{dressedE}
\\
&\mbF_i(z)= F_i(z) \mbK_i^{+}(z)^{-1}\,,
\quad
\mbK^+_i(z)=\prod_{l\ge0}\bar{K}^+_i(p^{-l}z)\,,
\label{dressedF}
\end{align}
where $\bar{K}^\pm_i(z)=(K_i)^{\mp1}K^\pm_i(z)$, see \eqref{defK}. 
They are given respectively by formulas \eqref{Fock2} and \eqref{Fock3},  
replacing $A^{i,j}(z)$, $B^{i,j}(z)$ by  the currents
$\mbA^{i,j}(z)$, $\mbB^{i,j}(z)$ with Fourier coefficients
\begin{align}
&\mbA^{i,j}_r
=-\frac{1}{[r]}q^{-(n-1)r}\qc_3^{-jr}b^{i,j}_{r}\,,
\quad 
\mbA^{i,j}_{-r}=
\frac{1}{[r]}q^{(n-2)r}\qc_3^{jr}
\Bigl(b^{i,j}_{-r}
-\frac{1-q_2^r}{1-p^{*r}}\sum_{t=0}^{n-1}
\qc_3^{-tr}b^{i,j-t}_{-r}
\Bigr)
\,,
\label{Adr1}\\ 
&\mbB^{i,j}_r=
\frac{1}{[r]}q^r\qc_1^{jr}
\Bigl(b^{i,j}_r-\frac{1-q_2^{-r}}{1-p^r}
\sum_{t=0}^{n-1}\qc_1^{tr}b^{i,j+t}_r
\Bigr)
\,,
\quad 
\mbB^{i,j}_{-r}=-\frac{1}{[r]}\qc_1^{-jr}b^{i,j}_{-r}\,.
\label{Bdr1} 
\end{align}

The dual counterparts are
\begin{align}
&\check{\mbE}_i(z)=\check{\mbK}_i^{-}(z)^{-1} \check{E}_i(z) \,,
\quad
\check{\mbK}^-_i(z)=\prod_{l\ge0}\check{\bar{K}}^-_i(\pc^{*})^{l}z)\,,
\\
&\check{\mbF}_i(z)= \check{F}_i(z) \check{\mbK}_i^{+}(z)^{-1}\,,
\quad
\check{\mbK}^+_i(z)=\prod_{l\ge0}\check{\bar{K}}^+_i(\pc^{-l}z)\,,
\end{align}
with
\begin{align}
&
\check{\mbA}^{i,j}_r=
-\frac{1}{[r]}q^{-(m-1)r}q_3^{-ir}\bc^{i,j}_r\,,
\quad 
\check{\mbA}^{i,j}_{-r}=
\frac{1}{[r]}q^{(m-2)r}q_3^{ir}
\Bigl(\bc^{i,j}_{-r}
-\frac{1-q_2^r}{1-\pc^{*r}}
\sum_{s=0}^{m-1}q_3^{-sr}\bc^{i-s,j}_{-r}
\Bigr)\,,\label{Adr2}\\  
&\check{\mbB}^{i,j}_r=\frac{1}{[r]}
q^rq_1^{ir}
\Bigl(\bc^{i,j}_r-\frac{1-q_2^{-r}}{1-\pc^r}
\sum_{s=0}^{m-1}q_1^{sr}\bc^{i+s,j}_r\Bigr)\,,
\quad 
\check{\mbB}^{i,j}_{-r}=-\frac{1}{[r]}q_1^{-ir}\bc^{i,j}_{-r}\,.
\label{Bdr2} 
\end{align}

The currents $\mbA^{i,j}(z),\mbB^{i,j}(z)$ are $m$-periodic in $i$, whereas
 $\check{\mbA}^{i,j}(z),\check{\mbB}^{i,j}(z)$ are $n$-periodic in $j$.
They are quasi-periodic with respect to the other index,
\begin{align}
&\mbA^{i,j-n}(z)=\mbA^{i,j}(p^{*}z)\,,
\quad
\mbB^{i,j-n}(z)=\mbB^{i,j}(p z)\,,
\label{quasi1}\\
&\check{\mbA}^{i-m,j}(z)=\check{\mbA}^{i,j}(\pc^{*}z)\,,
\quad
\check{\mbB}^{i-m,j}(z)=\check{\mbB}^{i,j}(\pc z)\,.
\label{quasi2}
\end{align}
\medskip

\subsection{Integrals of motion}\label{subsec:IM}
Elliptic deformation of integrals of motion in CFT 
were constructed in \cite{FKSW}, \cite{FKSW1}, \cite{KS} 
as operators acting on Fock spaces. 
In \cite{FJM} they were identified with Taylor coefficients
of transfer matrices associated with quantum toroidal algebras. 
Let us briefly recall this.  

We introduce parameters $\bar p, \bar p_1,\ldots,\bar p_{m-1}$ and set 
\begin{align}
&p={\bar p}q^{-n},\quad p^*={\bar p}q^{n}\,,\quad 
p_i=\bar p_i q^{-\varepsilon_{i-1}+\varepsilon_i}  \,,
\quad 
p^*_i=\bar p_i q^{\varepsilon_{i-1}-\varepsilon_i}  \,,
\label{pi}
\\
&\bs p=(p,p_1,\ldots,p_{m-1})\,,\quad
\bs p^*=(p^*,p^*_1,\ldots,p^*_{m-1})\,, 
\label{bspp}
\end{align}
where $p,p^*$ are those in \eqref{pppp}.

Let $\cR$ be the universal $R$ matrix of $\E_m$ corresponding to the coproduct 
$\Delta$ in Appendix \ref{sec:Em}. Transfer matrices are weighted traces 
over $\F^{(\mu)}(u)=\F_m^{(\mu)}(u;0,0)$, 
\begin{align}
&{\bf T}_\mu(u;\bs p)=\Tr_{\F^{(\mu)}(u),1}
\Bigl(\bigl(\bar p^{{\bf d}}\prod_{i=1}^{n-1} {\bar p}_i^{\,-\bar\Lambda_i}\bigr)_1
\ \cR_{12}\Bigr)q^{{\bf d}}
\,,
\label{TM}\\
&{\bf T}^*_\mu(u;\bs p^*)=
\Tr_{\F^{(\mu)}(u),1}\Bigl(\bigl(
\bar p^{{\bf d}}\prod_{i=1}^{n-1} \bar p_i^{\, -\bar\Lambda_i} \bigr)_1
\ \cR_{21}^{-1}\Bigr)q^{-{\bf d}}\,.
\label{TM*}
\end{align}
Here we set $D=q^{\bf d}$, $\bar\Lambda_i=\varepsilon_1+\cdots+\varepsilon_i$, 
the suffixes $1,2$ refer to the tensor components and trace is taken on the first component. 

The transfer matrices \eqref{TM}, \eqref{TM*} are formal series in $u^{\mp1}$. 
Integrals of motion are defined as their coefficients (up to 
some normalization constants  $c_N,c_N^*$ which are  irrelevant here, see \cite{FJM} for the explicit formulas in cases of $m=1,2$).
\begin{align*}
&{\bf T}_\mu(u;\bs p)
=\sum_{M=0}^\infty u^{-M}c_M \mbG_{\mu,M}(\bs p)
\,,
\quad
{\bf T}^*_\mu(u;\bs p^*)
=\sum_{M=0}^\infty u^{M}c^*_M\cdot \mbG_{\mu,M}^*(\bs p^*)\,.
\end{align*}
We call $\mbG_{\mu,M}(\bs p)$, $\mbG^*_{\mu,M}(\bs p^*)$ 
integrals of motion of the first and the second kind, respectively. 
In order to write explicit formulas for them, we prepare some symbols.

Let $\vartheta^{(m)}_\mu$ denote the theta function associated with 
 the root lattice $\bar Q^{(m)}$ of $\mathfrak{sl}_m$: 
\begin{align*}
&\vartheta^{(m)}_\mu(z_1,\ldots,z_{m};\bs p)
=\sum_{\beta\in\bar Q^{(m)}+\bar\Lambda_\mu}
p^{(\beta,\beta)/2}\prod_{s=1}^{m-1}p_s^{-(\beta,\bar\Lambda_s)}
\prod_{i=1}^{m}z_i^{(\beta,\bar\alpha_i)}\,.
\end{align*}
Define functions $h_{\mu,M},h^*_{\mu,M}$ by 
\begin{align}
&h_{\mu,M}(x_{1,1},\ldots,x_{1,M},\ldots,x_{m,1},\ldots, x_{m,M};\bs p)
\label{hmu}\\
&=\frac{\prod_{i=1}^{m}\prod_{a<b}\Theta_p(x_{i,b}/x_{i,a},q_2x_{i,b}/x_{i,a})}
{\prod_{i=1}^{m-1}\prod_{a,b}\Theta_p(q_3^{-1}x_{i+1,b}/x_{i,a})
\cdot \prod_{a,b}\Theta_p(q_1x_{1,b}/x_{m,a})}
\nn\\
&\times\prod_{i=1}^{m}\prod_{a=1}^M x_{i,a}^{M-2a+1}
\cdot
\vartheta^{(m)}_\mu\Bigl(\prod_{a=1}^M x_{1,a},
\ldots,
\prod_{a=1}^M x_{m,a};\bs p\Bigr) 
\,,\nn\\
&h^*_{\mu,M}(x_{1,1},\ldots,x_{1,M},\ldots,x_{m,1},\ldots, x_{m,M};\bs p^*)
\label{hmu*}
\\
&=\frac{\prod_{i=1}^{m}\prod_{a<b}
\Theta_{p^*}(x_{i,b}/x_{i,a},q_2x_{i,b}/x_{i,a})}
{\prod_{i=1}^{m-1}\prod_{a,b}\Theta_{p^*}(q_3^{-1}x_{i+1,b}/x_{i,a})
\cdot \prod_{a,b}\Theta_{p^*}(q_1x_{1,b}/x_{m,a})}
\nn\\
&\times\prod_{i=1}^{m}\prod_{a=1}^M x_{i,a}^{M-2a+1}
\cdot
\vartheta^{(m)}_\mu\Bigl(\prod_{a=1}^M x_{1,a}^{-1},
\ldots,
\prod_{a=1}^M x_{m,a}^{-1};\bs p^*\Bigr)
\,.
\nn
\end{align}
We have the quasi-periodicity property
\begin{align*}
&\frac{h_{\mu,M}(x_{1,1},\ldots,p x_{i,a},\ldots,x_{m,M};\bs p)} 
{h_{\mu,M}(x_{1,1},\ldots,x_{i,a},\ldots,x_{m,M};\bs p)} 
=p_i q_2^{M-2a+1+M(\delta_{i,m}-\delta_{i,1})}\,,
\\
&\frac{h^*_{\mu,M}(x_{1,1},\ldots,p^* x_{i,a},\ldots,x_{m,M};\bs p^*)} 
{h^*_{\mu,M}(x_{1,1},\ldots,x_{i,a},\ldots,x_{m,M};\bs p^*)} 
=(p^*_i)^{-1} q_2^{M-2a+1+M(\delta_{i,m}-\delta_{i,1})}\,,
\end{align*}
where $p_m:=(p_1\cdots p_{m-1})^{-1}$, $p^*_m:=(p^*_1\cdots p^*_{m-1})^{-1}$.

\begin{prop}\label{prop:IM}\cite{FJM}
The integrals of motion $\mbG_{\mu,M}(\bs p)$, $\mbG^*_{\mu,M}(\bs p^*)$
are given by the following multiple integrals. 
\begin{align*}
\mbG_{\mu,M}(\bs p)&=
\int\!\!\cdots\!\!\int \prod_{i=1}^{m} \prod_{a=1}^M
\frac{dx_{i,a}}{2\pi \sqrt{-1}\,x_{i,a}}
\ 
\prod_{1\le a\le M}^{\curvearrowright}\mbF_1(x_{1,a})
\cdots 
\prod_{1\le a\le N}^{\curvearrowright}\mbF_{m}(x_{m,a})
\\
&\times h_{\mu,M}(x_{1,1},\ldots,x_{1,M},\ldots,x_{m,1},\ldots, x_{m,M};\bs p)
\,,\\
\mbG^*_{\mu,M}(\bs p^*)
&=
\int\!\!\cdots\!\!\int \prod_{i=1}^{m} \prod_{a=1}^M
\frac{dx_{i,a}}{2\pi\sqrt{-1}\,x_{i,a}}
\prod_{1\le a\le M}^{\curvearrowleft}\mbE_{m}(x_{m,a})
\cdots 
\prod_{1\le a\le M}^{\curvearrowleft}\mbE_1(x_{1,a})
\\
&\times h^{*}_{\mu,M}(x_{1,1},\ldots,x_{1,M},\ldots,x_{m,1},\ldots, x_{m,M};\bs p^*)
\,,
\end{align*}
where $\mbF_m(z)=\mbF_0(z)$, $\mbE_m(z)=\mbE_0(z)$.
The integrals in $\mbG_{\mu,M}(\bs p)$ (resp. $\mbG^*_{\mu,M}(\bs p^*)$)
are taken to be the unit circle $|x_{i,a}|=1$ when $|q_1|,|q_3|<1$ (resp. $|q_1|,|q_3|>1$), 
and by analytic continuation in the general case. 
\qed
\end{prop}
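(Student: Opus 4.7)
My plan is to start from the defining trace formulas \eqref{TM}--\eqref{TM*} and reduce them step by step to the claimed multiple integrals. The main structural input is a factorization of the universal $R$ matrix of $\E_m$ of the form $\cR=\cR^+\cR^0\cR^-$, where $\cR^\pm$ are ordered (infinite) products of exponentials in $F_i(z)\otimes E_i(z)$-type bilinears and $\cR^0$ is Cartan-diagonal. First I would observe that conjugating $\cR^\pm$ by the elliptic twist $(\bar p^{\mathbf d})_1$ effectively replaces each $E_i(z)$ inside by $E_i(\bar p z)$; summing the resulting geometric series in $\bar p$ assembles the infinite products $\mbK_i^{-}(z)^{-1}$ of \eqref{dressedE}--\eqref{dressedF}, so that the dressed currents $\mbE_i, \mbF_i$ appear automatically in the computation.

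The second step is to evaluate the partial trace over the first tensor factor $\F^{(\mu)}(u)$. Splitting this Fock module into oscillator and lattice parts, the oscillator trace is computed by the Clavelli--Shapiro identity, which converts each contraction between a $B^{i,j}(z)$ in $\mbF_i$ (R-matrix side) and an $A^{i,j}(z)$ in $E_i$ (first tensor side) into an infinite product in $\bar p$ that assembles as a $\Theta_p$. Comparing with \eqref{hmu}, self-contractions $\mbF_i(x_{i,a})\mbF_i(x_{i,b})$ produce the numerator factor $\Theta_p(x_{i,b}/x_{i,a},q_2 x_{i,b}/x_{i,a})$, while neighbour contractions between $\mbF_i$ and $\mbF_{i\pm 1}$ give the denominator factors $\Theta_p(q_3^{-1}x_{i+1,b}/x_{i,a})$ and $\Theta_p(q_1 x_{1,b}/x_{m,a})$. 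The monomial prefactors $\prod x_{i,a}^{M-2a+1}$ come from the zero-mode shifts $z^{\pm\partial}$ inside $U^{i,j}(z), V^{i,j}(z)$, and the lattice trace against the twist $\prod \bar p_i^{-\bar\Lambda_i}$ yields a sum over $\bar Q^{(m)}+\bar\Lambda_\mu$ with $p$-Gaussian weights, which is exactly $\vartheta^{(m)}_\mu$ by definition.

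Finally, since only $E_0$ (in $\cR$) and $F_0$ (in $\cR^*$) carry the spectral parameter $u$, extracting the coefficient of $u^{\mp M}$ selects precisely $M$ copies of each dressed current $\mbF_i$ (resp.\ $\mbE_i$), arranged in the ordered products appearing in the proposition, and turns the formal sums over Drinfeld modes into the displayed $mM$-fold contour integrals. The unit-circle prescription is well defined when $|q_1|,|q_3|<1$ (resp.\ $>1$) because all contractions are then analytic on $|x_{i,a}|=1$; the general case follows by analytic continuation. The main obstacle I expect is the combinatorial bookkeeping in step two: tracking which contraction produces which $\Theta_p$ factor, and recovering the exponents $M-2a+1$ together with the correct arguments of $\vartheta^{(m)}_\mu$ once all the zero-mode shifts buried in \eqref{Fock4}--\eqref{Fock7} and in the gauge transformation \eqref{conjug} are properly accounted for. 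The quasi-periodicities of $h_{\mu,M}, h^*_{\mu,M}$ recorded just after \eqref{hmu*} provide a useful consistency check throughout.
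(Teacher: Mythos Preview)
The paper gives no proof of this proposition: it is quoted verbatim from \cite{FJM} and closed with \qed\ immediately after the statement. So there is nothing in the present paper to compare your proposal against; your sketch is an attempt at the argument of \cite{FJM}, not at anything done here.

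That said, two points in your outline would need tightening before it could stand as a proof. First, the dressing of $E_i,F_i$ into $\mbE_i,\mbF_i$ does not come from conjugating $\cR^\pm$ by $(\bar p^{\mathbf d})_1$ alone; the infinite products $\mbK_i^\pm(z)^{-1}$ arise from the Cartan block $\cR^0$ of the $R$-matrix together with the twist, and one has to push $\cR^0$ through $\cR^\pm$ using the $H$--$E$, $H$--$F$ relations before this is visible. Second, your argument that ``only $E_0,F_0$ carry $u$, so extracting $u^{\mp M}$ selects $M$ copies of each $\mbF_i$'' is incomplete: the power of $u$ fixes only the number of $E_0$'s in the auxiliary slot, and the reason one gets exactly $M$ copies of \emph{every} $E_i$ is the zero-weight constraint on the trace over $\F^{(\mu)}(u)$ (the operators $E_i$ shift the $\mathfrak{sl}_m$ weight by $\bar\alpha_i$, and $\sum_i\bar\alpha_i=0$ forces equal multiplicities). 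Without this step the counting in your final paragraph does not close.
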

More details about the contours will be given in Subsection \ref{contours}.
\medskip

Operators 
$\mbG_{\mu,M}(\bs p)$, $\mbG^*_{\mu,M}(\bs p^*)$ 
act on each subspace of fixed `weight' $\bm$, 
\[
\C[\{a^{i,j}_{-r}\}_{r>0, 0\le i\le m-1\atop 0\le j\le n-1}] \otimes\ket{\bm}    
\subset \mathbb{F}_{m,n}\,.
\]
By construction they commute with each other,
\begin{align*}
[\mbG_{\mu,M}(\bs p),\mbG_{\nu,N}(\bs p)]=
[\mbG_{\mu,M}(\bs p),\mbG^*_{\nu,N}(\bs p^*)]=
[\mbG^*_{\mu,M}(\bs p^*),\mbG^*_{\nu,N}(\bs p^*)]=0\,.
\end{align*}
\medskip

\subsection{Duality}
In a similar manner, we have the dual counterparts of integrals of motion
\begin{align*}
&\check{\mbG}_{\nu,N}(\bs \pc)=
\int\!\!\cdots\!\!\int \prod_{j=1}^{n} \prod_{a=1}^N
\frac{dy_{j,a}}{2\pi\sqrt{-1}\,y_{j,a}}
\ 
\prod_{1\le a\le N}^{\curvearrowright}\check{\mbF}_1(y_{1,a})
\cdots 
\prod_{1\le a\le N}^{\curvearrowright}\check{\mbF}_{n}(y_{n,a})
\\
&\times \check{h}_{\nu,N}(y_{1,1},\ldots,y_{1,N},
\ldots,y_{n,1},\ldots, y_{n,N};\bs \pc)
\,,\\
&\check{\mbG}^*_{\nu,N}(\bs \pc^*)=
\int\!\!\cdots\!\!\int \prod_{j=1}^{n} \prod_{a=1}^N
\frac{dy_{j,a}}{2\pi\sqrt{-1}\,y_{j,a}}
\prod_{1\le a\le N}^{\curvearrowleft}\check{\mbE}_{n}(y_{n,a})
\cdots 
\prod_{1\le a\le N}^{\curvearrowleft}\check{\mbE}_1(y_{1,a})
\\
&\times \check{h}^{*}_{\nu,N}(y_{1,1},\ldots,y_{1,N},
\ldots,y_{n,1},\ldots, y_{n,N};\bs \pc^*)
\,.
\end{align*}
They depend on parameters ${\bs \pc}=(\pc,\pc_1,\ldots,\pc_{n-1})$,
 ${\bs \pc^*}=(\pc^*,\pc^*_1,\ldots,\pc^*_{n-1})$ related to 
$\bar \pc,\bar \pc_1,\ldots,\bar \pc_{n-1}$
in the same way as in \eqref{pi} (with $m$ and $n$ interchanged). 
\medskip

The following is the main result of this paper.
\begin{thm}\label{thm:IM}
Assume that the parameters $q,q_1,\qc_1,u_i,\uc_l$ are generic. 
Then we have
\begin{align*}
&[\mbG_{\mu,M}(\bs p),\check{\mbG}_{\nu,N}(\bs \pc)]=
[\mbG^*_{\mu,M}(\bs p^*),\check{\mbG}_{\nu,N}(\bs \pc)]
\\
&=
[\mbG_{\mu,M}(\bs p),\check{\mbG}^*_{\nu,N}(\bs \pc^*)]
=
[\mbG^*_{\mu,M}(\bs p^*),\check{\mbG}^*_{\nu,N}(\bs \pc^*)]=0\,
\end{align*}
for all $\mu\in\Z/m\Z,\nu\in\Z/n\Z$ and $M,N\ge1$, provided
\begin{align}
&\bar p_i=\frac{\uc_i}{\uc_{i-1}}
\, \quad (1\le i\le m-1)\,,
\\
& \bar\pc_l=\frac{u_l}{u_{l-1}}
\, \quad (1\le l\le n-1)\,.
\end{align}
\qed
\end{thm}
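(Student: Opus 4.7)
The plan is to work on the level of integrands, following \cite{FKSW}, \cite{KS}. By Proposition \ref{prop:IM} together with the explicit realizations \eqref{Fock3}, \eqref{Bdr1} (and their check counterparts \eqref{dFock2}, \eqref{Bdr2}), the product $\mbG_{\mu,M}(\bs p)\,\check{\mbG}_{\nu,N}(\bs\pc)$ can be rewritten as a single $(mM+nN)$-fold contour integral whose integrand is a normal-ordered product of all $\mbF^{i,j}(x_{i,a})$'s and $\check{\mbF}^{i,j}(y_{j,b})$'s, multiplied by (a) pairwise scalar contractions computed from \eqref{bb1}--\eqref{bb4}, and (b) the kernel $h_{\mu,M}\,\check{h}_{\nu,N}$. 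Because the dressings contain the infinite products $\mbK^\pm_i(z)$, $\check{\mbK}^\pm_j(z)$, the cross contractions of $b$ with $\bc$ assemble into ratios of $\Theta_p,\Theta_{p^*},\Theta_\pc,\Theta_{\pc^*}$; the zero-mode pieces $V^{i,j},\check{V}^{i,j}$ contribute power functions in the $x,y$ and in the spectral parameters $u_j,\uc_i$, together with signs from the $e^{\pm\e_{i,j}}$'s.

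The opposite product $\check{\mbG}_{\nu,N}(\bs\pc)\,\mbG_{\mu,M}(\bs p)$ has the same normal-ordered integrand and the same kernel, so the commutator reduces to an integral of this operator factor times the \emph{difference} of the two contraction factors. I would then deform the $y$-contours across the $x$-contours. By the quasi-periodicities \eqref{quasi1}, \eqref{quasi2} of $\mbA^{i,j},\mbB^{i,j}$ and the corresponding quasi-periodicities of $h_{\mu,M}$ and $\check h_{\nu,N}$, contributions from translates differing by $p,p^*,\pc,\pc^*$ cancel, so what remains is a sum of residues at the cross poles $x_{i,a}\sim y_{j,b}$ (up to the $q$-shifts dictated by \eqref{bb3}, \eqref{bb4}). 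The identifications $\bar p_i=\uc_i/\uc_{i-1}$ and $\bar\pc_l=u_l/u_{l-1}$ are precisely what is needed so that the spectral-parameter phases carried by the $V^{i,j}$'s and $\check V^{i,j}$'s match the elliptic phases of the kernels, so that the residues along each consecutive index string telescope to zero. The three remaining commutators follow from the same argument after $p\leftrightarrow p^*$ and/or $\pc\leftrightarrow\pc^*$, since $\mbE$ plays the $p^*$-analog of the role of $\mbF$.

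The main obstacle is the telescoping step. It reduces to a family of elliptic identities among products of $\Theta_p,\Theta_{p^*},\Theta_\pc,\Theta_{\pc^*}$ at shifted arguments, whose verification requires careful bookkeeping: signs from the fermionic zero modes $e^{\pm\e_{i,j}}$, branches of the infinite-product dressings defining $\mbK^\pm_i(z)$ and $\check{\mbK}^\pm_j(z)$, and the exact pole structure of the cross contractions. The genericity of $q,q_1,\qc_1,u_j,\uc_i$ then guarantees that contour deformation meets only the expected poles and that no denominator vanishes spuriously, so that the commutator vanishes on each weight subspace of $\mathbb{F}_{m,n}$.
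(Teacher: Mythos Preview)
Your high-level outline is in the right spirit, but there is a concrete misconception that would make the proof fail as written. You assert that the cross contractions between the dressed currents $\mbF^{i,j}(z)$ and $\check{\mbF}^{k,l}(w)$ assemble into ratios of $\Theta_p,\Theta_{p^*},\Theta_{\pc},\Theta_{\pc^*}$. They do not. A direct computation from \eqref{bb3}, \eqref{bb4} together with \eqref{Bdr1}, \eqref{Bdr2} shows that, although each dressing is an infinite product, the cross commutators $[\mbB^{i,j}_r,\check{\mbB}^{k,l}_{-r}]$ telescope so that the contraction of ${\mbF^{i,j}(z)}^{osc}$ with ${\check{\mbF}^{k,l}(w)}^{osc}$ is a single \emph{rational} factor such as $(1-\qc_1^{\,j}q_1^{-k}w/z)^{-1}$ (see the paper's Tables~\ref{FFc}, \ref{FcF}). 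Consequently there is no family of elliptic identities to verify, and your proposed step of cancelling ``translates differing by $p,p^*,\pc,\pc^*$'' has no counterpart in the actual pole structure: in each variable there is at most one simple cross pole, and the commutator produces a delta function directly (Lemma~\ref{EFcom}).

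The paper's mechanism for cancellation is therefore different from what you sketch. One does not compare two scalar contraction factors; instead one expands the commutator as a telescoping sum of $q$-commutators $[\mbF^{i,j_i}(x_i),\check{\mbF}^{k_l,l}(y_l)]_{q^c}$ (the extra $q$-phases come precisely from the zero-mode contractions in Lemma~\ref{cont-zer}, so the two orderings do \emph{not} share the same scalar prefactor). Each such $q$-commutator is a sum of two delta-function terms. For $1\le i\le m-1$, $1\le l\le n-1$ these cancel by the operator identity \eqref{FbFb} of Lemma~\ref{ZerRel}, which is a relation among normal-ordered products of currents rather than an identity among theta functions. For the boundary indices $i=m$ or $l=n$, one of the two delta terms lands outside the fundamental range; the cancellation is restored by shifting $x_i\to p\,x_i$ (respectively $y_l\to\pc\,y_l$), using the quasi-periodicity of $h_{\mu,M}$ in \eqref{hmu}, and this shift produces exactly the factor $p_i q^{-\ve_{i-1}+\ve_i}\cdots$ that matches the $\uc_i/\uc_{i-1}$ carried by the zero modes. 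That is where, and only where, the hypotheses $\bar p_i=\uc_i/\uc_{i-1}$ and $\bar\pc_l=u_l/u_{l-1}$ enter. Your proposal does not isolate either the operator identity \eqref{FbFb} or this boundary shift, and without them the residues will not telescope.
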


Proof of Theorem \ref{thm:IM} is done by direct computation similar to the one in  \cite{FKSW}, \cite{KS}. 
We collect the data necessary for the calculation  in Appendix \ref{sec:contraction}, 
and indicate the main steps of the proof in Appendix \ref{sec:cont}. 
\medskip

\begin{rem} {\rm
When one of the components of $\bs p$ is zero, namely, $\bar p=0$, see \eqref{pi}, \eqref{bspp}, 
the algebra of integrals of motion of $\E_k$ coincides 
with the Bethe algebra of the horizontal  $U_q\widehat{\gl}_k$ algebra.
The subspace of the top degree in the Fock module $\F_k^{(\nu)}(u;t,s)$ is preserved by the horizontal algebra and as $U_q\widehat{\gl}_k$ module is isomorphic to evaluation module associated to the $\nu$-th fundamental representation of $U_q{\gl}_k$. 
Thus Theorem \ref{thm:IM} implies the duality of $\gl_m$ and $\gl_n$  XXZ systems.  }
\end{rem}

\appendix

\section{Quantum toroidal algebra $\E_m$}\label{sec:Em}

In this section we give 
the definition of the algebra $\E_m=\E_m(q_1,q_2,q_3)$. 
Although we take $m\ge2$ in the main text, 
we include the definition for $m=1$ for completeness. 

Let $P$ be a free $\Z$ module with basis $\ve_i$, $i\in\Z/m\Z$, 
equipped with the inner product $(~,~):P\times P\to\Z$
such that $\ve_i$ are orthonormal. We set $\bar{\alpha}_i=\ve_{i-1}-\ve_{i}$.

Define further functions $g_{i,j}(z,w)$ by
\begin{align*}
m\ge 3&:\quad
g_{i,j}(z,w)=\begin{cases}
	      z-q_1w & (i\equiv j-1),\\
              z-q_2w & (i\equiv j),\\
	      z-q_3w & (i\equiv j+1),\\
              z-w & (i\not\equiv j,j\pm1),\\
	     \end{cases}\\
m=2&:\quad 
 g_{i,j}(z,w)=\begin{cases}
	      z-q_2w & (i\equiv j),\\
              (z-q_1w)(z-q_3w)& (i\not\equiv j),
	     \end{cases}\\
m=1&:\quad 
 g_{0,0}(z,w)=(z-q_1w)(z-q_2w)(z-q_3w),
\end{align*}
and set
$d_{i,j}=d^{\mp 1}$ ($i\equiv j\mp1, m\ge 3$), $=-1$ ($i\not\equiv j, m=2$), $=1$ (otherwise). 

By definition, $\E_m=\E_m(q_1,q_2,q_3)$ is a unital associative
algebra generated by $E_{i,k},F_{i,k},H_{i,r}$ 
and invertible elements $q^{h}$, $C$, $D$, 
where $i\in\Z/m\Z$, $k\in \Z$, $r\in\Z\backslash\{0\}$, $h\in P$.  
We write $K_i=q^{\bar\alpha_i}$. 
The defining relations are given in terms of generating series
\begin{align}
&E_i(z)=\sum_{k\in\Z}E_{i,k}z^{-k}, 
\quad
F_i(z)=\sum_{k\in\Z}F_{i,k}z^{-k},
\label{defEF}
\\
&K^{\pm}_i(z)=K_i^{\pm1}
\exp\Bigl(\pm(q-q^{-1})\sum_{r>0}H_{i,\pm r}z^{\mp r}\Bigr)\,. 
\label{defK}
\end{align}
The relations are as follows.
\bigskip

\noindent{\bf $C,K$ relations}
\begin{align*}
&\text{$C$ is central},\quad
q^{h}q^{h'}=q^{h+h'}\quad (h,h'\in P)\,,\quad
q^0=1\,,\quad
D q^h=q^h D\,,
\\
&q^{h}E_i(z)q^{-h}=
q^{(h,\bar\alpha_i)}E_i(z)\,,
\quad
q^{h}F_i(z)q^{-h}=
q^{-(h,\bar\alpha_i)}F_i(z)\,
\quad (h\in P)\,,
\\
&D E_i(z) D^{-1}=E_i(qz)\,,
\quad
D F_i(z) D^{-1}=F_i(qz)\,
\quad 
D K^{\pm}_i(z)D^{-1}=K^\pm_i(qz)\,,
\end{align*}
\bigskip

\noindent{\bf $H$-$E$ and $H$-$F$ relations}\quad
For $r\neq 0$, 
\begin{align*}
&[H_{i,r},E_j(z)]= a_{i,j}(r)C^{-(r+|r|)/2}  
\,z^r E_j(z)\,,
\\
&[H_{i,r},F_j(z)]=-a_{i,j}(r)C^{-(r-|r|)/2}   
\,z^r F_j(z)\,,
\\
&[H_{i,r},H_{j,s}]=\delta_{r+s,0} \cdot a_{i,j}(r)\,
\frac{C^r-C^{-r}}{q-q^{-1}}\,,
\end{align*}
where   
\begin{align*}
&a_{i,j}(r)
=\frac{[r]}{r}\
\Bigl(
(q^r+q^{-r})\delta^{(m)}_{i,j}-d^r\delta^{(m)}_{i,j-1}-d^{-r}\delta^{(m)}_{i,j+1}
\Bigr)\,.
\end{align*}
\bigskip

\noindent{\bf $E$-$F$ relations}
\begin{align*}
&[E_i(z),F_j(w)]=\frac{\delta_{i,j}}{q-q^{-1}}
(\delta\bigl(C\frac{w}{z}\bigr)K_i^+(w)
-\delta\bigl(C\frac{z}{w}\bigr)K_i^-(z))\,.
\end{align*}
\bigskip

\noindent{\bf $E$-$E$ and $F$-$F$ relations}
\begin{align*}
&[E_i(z),E_j(w)]=0\,, \quad [F_i(z),F_j(w)]=0\, \quad (i\not\equiv j,j\pm1)\,,
\\
&d_{i,j}g_{i,j}(z,w)E_i(z)E_j(w)+g_{j,i}(w,z)E_j(w)E_i(z)=0, 
\\
&d_{j,i}g_{j,i}(w,z)F_i(z)F_j(w)+g_{i,j}(z,w)F_j(w)F_i(z)=0.
\end{align*}
We omit the Serre relations which are not used in this paper.

\bigskip

The currents $E_i(z),F_i(z)$, $i\in\Z/m\Z$, 
together with $D$, generate 
a subalgebra $\U_m\subset \E_m$ 
isomorphic to the quantum affine algebra $U_q\widehat{\gl}_m$.
We call $\U_m$ the vertical subalgebra.

The elements $E_{i,0}, F_{i,0}, q^{h}$, $i\in\Z/m\Z$, $h\in P$, also generate 
a subalgebra isomorphic to the quantum affine algebra $U_q\widehat{\gl}_m$ which we call 
the horizontal subalgebra.

\bigskip
We use the following coproduct, which is {\it opposite} to the one used in \cite{FJM}:
\begin{align*}
&\Delta x=x\otimes x\quad (x=q^{\varepsilon_i}, C,D)\,,
\\
&\Delta E_i(z)=E_i(C_2z)\otimes K^{-}_i(z)+ 1\otimes E_i(z) \,,
\\
&\Delta F_i(z)=F_i(z)\otimes 1+K^{+}_i(z)\otimes F_i(C_1z) \,,
\\
&\Delta K^{+}_i(z)
=K^{+}_i(z)\otimes K^{+}_i(C_1z)\,,
\\
&\Delta K^{-}_i(z)
=K^{-}_i(C_2z)\otimes K^{-}_i(z)\,.
\end{align*}
Here $C_1=C\otimes 1$ and $C_2=1\otimes C$. 

Highest weight modules of $\E_m$ are defined in terms of generators 
$\theta^{-1}(E_i(z))$, $\theta^{-1}(F_i(z))$, 
$\theta^{-1}(K^\pm_i(z))$
obtained by applying Miki's automorphism $\theta$ which interchanges the vertical and horizontal 
subalgebras \cite{Mi}, see also \cite{FJMM}.
Let $\mathbf{P}= (P_0(z),\ldots,P_{m-1}(z))\in \C(z)^m$
be an $m$-tuple of rational functions which are regular at 
$z^{\pm1}=\infty$ and satisfy $P_i(0)P_i(\infty)=1$.
An $\E_m$ module is a highest weight module of highest weight $\mathbf{P}$
if it is generated by a vector $w$ satisfying
\begin{align*}
&\theta^{-1}(E_i(z))w=0\quad (i=0,1,\ldots,m-1)\,,\\ 
&\theta^{-1}(K^{\pm}_i(z))w=P_i(z)w\quad (i=0,1,\ldots,m-1)\,.
\end{align*}
In the last line, $P_i(z)$ stands for its expansion at $z^{\pm1}=\infty$. 

The following formulas are used to calculate highest weights of level one modules:
\begin{align*}
&\theta^{-1}\bigl(H_{i,1}\bigr)
=-(-d)^{-i}
[[\cdots[[\cdots[F_{0,0},F_{m-1,0}]_q\cdots,F_{i+1,0}]_q,F_{1,0}]_q\cdots F_{i-1,0}]_q,F_{i,0}]_{q^2},
\quad (1 \le i\le m-1)
\\
&\theta^{-1}\bigl(H_{0,1}\bigr)
=-(-d)^{-m+1}[[\cdots[F_{1,1},F_{2,0}]_q\cdots, F_{m-1,0}]_q,F_{0,-1}]_{q^2}\,,
\end{align*}
where $m\ge2$ and $[X,Y]_p=XY-pYX$.

\medskip

\section{Contractions}\label{sec:contraction}

In this section we present normal ordering rules 
for various currents. 
Each current is a product of an oscillator part and a zero mode part;
for instance, the oscillator part of $\mbE^{i,j}(z)$ 
is ${\mbE^{i,j}(z)}^{osc}=:e^{\mbA^{i,j}(z)}:$ and  the zero mode part
is $U^{i,j}(z)$. 
We say that a product of operators $X(z)Y(w)$ has contraction $f(z,w)$ 
if $X(z)Y(w)=f(z,w):X(z)Y(w):$.
We compute the contractions 
separately for the oscillator and the zero mode parts.

\subsection{Contractions for $\E_m$}\label{subsec:cont-Em}

\begin{lem}\label{cont-nondress}
Let $0\le i,k\le m-1$, $0\le j,l\le n-1$. The contractions of
the oscillator part of the non-dressed currents are given in 
Tables \ref{EEndr}--\ref{FEndr} below.
In all other cases the contraction is $1$.
\qed
\end{lem}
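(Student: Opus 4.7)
The plan is to prove the lemma by direct application of the bosonic Wick/normal-ordering formula, with the bulk of the work being explicit summation of the resulting geometric series and careful case analysis on the residue classes $i-k \bmod m$ and $j-l \bmod n$.

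First, I would invoke the standard contraction rule for normal-ordered exponentials of bosonic currents: whenever $\alpha(z)=\sum_{r\ne 0}\alpha_r z^{-r}$ and $\beta(w)=\sum_{r\ne 0}\beta_r w^{-r}$ are linear combinations of the oscillators with scalar commutators $[\alpha_r,\beta_{-r}]$, one has
\[
:e^{\alpha(z)}\!::\!e^{\beta(w)}:\; =\;
\exp\!\Bigl(\sum_{r>0}[\alpha_r,\beta_{-r}]\,(w/z)^r\Bigr)\,:e^{\alpha(z)+\beta(w)}:\,,
\]
so every oscillator contraction in the lemma is determined by the scalars $[\alpha_r,\beta_{-r}]$, which I would then convert to closed form using $\exp(-\sum_{r>0}x^r/r)=1-x$.

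Step 1: From definitions \eqref{Acur}--\eqref{Bcur} and the boson relation \eqref{bb1}, I would compute $[A^{i,j}_r,A^{k,l}_{-r}]$, $[B^{i,j}_r,B^{k,l}_{-r}]$, $[A^{i,j}_r,B^{k,l}_{-r}]$ and $[B^{i,j}_r,A^{k,l}_{-r}]$ mode by mode. The $(1-q_2^{\pm r})$ prefactor together with the telescoping sums over $t$ in \eqref{Acur}--\eqref{Bcur} yields, after $\delta^{(n)}_{j,t}$ is resolved, a clean scalar expression whose nonvanishing requires $j\equiv l\pmod n$ and $i\equiv k$ or $k\pm 1\pmod m$. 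Plugging into the exponential and splitting by Kronecker deltas, each $\delta^{(m)}_{i,k}$-summand produces a single logarithm $-\log(1-c\,w/z)$, which exponentiates back to the linear factors $(1-q_\alpha^{\pm 1}w/z)$ in the tables.

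Step 2: Repeat verbatim with $(\Ac^{i,j},\Bc^{i,j})$ using \eqref{Accur}--\eqref{Bccur} and \eqref{bb2}; this is formally obtained from Step 1 by the symmetry $i\leftrightarrow j$, $m\leftrightarrow n$, $q_1\leftrightarrow \qc_1$, $q_3\leftrightarrow\qc_3$, so no new computation is really required once the pattern is in hand.

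Step 3: For the cross contractions between a $b$-current and a $\bc$-current, I would use \eqref{bb3}--\eqref{bb4}, where the commutator factors as a product of an $m$-directional and an $n$-directional piece. This causes the exponential to reduce to a single factor of the form $(1-c\,w/z)^{\pm 1}$, reflecting the absence of a diagonal $(1+q_2^{-r})\delta_{i,k}\delta_{j,l}$ term in the $b$-$\bc$ bracket.

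Step 4: For all remaining $(i,j,k,l)$ combinations, I would simply observe that the Kronecker deltas forced by \eqref{bb1}--\eqref{bb4} cannot be simultaneously satisfied, so $[\alpha_r,\beta_{-r}]=0$ for every $r>0$, hence the contraction is $1$.

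The main obstacle is purely combinatorial: the asymmetry in \eqref{Acur}--\eqref{Bcur} between positive and negative modes (one contains a single $b^{i,j}_{\pm r}$ term, the other an additional truncated sum $\sum_{t=j+1}^{n-1}$ or $\sum_{t=0}^{j-1}$) means that the cancellations producing the final closed-form factors depend on whether $l>j$, $l=j$ or $l<j$, and likewise on the residue class $i-k\bmod m$. Organizing these boundary cases so that the telescoping of the $(1-q_2^{\pm r})$-sums is manifest, and verifying that the resulting exponents assemble precisely into the $q_1,q_2,q_3,\qc_1,\qc_3$-shifted factors $(1-c\,w/z)$ of the tables, is the most error-prone part of the argument.
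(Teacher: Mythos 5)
Your approach---the standard contraction formula $:e^{\alpha(z)}:\,:e^{\beta(w)}:\;=\exp\bigl(\sum_{r>0}[\alpha_r,\beta_{-r}](w/z)^r\bigr):e^{\alpha(z)+\beta(w)}:$ applied to the mode commutators computed from \eqref{Acur}--\eqref{Bcur} and \eqref{bb1}---is exactly the routine computation the paper intends (the lemma is stated with the proof omitted), and it does reproduce the tables. One sentence in your Step 1 is wrong as written, though: nonvanishing of $[A^{i,j}_r,A^{k,l}_{-r}]$ does \emph{not} require $j\equiv l\pmod n$. For $j>l$ the term $t=j$ of the truncated sum $\sum_{t=l+1}^{n-1}$ in $A^{k,l}_{-r}$ survives and produces the nontrivial third columns of Tables \ref{EEndr}--\ref{FFndr}; e.g.\ for $i\equiv k$ and $j>l$ one finds $[A^{i,j}_r,A^{i,l}_{-r}]=(q_2^{r}-q_2^{-r})/r$, which exponentiates to $(1-q_2^{-1}w/z)(1-q_2w/z)^{-1}$ as in the table. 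Since your closing paragraph correctly identifies the $j<l$, $j=l$, $j>l$ trichotomy as the crux, this is a self-contradictory slip rather than a structural gap, but the claim that $j\equiv l$ is required should be deleted. Finally, your Steps 2 and 3 (checked currents and $b$--$\bc$ cross contractions) pertain to Lemmas \ref{cont-auto-dress} and \ref{cont-dress} rather than to this lemma, which involves only the $b^{i,j}$ bosons and relation \eqref{bb1}.
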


%%%%%%%%%%%%%   EE ndr contraction %%%%%%%%%%%%%%%%%%%%%
\noindent 

\begin{table}[H]  
\begin{center}
\caption{${E^{ij}(z)}^{osc}{E^{kl}(w)}^{osc}$\label{EEndr}}
\begin{tabular}{|c|c|c|c|}
\hline
 & $j<l$ & $j=l$ & $j>l$\\
\hline
$i\equiv k$ & $1$
& $(1-w/z)(1-q_2^{-1}w/z)$
& $(1-q_2^{-1}w/z)(1-q_2 w/z)^{-1}$
\\
\hline
$i+1\equiv k$ & $1$
& $(1-q_1w/z)^{-1}$
& $(1-q_3^{-1}w/z)(1-q_1w/z)^{-1}$
\\
\hline
$i-1\equiv k$ & $1$
& $(1-q_3w/z)^{-1}$
& $(1-q_1^{-1}w/z)(1-q_3w/z)^{-1}$
\\
\hline
\end{tabular} 
\end{center}
\end{table}
\bigskip

%%%%%%%%%%%%%   FF ndr contraction %%%%%%%%%%%%%%%%%%%%%
\begin{table}[H]
\begin{center}
\caption{${F^{ij}(z)}^{osc}{F^{kl}(w)}^{osc}$\label{FFndr}}
\begin{tabular}{|c|c|c|c|}
\hline
 & $j<l$ & $j=l$ & $j>l$\\
\hline
$i\equiv k$ & $1$
& $(1-w/z)(1-q_2w/z)$
& $(1-q_2w/z)(1-q_2^{-1}w/z)^{-1}$
\\
\hline
$i+1\equiv k$ & $1$
& $(1-q_3^{-1}w/z)^{-1}$
& $(1-q_1w/z)(1-q_3^{-1}w/z)^{-1}$
\\
\hline
$i-1\equiv k$ & $1$
& $(1-q_1^{-1}w/z)^{-1}$
& $(1-q_3 w/z)(1-q_3^{-1}w/z)^{-1}$
\\
\hline
\end{tabular} 
\end{center}
\end{table}
\bigskip

%%%%%%%%%%%%%   EF ndr contraction %%%%%%%%%%%%%%%%%%%%%
\begin{table}[H] 
\begin{center}
\caption{${E^{ij}(z)}^{osc}{F^{kl}(w)}^{osc}$\label{EFndr}}
\begin{tabular}{|c|c|c|}
\hline
 &$j=l$ & $j\neq l$
\\
\hline
$i\equiv k$ & $(1-q^{-n+2j}w/z)^{-1}(1-q^{-n+2j+2}w/z)^{-1}$
& 1
\\
\hline
$i+1\equiv k$ 
& $1-q^{-n+2j}q_3^{-1}w/z$
&1
\\
\hline
$i-1\equiv k$ 
&  $1-q^{-n+2j}q_1^{-1}w/z$
&1
\\
\hline
\end{tabular} 
\end{center}
\end{table}
\bigskip

%%%%%%%%%%%%%   FE ndr contraction %%%%%%%%%%%%%%%%%%%%%
\begin{table}[H] 
\begin{center}
\caption{${F^{kl}(w)}^{osc}{E^{ij}(z)}^{osc}$\label{FEndr}}
\begin{tabular}{|c|c|c|}
\hline
 &$j=l$ & $j\neq l$
\\
\hline
$i\equiv k$ & $(1-q^{n-2j}z/w)^{-1}(1-q^{n-2j-2}z/w)^{-1}$ & 1 \\
\hline
$i+1\equiv k$ & $1-q^{n-2j}q_3z/w$ &1 \\
\hline
$i-1\equiv k$ &  $1-q^{n-2j}q_1z/w$ &1 \\
\hline
\end{tabular} 
\end{center}
\end{table}
\bigskip

%%%%%%%%%%%%%%%%%%%%%%%%%%%%%%%%%%%%%%%%%%%%%%%%%%%%%%%%%%%%%%%%%%

\begin{lem}\label{cont-auto-dress}
Let $0\le i,k\le m-1$, $0\le j,l\le n-1$. 
The contractions for the oscillator part of the dressed currents are given 
by Tables \ref{EE}--\ref{FE} below. 
In Table \ref{EE} we use $(z)_\infty=(z;p^*)_\infty$ while in 
Table \ref{FF} we use $(z)_\infty=(z;p)_\infty$. 
\qed
\end{lem}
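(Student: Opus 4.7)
The plan is to reduce the computation to the non-dressed contractions established in Lemma \ref{cont-nondress} and then identify the correction produced by the dressing. Recall from \eqref{Adr1}, \eqref{Bdr1} that the passage $A^{i,j}\to\mbA^{i,j}$ modifies only the negative Fourier modes (and dually $B^{i,j}\to\mbB^{i,j}$ only the positive ones), the correction being a linear combination of bosons $b^{i,j'}_{\pm r}$ weighted by the elliptic factor $(1-q_2^{\pm r})/(1-p^{*r})$ or $(1-q_2^{\mp r})/(1-p^r)$. Consequently, in every contraction only one end of the pair needs to be altered.

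First I would apply the standard contraction rule for normal-ordered exponentials: if $X(z)=\sum X_rz^{-r}$ and $Y(w)=\sum Y_sw^{-s}$ are linear in our bosons, then
\[
:\!e^{X(z)}\!::\!e^{Y(w)}\!: \;=\; \exp\Bigl(\sum_{r>0}[X_r,Y_{-r}]\,z^{-r}w^r\Bigr)\,:\!e^{X(z)}e^{Y(w)}\!:\,.
\]
Applied to $\mbE^{i,j}(z)^{osc}\mbE^{k,l}(w)^{osc}$, this expresses the dressed contraction as the product of the non-dressed contraction of Table \ref{EEndr} with an elliptic correction $\exp\bigl(\sum_{r>0}C_r(w/z)^r\bigr)$. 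By the commutation relations \eqref{bb1}, the coefficient $C_r$ is a sum of three terms proportional to $\delta^{(m)}_{i,k}$, $\delta^{(m)}_{i+1,k}$ and $\delta^{(m)}_{i-1,k}$, each carrying the elliptic weight $(1-q_2^{r})/\bigl(r(1-p^{*r})\bigr)$ multiplied by a suitable power of $\qc_3$ encoding the offset $j-l$.

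The key identity is a geometric-series reindexing. Expanding $1/(1-p^{*r})=\sum_{l\ge 0}p^{*rl}$ and using the relation $p^*=\qc_3^{-n}$ from \eqref{pppp} absorbs the sum over $l$ into the finite sum over $t$ in \eqref{Adr1}, turning $\sum_{t=0}^{n-1}\qc_3^{-tr}b^{i,j-t}_{-r}$ into the unrestricted sum $\sum_{t\ge 0}\qc_3^{-tr}b^{i,j-t}_{-r}$ (the $n$-periodicity in $j$ makes this consistent). Invoking $\sum_{r>0}z^r/r=-\log(1-z)$ then exponentiates the correction to an infinite product of factors of the form $(1-q_2^{\pm 1}p^{*l}w/z)^{\pm1}(1-p^{*l}w/z)^{\mp1}$, i.e. ratios of Pochhammer symbols $(\,\cdot\,;p^*)_\infty$ as in Table \ref{EE}. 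The $F$-$F$ case (Table \ref{FF}) is handled identically with $(p^*,\qc_3)$ replaced by $(p,\qc_1)$, and the mixed contractions in Tables \ref{EF}, \ref{FE} use \eqref{bb3}, \eqref{bb4} in the same way.

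The main obstacle will be the case-by-case bookkeeping. The three residue cases $i\equiv k$, $i+1\equiv k$, $i-1\equiv k$ from \eqref{bb1} and the three sub-cases $j<l$, $j=l$, $j>l$ in Table \ref{EEndr} determine which elementary factors survive from the non-dressed contraction before elliptization; one must verify in every such case that the geometric series fills in the missing factors to produce a symmetric infinite-product expression with no spurious finite leftover. The quasi-periodicities \eqref{quasi1}, \eqref{quasi2} of the dressed currents under $z\mapsto p^*z$ and $z\mapsto pz$ provide a convenient sanity check on the final entries of Tables \ref{EE}--\ref{FE}.
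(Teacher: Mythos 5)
Your overall strategy---contract the normal-ordered exponentials, write the dressed modes as the non-dressed ones plus an $H$-correction weighted by $(q-q^{-1})/(1-p^{*r})$ or $(q-q^{-1})/(1-p^{r})$, expand $1/(1-p^{*r})$ as a geometric series and resum $\sum_{r>0}x^{r}/r=-\log(1-x)$ into Pochhammer symbols---is exactly the direct computation that the paper leaves unwritten, and it does reproduce Tables \ref{EE} and \ref{FF} correctly. The genuine gap is your organizing premise that ``in every contraction only one end of the pair needs to be altered.'' That is false for Table \ref{FE}. In ${\mbF^{kl}(w)}^{osc}{\mbE^{ij}(z)}^{osc}$ the contraction exponent is $\sum_{r>0}[\mbB^{k,l}_{r},\mbA^{i,j}_{-r}]w^{-r}z^{r}$, and \emph{both} entries are modified by the dressing: writing $\mbB^{k,l}_{r}=B^{k,l}_{r}-\frac{q-q^{-1}}{1-p^{r}}H_{k,r}$ and $\mbA^{i,j}_{-r}=A^{i,j}_{-r}+\frac{q-q^{-1}}{1-p^{*r}}H_{i,-r}$, you obtain two single-correction terms \emph{and} a cross term proportional to $[H_{k,r},H_{i,-r}]=a_{k,i}(r)(C^{r}-C^{-r})/(q-q^{-1})$, which does not vanish because the representation has level $n$, $C=q^{n}$. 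The fact that the dressed $FE$ contraction nevertheless coincides with the non-dressed one of Table \ref{FEndr} is not automatic; it rests on the three-term cancellation
\begin{align*}
\frac{q^{nr}}{1-p^{*r}}-\frac{q^{-nr}}{1-p^{r}}
-\frac{q^{nr}-q^{-nr}}{(1-p^{r})(1-p^{*r})}
=\frac{-q^{nr}p^{r}+q^{-nr}p^{*r}}{(1-p^{r})(1-p^{*r})}=0\,,
\end{align*}
which holds precisely because $p^{*}=p\,q^{2n}$, i.e.\ $p=\bar pq^{-n}$, $p^{*}=\bar pq^{n}$ as in \eqref{pi}. Carried out as you describe it, your scheme would instead attach a single spurious elliptic factor to the entries of Table \ref{FE}.

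By contrast, Table \ref{EF} is genuinely trivial: there neither end is modified, since $\mbA^{i,j}_{r}=A^{i,j}_{r}$ and $\mbB^{k,l}_{-r}=B^{k,l}_{-r}$, so the table is literally Table \ref{EFndr}. So the correct bookkeeping is to separate the four orderings according to which of the two paired modes actually carry the dressing (one for $EE$, one for $FF$, none for $EF$, both for $FE$) and to supply the level-dependent cancellation above in the last case. The remaining ingredients of your argument---the reindexing of $\sum_{t=0}^{n-1}\qc_3^{-tr}b^{i,j-t}_{-r}$ into an infinite sum via $p^{*}=\qc_3^{-n}$ and $n$-periodicity, the resummation into $(\,\cdot\,;p^{*})_\infty$ and $(\,\cdot\,;p)_\infty$ ratios, and the case analysis over $i-k\bmod m$ and the sign of $j-l$---are sound and match what the paper's tables encode.
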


%%%%%%%%%%%%%   EE contraction %%%%%%%%%%%%%%%%%%%%%
\begin{table}[H] 
\begin{center}
\caption{${\mbE^{ij}(z)}^{osc}{\mbE^{kl}(w)}^{osc}$, 
$(z)_\infty=(z;p^*)_\infty$ \label{EE}}
\begin{tabular}{|c|c|c|c|}
\hline
 & $j<l$ & $j=l$ & $j>l$\\
\hline
$i\equiv k$ & $(q_2w/z)_\infty(q_2^{-1}w/z)_\infty^{-1}$
& $(1-w/z)(q_2w/z)_\infty (p^*q_2^{-1}w/z)_\infty^{-1}$
& $(p^*q_2w/z)_\infty (p^*q_2^{-1}w/z)_\infty^{-1}$
\\
\hline
$i+1\equiv k$ & $(q_1w/z)_\infty (q_3^{-1} w/z)_\infty^{-1}$
& $(p^*q_1w/z)_\infty (q_3^{-1}w/z)_\infty^{-1}$
& $(p^*q_1w/z)_\infty (p^*q_3^{-1}w/z)_\infty^{-1}$
\\
\hline
$i-1\equiv k$ & $(q_3w/z)_\infty (q_1^{-1} w/z)_\infty^{-1}$
& $(p^*q_3w/z)_\infty (q_1^{-1}w/z)_\infty^{-1}$
& $(p^*q_3w/z)_\infty (p^*q_1^{-1}w/z)_\infty^{-1}$
\\
\hline
\end{tabular} 
\end{center}
\end{table}
\bigskip

%%%%%%%%%%%%%   FF contraction %%%%%%%%%%%%%%%%%%%%%
\begin{table}[H]  
\begin{center}
\caption{${\mbF^{ij}(z)}^{osc}{\mbF^{kl}(w)}^{osc}$, $(z)_\infty=(z;p)_\infty$
\label{FF}}
\begin{tabular}{|c|c|c|c|}
\hline
 & $j<l$ & $j=l$ & $j>l$\\
\hline
$i\equiv k$ & $(q_2^{-1}w/z)_\infty (q_2w/z)_\infty^{-1}$
& $(1-w/z)(q_2^{-1}w/z)_\infty (p q_2w/z)_\infty^{-1}$
& $(pq_2^{-1}w/z)_\infty (pq_2w/z)_\infty^{-1}$
\\
\hline
$i+1\equiv k$ & $(q_3^{-1}w/z)_\infty (q_1 w/z)_\infty^{-1}$
& $(pq_3^{-1}w/z)_\infty (q_1w/z)_\infty^{-1}$
& $(pq_3^{-1}w/z)_\infty (pq_1w/z)_\infty^{-1}$
\\
\hline
$i-1\equiv k$ & $(q_1^{-1}w/z)_\infty (q_3 w/z)_\infty^{-1}$
& $(p q_1^{-1}w/z)_\infty (q_3w/z)_\infty^{-1}$
& $(p q_1^{-1} w/z)_\infty (p q_3w/z)_\infty^{-1}$
\\
\hline
\end{tabular} 
\end{center}
\end{table}
\bigskip

%%%%%%%%%%%%%   EF contraction %%%%%%%%%%%%%%%%%%%%%
\begin{table}[H]  
\begin{center}
\caption{${\mbE^{ij}(z)}^{osc}{\mbF^{kl}(w)}^{osc}$\label{EF}}
\begin{tabular}{|c|c|c|}
\hline
 &$j=l$ & $j\neq l$
\\
\hline
$i\equiv k$ & $(1-q^{-n+2j}w/z)^{-1}(1-q^{-n+2j+2}w/z)^{-1}$
& 1
\\
\hline
$i+1\equiv k$ 
& $1-q^{-n+2j}q_3^{-1}w/z$
&1
\\
\hline
$i-1\equiv k$ 
&  $1-q^{-n+2j}q_1^{-1}w/z$
&1
\\
\hline
\end{tabular} 
\end{center}
\end{table}
\bigskip

%%%%%%%%%%%%%   FE contraction %%%%%%%%%%%%%%%%%%%%%
\begin{table}[H]  
\begin{center}
\caption{${\mbF^{kl}(w)}^{osc}{\mbE^{ij}(z)}^{osc}$\label{FE}}
\begin{tabular}{|c|c|c|}
\hline
 &$j=l$ & $j\neq l$
\\
\hline
$i\equiv k$ & $(1-q^{n-2j}z/w)^{-1}(1-q^{n-2j-2}z/w)^{-1}$
& 1
\\
\hline
$i+1\equiv k$ 
& $1-q^{n-2j}q_3 z/w$
&1
\\
\hline
$i-1\equiv k$ 
&  $1-q^{n-2j}q_1 z/w$
&1
\\
\hline
\end{tabular} 
\end{center}
\end{table}
\bigskip

\begin{lem}\label{cont-zerEm}
Set
\begin{align*}
&\bar a^{(m)}_{i,k}=-\delta^{(m)}_{i-1,k}+2\delta^{(m)}_{i,k}
-\delta^{(m)}_{i+1,k} \,,
\\
&p^{(m)}_{i,k}=
i\bar a^{(m)}_{i,k}+
m\delta^{(m)}_{i,0}(\delta^{(m)}_{k,0}-\delta^{(m)}_{k,-1})\,.
\end{align*}
The contractions of the zero mode part are given as follows. 
\begin{align}
&U^{i,j}(z) U^{k,l}(w)=:U^{i,j}(z) U^{k,l}(w):
\label{UUcont}\\ 
&\quad\times 
(q^{n-j-1}z)^{\bar a^{(m)}_{i,k}\delta^{(n)}_{j,l}}
d^{-(1/2)(\delta^{(m)}_{i-1,k}-\delta^{(m)}_{i+1,k})
-p^{(m)}_{i,k}(1-\delta^{(n)}_{j,l})}
q^{-\theta(j<l)\,\bar a^{(m)}_{i,k}}\,,
\nn\\
&V^{i,j}(z) V^{k,l}(w)=:V^{i,j}(z) V^{k,l}(w):
\label{VVcont}\\ 
&\quad\times 
(q^{j}z)^{\bar a^{(m)}_{i,k}\delta^{(n)}_{j,l}}
d^{-(1/2)(\delta^{(m)}_{i-1,k}-\delta^{(m)}_{i+1,k})
-p^{(m)}_{i,k}(1-\delta^{(n)}_{j,l})}
q^{-\theta(j>l)\,\bar a^{(m)}_{i,k}}
\,,
\nn\\
&U^{i,j}(z) V^{k,l}(w)=:U^{i,j}(z) V^{k,l}(w):
\label{UVcont}\\ 
&\quad\times 
(q^{n-j-1}z)^{-\bar a^{(m)}_{i,k}\delta^{(n)}_{j,l}}
d^{(1/2)(\delta^{(m)}_{i-1,k}-\delta^{(m)}_{i+1,k})
+p^{(m)}_{i,k}(1-\delta^{(n)}_{j,l})}
q^{\theta(j<l)\,\bar a^{(m)}_{i,k}}\,,
\nn\\
&V^{k,l}(w) U^{i,j}(z)=:V^{k,l}(w) U^{i,j}(z):
\label{VUcont}\\ 
&\quad\times 
(q^l w)^{-\bar a^{(m)}_{i,k}\delta^{(n)}_{j,l}}
d^{-(1/2)(\delta^{(m)}_{i-1,k}-\delta^{(m)}_{i+1,k})
+p^{(m)}_{k,i}(1-\delta^{(n)}_{j,l})}
q^{\theta(j<l)\,\bar a^{(m)}_{i,k}}\,.
\nn
\end{align}
\qed
\end{lem}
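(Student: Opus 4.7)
The proof is a direct calculation from the explicit formulas \eqref{Fock4}--\eqref{Fock7}. The only tool required is the elementary identity
\begin{align*}
\partial_{a,b}\,e^{\pm\e_{c,d}}=e^{\pm\e_{c,d}}(\partial_{a,b}\pm\delta_{a,c}\delta_{b,d})\,,
\end{align*}
equivalently, a scalar power $\alpha^{c_1\partial_{a,b}}$ picks up the factor $\alpha^{\pm c_1\delta_{a,c}\delta_{b,d}}$ when pushed to the right past $e^{\pm\e_{c,d}}$. The anticommutation signs $e^{\e_{a,b}}e^{\e_{c,d}}=-e^{\e_{c,d}}e^{\e_{a,b}}$ are absorbed by the convention that normal ordering preserves the order of the $e^{\pm\e}$'s and hence do not enter the contraction function $f(z,w)$. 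Thus to verify \eqref{UUcont} it suffices to push the $\partial$-part of $U^{i,j}(z)$ past the zero-mode exponentials $e^{-\e_{k,l}}e^{\e_{k-1,l}}$ of $U^{k,l}(w)$; the $\partial$-part of $U^{k,l}(w)$ already sits at the far right, and the analogous statements hold for \eqref{VVcont}--\eqref{VUcont}.

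In the generic range $1\le i,k\le m-1$, the $z$-factor $(q^{n-1-j}z)^{\partial_{i-1,j}-\partial_{i,j}+1}$ of $U^{i,j}(z)$ crosses $e^{-\e_{k,l}}e^{\e_{k-1,l}}$ to yield $(q^{n-1-j}z)^{\bar a^{(m)}_{i,k}\delta^{(n)}_{j,l}}$, after using $\delta_{i-1,k-1}=\delta_{i,k}$ and $\delta_{i,k-1}=\delta_{i+1,k}$ to repackage $-\delta_{i-1,k}+\delta_{i,k}+\delta_{i-1,k-1}-\delta_{i,k-1}$ as $\bar a^{(m)}_{i,k}$. The ``local'' piece $d^{i(\partial_{i-1,j}-\partial_{i,j})}$ contributes $d^{i\bar a^{(m)}_{i,k}\delta^{(n)}_{j,l}}$ by the same identity, and the ``global'' piece $d^{(1/2-i)\sse_{i-1}+(1/2+i)\sse_i}$ (recall $\sse_a=\sum_t\partial_{a,t}$) contributes $d^{-(1/2-i)\delta_{i-1,k}-2i\delta_{i,k}+(1/2+i)\delta_{i+1,k}}$ independently of $\delta^{(n)}_{j,l}$. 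A short regrouping shows that the sum of the two $d$-contributions is exactly $-(1/2)(\delta^{(m)}_{i-1,k}-\delta^{(m)}_{i+1,k})-i\bar a^{(m)}_{i,k}(1-\delta^{(n)}_{j,l})$. Finally the tail $q^{-\sum_{t=j+1}^{n-1}(\partial_{i-1,t}-\partial_{i,t})}$ contributes nontrivially only when $l\in\{j+1,\dots,n-1\}$, i.e.\ when $\theta(j<l)=1$, producing $q^{-\theta(j<l)\bar a^{(m)}_{i,k}}$. Combining the three contributions proves \eqref{UUcont} in the generic range.

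The boundary case $i=0$ uses \eqref{Fock6}; the only new ingredient is the summand $m\partial_{m-1,j}$ in the $d$-exponent, which upon crossing $e^{-\e_{k,l}}e^{\e_{k-1,l}}$ contributes $d^{m\delta_{j,l}(\delta^{(m)}_{k,0}-\delta^{(m)}_{k,-1})}$, precisely the extra piece $m\delta^{(m)}_{i,0}(\delta^{(m)}_{k,0}-\delta^{(m)}_{k,-1})$ built into $p^{(m)}_{i,k}$. The formulas \eqref{VVcont}, \eqref{UVcont}, \eqref{VUcont} are obtained by the same procedure with notational changes only: $VV$ uses the base $q^{j}z$ and a tail ranging over $t\in\{0,\dots,j-1\}$, which converts $\theta(j<l)$ to $\theta(j>l)$ and reverses the sign of the bulk $d$- and $z$-contributions; $UV$ involves the zero modes $e^{-\e_{k-1,l}}e^{\e_{k,l}}$ of $V^{k,l}(w)$, so that each delta combination on the $z$- and $d$-factors flips sign; and \eqref{VUcont} is derived by pushing the $\partial$'s of the left-standing $V^{k,l}(w)$ past the zero-mode exponentials of $U^{i,j}(z)$, which replaces the base $q^{n-1-j}z$ by $q^{l}w$ as dictated by the exponent of $V^{k,l}(w)$. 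I expect no genuine obstacle beyond careful bookkeeping; the one delicate point is the correct treatment of the boundary summand $m\partial_{m-1,j}$ at $i=0$, which is $\partial$-like yet proportional to $m$ rather than to a $(1/2\pm i)$ coefficient, and which is exactly what the correction $\delta^{(m)}_{k,0}-\delta^{(m)}_{k,-1}$ in $p^{(m)}_{i,k}$ encodes.
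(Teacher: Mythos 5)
Your proof is correct and is exactly the direct zero-mode computation the paper intends (the lemma is stated there without proof as a straightforward calculation: push the scalar powers $\alpha^{\partial_{a,b}}$ past the $e^{\pm\e_{c,d}}$'s, with the anticommutation signs absorbed into the normal-ordering convention). The one slight imprecision is at $i=0$: besides the summand $m\partial_{m-1,j}$, the coefficient of $\sse_{m-1}$ also changes from $1/2$ to $1/2-m$, and it is the two changes together that produce the piece $-m(\delta^{(m)}_{k,0}-\delta^{(m)}_{k,-1})(1-\delta^{(n)}_{j,l})$ of the $d$-exponent encoded in $p^{(m)}_{i,k}$ --- which your final accounting nonetheless gets right.
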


\subsection{Contractions between $\E_m$ and $\check{\E}_n$}\label{subsec:cont-EmEcn}

\begin{lem}\label{cont-dress}
Let $0\le i,k\le m-1$, $0\le j,l\le n-1$. 
The contractions of the 
oscillator part of the dressed currents 
with the dual ones are given by 
Tables \ref{EEc}--\ref{FcE} below.   
In all other cases the contractions are $1$. 
\qed
\end{lem}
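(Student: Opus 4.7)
\textbf{Proof proposal for Lemma \ref{cont-dress}.}
The plan is to reduce the assertion to the standard Wick-type formula for normally-ordered exponentials and then carry out the resulting scalar sums using the dressing mechanism. Recall that if $X(z)=\,:e^{\mathbf{X}(z)}:$ and $Y(w)=\,:e^{\mathbf{Y}(w)}:$ are normally-ordered exponentials whose Fourier modes satisfy $[\mathbf{X}_r,\mathbf{Y}_s]\in\C$, then
\[
X(z)Y(w)=\exp\!\Bigl(\sum_{r>0}[\mathbf{X}_r,\mathbf{Y}_{-r}]\,z^{-r}w^r\Bigr)\,:\!X(z)Y(w)\!:\,.
\]
Thus it suffices to compute the scalar commutators $[\mbA^{i,j}_r,\check{\mbB}^{k,l}_{-r}]$, $[\check{\mbA}^{k,l}_r,\mbB^{i,j}_{-r}]$, $[\mbA^{i,j}_r,\check{\mbA}^{k,l}_{-r}]$, $[\mbB^{i,j}_r,\check{\mbB}^{k,l}_{-r}]$, and their counterparts with $E\leftrightarrow F$ roles swapped, then to sum the corresponding generating series in $r$.

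The first step is purely mechanical: substitute the explicit formulas \eqref{Adr1}--\eqref{Bdr1} for $\mbA^{i,j}_r,\mbB^{i,j}_{-r}$ and \eqref{Adr2}--\eqref{Bdr2} for $\check{\mbA}^{k,l}_r,\check{\mbB}^{k,l}_{-r}$, and apply the mixed-boson commutators \eqref{bb3}--\eqref{bb4}. Because $\mbA^{i,j}_{\pm r}$ and $\mbB^{i,j}_{\pm r}$ are built only from $b$-bosons, while their checked analogues are built only from $\bc$-bosons, every relevant commutator reduces to a single instance of \eqref{bb3} or \eqref{bb4}, and the outcome is a product of one factor $\delta^{(m)}_{\cdot,\cdot}$ (or a $q_1,q_3$-shifted difference thereof) coming from the $i$-index, with one factor $\delta^{(n)}_{\cdot,\cdot}$ (or a $\qc_1,\qc_3$-shifted difference) coming from the $j$-index. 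In particular, whenever the $i$-indices and the $j$-indices simultaneously fail to match any of the admissible patterns, all Kronecker deltas vanish, the commutator is zero, and the contraction is $1$; this disposes of the unlisted cases.

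For the cases recorded in Tables \ref{EEc}--\ref{FcE}, the sum over $t$ (respectively $s$) in the definitions of the dressed modes introduces the factor $(1-q_2^{\pm r})/(1-p^{\pm r})$ or $(1-q_2^{\pm r})/(1-\pc^{\pm r})$, after one uses the identities $p=\qc_1^n$, $p^*=\qc_3^{-n}$, $\pc=q_1^m$, $\pc^*=q_3^{-m}$ from \eqref{pppp} to collapse the geometric-like sums over the indices $t=0,\dots,n-1$ (resp. $s=0,\dots,m-1$) against the quasi-periodicities \eqref{quasi1}--\eqref{quasi2}. This produces Fourier coefficients of the form $\tfrac{1}{r}\cdot\frac{c_r z^{-r}w^r}{1-p^r}$ (or with $p^*,\pc,\pc^*$), and the resulting exponential $\exp\sum_{r>0}\tfrac{1}{r}\tfrac{c^r(w/z)^r}{1-p^r}$ telescopes via $\sum_{l\ge 0}p^{lr}$ into the infinite product $(cw/z;p)_\infty^{-1}$, giving exactly the $(z;p)_\infty$- or $(z;\pc)_\infty$-factors appearing in the tables.

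The routine part is Step~1 (substitution of the bracket formulas). The main obstacle will be the bookkeeping in Step~2: one must carefully pair the summation range with the residue classes of $i-k\bmod m$ and $j-l\bmod n$, track the powers of $q,q_2,q_3,\qc_1,\qc_3$ produced by the dressing shifts \eqref{Adr1}--\eqref{Bdr2}, and identify the arguments of $(\,\cdot\,;p)_\infty$ or $(\,\cdot\,;\pc)_\infty$ predicted by the tables. No Serre-type identity or Miki-duality input is needed; everything is forced by the bilinear relations \eqref{bb3}--\eqref{bb4} and the geometric resummation.
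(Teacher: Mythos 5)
Your overall framework is the right one and is essentially the direct computation the paper relies on: apply the Wick formula to the normally ordered exponentials, reduce everything to the scalar commutators of the Fourier modes via \eqref{bb3}--\eqref{bb4}, and resum the resulting series in $r$. Step 1 and your disposal of the unlisted cases are fine. However, your description of what the resummation produces is wrong for this particular lemma, and the error sits exactly at its one non-obvious point.

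You predict that the $\frac{1-q_2^{\pm r}}{1-p^{\pm r}}$ factors in \eqref{Adr1}--\eqref{Bdr2} survive into the mode commutators, so that the exponentials telescope into infinite products $(cw/z;p)_\infty^{\pm1}$. But Tables \ref{EEc}--\ref{FcE} contain no infinite products at all --- every entry is a single linear factor $(1-cw/z)^{\pm1}$. The reason is a cancellation you have missed: in the mixed commutators \eqref{bb3}--\eqref{bb4} \emph{both} the $i$-dependence and the $j$-dependence are differences of two Kronecker deltas. Consequently the weighted sum over the dressing index, e.g.
\begin{align*}
\sum_{t=0}^{n-1}\qc_3^{-tr}\bigl(\qc_3^{r}\delta^{(n)}_{j-t,l-1}-\delta^{(n)}_{j-t,l}\bigr)\,,
\end{align*}
telescopes: it vanishes identically unless $j\equiv l-1 \bmod n$, in which case it equals $\qc_3^{r}(1-\qc_3^{-nr})=\qc_3^{r}(1-p^{*r})$, and this cancels the denominator $1-p^{*r}$ exactly (similarly for the sums over $s$ against $1-\pc^{r}$, $1-\pc^{*r}$). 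Hence each mode commutator has the form $\frac{1}{r}c^{r}$ with no $1-p^{r}$ left in the denominator, and $\exp\sum_{r>0}\frac{1}{r}(cw/z)^{r}=(1-cw/z)^{-1}$. As a check, for $i\equiv k$, $j\equiv l$ one finds $[\mbA^{i,j}_r,\check{\mbA}^{k,l}_{-r}]=\frac{1}{r}\,q^{(m-n)r}\qc_3^{-jr}q_3^{kr}$, reproducing the first entry of Table \ref{EEc}. The infinite products you describe belong to Lemma \ref{cont-auto-dress} (Tables \ref{EE} and \ref{FF}), where the relevant commutator \eqref{bb1} carries only a single delta in the $j$-index, so no telescoping occurs and $1/(1-p^{*r})$ genuinely survives. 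As written, your Step 2 would lead you to the wrong form of the answer for the present lemma; once the cancellation is inserted, the rest of your plan goes through.
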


%%%%%%%%%%%%%   EEc contraction %%%%%%%%%%%%%%%%%%%%%
\begin{table}[H]  
\begin{center}
\caption{${\mbE^{ij}(z)}^{osc}{\check{\mbE}^{kl}(w)}^{osc}$\label{EEc}}
\begin{tabular}{|c|c|c|}
\hline
 &$j\equiv l$ &  $j\equiv l-1$ \\
\hline
$i\equiv k$ & $(1-q^{m-n}\qc_3^{-j}q_3^k w/z)^{-1}$
&  $1-q^{m-n-2}\qc_3^{-j-1}q_3^k w/z$
\\
\hline
$i-1\equiv k$ 
& $1-q^{m-n+2}\qc_3^{-j}q_3^{k+1} w/z$
&  $(1-q^{m-n}\qc_3^{-j-1}q_3^{k+1}w/z)^{-1}$
\\
\hline
\end{tabular} 
\end{center}
\end{table}
\medskip

\begin{table}[H]  
\begin{center}
\caption{${\check{\mbE}^{kl}(w)}^{osc}{\mbE^{ij}(z)}^{osc}$\label{EcE}}
\begin{tabular}{|c|c|c|}
\hline
 &$j\equiv l$ &  $j\equiv l-1$ \\
\hline
$i\equiv k$ & $(1-q^{-m+n}\qc_3^{j}q_3^{-k} z/w)^{-1}$
&  $1-q^{-m+n+2}\qc_3^{j+1}q_3^{-k} z/w$
\\
\hline
$i-1\equiv k$ 
& $1-q^{-m+n-2}\qc_3^{j}q_3^{-k-1} z/w$
&  $(1-q^{-m+n}\qc_3^{j+1}q_3^{-k-1} z/w)^{-1}$
\\
\hline
\end{tabular} 
\end{center}
\end{table}

\bigskip

%%%%%%%%%%%%%   FFc contraction %%%%%%%%%%%%%%%%%%%%%
\begin{table}[H] 
\begin{center}
\caption{${\mbF^{ij}(z)}^{osc}\check{\mbF}^{kl}(w)^{osc}$\label{FFc}}
\begin{tabular}{|c|c|c|}
\hline
 &$j\equiv l$ &  $j\equiv l-1$ \\
\hline
$i\equiv k$ & $(1-\qc_1^{j}q_1^{-k} w/z)^{-1}$
&  $1-q^{2}\qc_1^{j+1}q_1^{-k} w/z$
\\
\hline
$i-1\equiv k$ 
& $1-q^{-2}\qc_1^{j}q_1^{-k-1} w/z$
&  $(1-\qc_1^{j+1}q_1^{-k-1}w/z)^{-1}$
\\
\hline
\end{tabular} 
\end{center}
\end{table}
\medskip

\begin{table}[H] 
\begin{center}
\caption{$\check{\mbF}^{kl}(w)^{osc}{\mbF^{ij}(z)}^{osc}$\label{FcF}}
\begin{tabular}{|c|c|c|}
\hline
 &$j\equiv l$ &  $j\equiv l-1$ \\
\hline
$i\equiv k$ & $(1-\qc_1^{-j}q_1^{k} z/w)^{-1}$
&  $1-q^{-2}\qc_1^{-j-1}q_1^{k} z/w$
\\
\hline
$i-1\equiv k$ 
& $1-q^{2}\qc_1^{-j}q_1^{k+1} z/w$
&  $(1-\qc_1^{-j-1}q_1^{k+1} z/w)^{-1}$
\\
\hline
\end{tabular} 
\end{center}
\end{table}
\bigskip

%%%%%%%%%%%%%   EFc contraction %%%%%%%%%%%%%%%%%%%%%
\begin{table}[H]  
\begin{center}
\caption{${\mbE^{ij}(z)}^{osc}{\check{\mbF}^{kl}(w)}^{osc}$\label{EFc}}
\begin{tabular}{|c|c|c|}
\hline
 &$j\equiv l$ &  $j\equiv l-1$ \\
\hline
$i\equiv k$ & $1-q^{-n+2}\qc_3^{-j}q_1^{-k} w/z$
&  $(1-q^{-n}\qc_3^{-j-1}q_1^{-k} w/z)^{-1}$
\\
\hline
$i-1\equiv k$ 
& $(1-q^{-n}\qc_3^{-j}q_1^{-k-1} w/z)^{-1}$
&  $1-q^{-n-2}\qc_3^{-j-1}q_1^{-k-1}w/z$
\\
\hline
\end{tabular} 
\end{center}
\end{table}
\bigskip

%%%%%%%%%%%%%   FEc contraction %%%%%%%%%%%%%%%%%%%%%
\begin{table}[H]  
\begin{center}
\caption{${\check{\mbF}^{kl}(w)}^{osc}{\mbE^{ij}(z)}^{osc}$\label{FcE}}
\begin{tabular}{|c|c|c|}
\hline
 &$j\equiv l$ &  $j\equiv l-1$ \\
\hline
$i\equiv k$ & $1-q^{n-2}\qc_3^{j}q_1^{k} z/w$
&  $(1-q^{n}\qc_3^{j+1}q_1^{k} z/w)^{-1}$
\\
\hline
$i-1\equiv k$ 
& $(1-q^{n}\qc_3^{j}q_1^{k+1} z/w)^{-1}$
&  $1-q^{n+2}\qc_3^{j+1}q_1^{k+1} z/w$
\\
\hline
\end{tabular} 
\end{center}
\end{table}
\bigskip

\begin{lem}\label{cont-zer}
Let $0\le i,k\le m-1$ and $0\le j,l\le n-1$, 
and set
\begin{align}
D^{(m)}_{i,k}=\delta^{(m)}_{i,k}-\delta^{(m)}_{i-1,k}\,,\quad 
D^{(n)}_{j,l}=\delta^{(n)}_{j,l}-\delta^{(n)}_{j,l-1}\,.
\label{Dmn}
\end{align}
The contractions of the zero mode part are given as follows. 
\begin{align*}
U^{i,j}(z) \Uc^{k,l}(w)&=:U^{i,j}(z) \Uc^{k,l}(w):\\
&
\times
(q^{n-1-j}z)^{-D^{(m)}_{i,k}D^{(n)}_{j,l}}
d^{(-k\delta^{(m)}_{i,k}+(k+1)\delta^{(m)}_{i-1,k})D^{(n)}_{j,l}}
q^{D^{(m)}_{i,k}(\delta^{(n)}_{j,l-1}-\delta^{(n)}_{0,l})
}\,,
\\
\Uc^{k,l}(w)U^{i,j}(z) &=: \Uc^{k,l}(w)U^{i,j}(z):\\
&
\times
(q^{m-1-k}w)^{-D^{(m)}_{i,k}D^{(n)}_{j,l}}
\dc^{\,D^{(m)}_{i,k}
(-j\delta^{(n)}_{j,l}+(j+1)\delta^{(n)}_{j,l-1})}
q^{(\delta^{(m)}_{i-1,k}-\delta^{(m)}_{i,0})D^{(n)}_{j,l}
}\,,
\\
V^{i,j}(z) \Vc^{k,l}(w)&=:V^{i,j}(z) \Vc^{k,l}(w):\\
&
\times
(q^{j}z)^{-D^{(m)}_{i,k}D^{(n)}_{j,l}}
d^{(-k\delta^{(m)}_{i,k}+(k+1)\delta^{(m)}_{i-1,k})D^{(n)}_{j,l}}
q^{-D^{(m)}_{i,k}(\delta^{(n)}_{j,l}-\delta^{(n)}_{0,l})
} \,,
\\
\Vc^{k,l}(w)V^{i,j}(z) &=: \Vc^{k,l}(w)V^{i,j}(z):\\
&
\times
(q^{k}w)^{-D^{(m)}_{i,k}D^{(n)}_{j,l}}
\dc^{\,D^{(m)}_{i,k}(-j\delta^{(n)}_{j,l}+(j+1)\delta^{(n)}_{j,l-1})}
q^{-(\delta^{(m)}_{i,k}-\delta^{(m)}_{i,0})D^{(n)}_{j,l}
}\,,
\\
U^{i,j}(z)\Vc^{k,l}(w) &=:U^{i,j}(z)\Vc^{k,l}(w):\\
&\times
(q^{n-1-j}z)^{D^{(m)}_{i,k}D^{(n)}_{j,l}}
d^{(k\delta^{(m)}_{i,k}-(k+1)\delta^{(m)}_{i-1,k})D^{(n)}_{j,l}}
q^{-D^{(m)}_{i,k}(\delta^{(n)}_{j,l-1}-\delta^{(n)}_{0,l})
}\,,
\\
\Vc^{k,l}(w)U^{i,j}(z) &=: \Vc^{k,l}(w)U^{i,j}(z):\\
&
\times
(q^{k}w)^{D^{(m)}_{i,k}D^{(n)}_{j,l}}
\dc^{\,D^{(m)}_{i,k}
(j\delta^{(n)}_{j,l}-(j+1)\delta^{(n)}_{j,l-1})}
q^{(\delta^{(m)}_{i,k}-\delta^{(m)}_{i,0})D^{(n)}_{j,l}
} \,.
\end{align*}
\qed
\end{lem}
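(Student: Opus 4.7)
The proof is a direct normal-ordering computation. Every one of the operators $U^{i,j}(z),V^{i,j}(z),\Uc^{k,l}(w),\Vc^{k,l}(w)$ factors as an ordered product of $e^{\pm\e}$'s followed by an exponential of a linear combination of $\partial$'s, $\sse$'s, and $\ssec$'s. Since the normal-ordering convention leaves the order of the $e^{\pm\e}$'s unchanged while placing $\partial$'s to the right of all $e^{\pm\e}$'s, the contraction in a product $X(z)Y(w)$ reduces to commuting the scalar-base exponentials of $X(z)$ through the two-factor $e^{\pm\e}$-prefix of $Y(w)$.

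The only tool needed is the elementary identity
\[
x^{\partial_{a,b}}\,e^{\pm\e_{c,d}}
=x^{\pm\delta^{(m)}_{a,c}\delta^{(n)}_{b,d}}\,e^{\pm\e_{c,d}}\,x^{\partial_{a,b}}
\qquad (x\in\C^\times),
\]
together with its consequences $x^{\sse_a}e^{\pm\e_{c,d}}=x^{\pm\delta^{(m)}_{a,c}}e^{\pm\e_{c,d}}x^{\sse_a}$ and $x^{\ssec_b}e^{\pm\e_{c,d}}=x^{\mp\delta^{(n)}_{b,d}}e^{\pm\e_{c,d}}x^{\ssec_b}$, which follow from the definitions $\sse_a=\sum_t\partial_{a,t}$, $\ssec_b=-\sum_s\partial_{s,b}$. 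I apply these termwise to each scalar base $x\in\{q^{n-1-j}z,\,q^{m-1-k}w,\,d,\,\dc,\,q\}$ in the exponentials of $X(z)$ as it crosses the $e^{\pm\e}$-prefix of $Y(w)$.

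Take $U^{i,j}(z)\Uc^{k,l}(w)$ with $1\le i\le m-1$ and $1\le l\le n-1$ as the prototype: the prefix of $\Uc^{k,l}(w)$ is $e^{\e_{k,l}}e^{-\e_{k,l-1}}$. The $z$-factor $(q^{n-1-j}z)^{\partial_{i-1,j}-\partial_{i,j}+1}$ crosses this prefix and produces
\[
(q^{n-1-j}z)^{(\delta^{(m)}_{i-1,k}-\delta^{(m)}_{i,k})(\delta^{(n)}_{j,l}-\delta^{(n)}_{j,l-1})}
=(q^{n-1-j}z)^{-D^{(m)}_{i,k}D^{(n)}_{j,l}},
\]
which matches the claimed $z$-factor. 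In the $d$-exponent of $U^{i,j}(z)$ the $\sse$-pieces contribute nothing (the shifts from $e^{\e_{k,l}}$ and from $e^{-\e_{k,l-1}}$ cancel), while the $i(\partial_{i-1,j}-\partial_{i,j})$ piece contributes $d^{-iD^{(m)}_{i,k}D^{(n)}_{j,l}}$; for $1\le i\le m-1$ one rewrites $-iD^{(m)}_{i,k}=-k\delta^{(m)}_{i,k}+(k+1)\delta^{(m)}_{i-1,k}$, which is the stated $d$-factor. A parallel bookkeeping of $q^{-\sum_{t=j+1}^{n-1}(\partial_{i-1,t}-\partial_{i,t})}$ against $e^{\e_{k,l}}e^{-\e_{k,l-1}}$ produces $q^{D^{(m)}_{i,k}\delta^{(n)}_{j,l-1}}$.

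The residual $-\delta^{(n)}_{0,l}$ correction in the $q$-exponent, and the proper interpretation of $-iD^{(m)}_{i,k}$ at $i=0$, arise from the boundary formulas \eqref{Fock6} for $U^{0,j}$ and \eqref{dFock6} for $\Uc^{i,0}$: these carry extra terms $m\partial_{m-1,j}$ and $-n\partial_{i,n-1}$ respectively, and the $q$-sum ranges are likewise modified. Substituting them into the same calculation produces precisely the $\delta^{(m)}_{i,0}$ and $\delta^{(n)}_{0,l}$ boundary corrections. The remaining five products $\Uc^{k,l}(w)U^{i,j}(z)$, $V^{i,j}(z)\Vc^{k,l}(w)$, $\Vc^{k,l}(w)V^{i,j}(z)$, $U^{i,j}(z)\Vc^{k,l}(w)$, $\Vc^{k,l}(w)U^{i,j}(z)$ follow by the same recipe, using that $V$ arises from $U$ by swapping $e^{-\e_{i,j}}e^{\e_{i-1,j}}\leftrightarrow e^{-\e_{i-1,j}}e^{\e_{i,j}}$, inverting $d$, and reversing the sign of the $z$-exponent (and analogously $\Vc\leftrightarrow\Uc$). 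The argument is conceptually trivial; the main obstacle is the bookkeeping, namely enumerating the four combinations $(i\equiv k\text{ or }i{-}1\equiv k)\times(j\equiv l\text{ or }j\equiv l{-}1)$ together with the two independent boundary alternatives $i=0$ and $l=0$, and matching the $z$, $w$, $d$, $\dc$, $q$ exponents against the stated formulas in each case.
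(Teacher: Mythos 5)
Your proposal is correct: the paper states Lemma \ref{cont-zer} without proof, treating it as exactly the routine normal-ordering computation you describe, and your prototype case checks out (in particular the identification $-iD^{(m)}_{i,k}=-k\delta^{(m)}_{i,k}+(k+1)\delta^{(m)}_{i-1,k}$ for $1\le i\le m-1$, the vanishing of the $\sse$-contributions, and the boundary corrections $\delta^{(m)}_{i,0}$, $\delta^{(n)}_{0,l}$ coming from \eqref{Fock6} and \eqref{dFock6}). This is the same approach the paper intends; the remaining five products are indeed only bookkeeping.
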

\bigskip

The following Lemmas will be used in Section \ref{sec:cont}.
They can be proved by a straightforward calculation
using \eqref{bbc-rel2},\eqref{bbc-rel1} or
the definition \eqref{Fock4}--\eqref{Fock7} and \eqref{dFock4}--\eqref{dFock7}. 

\begin{lem}\label{ZerRel-ndr}
Assume that $1\le i\le m-1$ and $1\le l \le n-1$. 
Then we have
\begin{align}
&:E^{i,l}(z)\check{E}^{i,l}(w):+
q^{-2}:E^{i,l-1}(z)\check{E}^{i-1,l}(w):
=0
\quad 
 \text{if $w=q^{-m+n}q_3^{-i}\check{q}_3^{l}z$}\,,
\label{relUU}\\
&:F^{i,l}(z)\check{F}^{i,l}(w):+
q^{2}:F^{i,l-1}(z)\check{F}^{i-1,l}(w):
=0
\quad 
\text{if $w=q_1^{i}\check{q}_1^{-l}z$}\,,
\label{relVV}\\
&:E^{i,l}(z)\check{F}^{i-1,l}(w):+
q^{-2}:E^{i,l-1}(z)\check{F}^{i,l}(w):=0
\quad 
\text{if $w=q^{n}q_1^{i}\check{q}_3^{l}z$}\,.
\label{relUV}
\end{align}
\qed
\end{lem}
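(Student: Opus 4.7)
The plan is to treat all three identities by the same two-step decomposition: write each $:X(z)Y(w):$ as a product of an oscillator factor $:e^{(\cdot)(z)+(\cdot)(w)}:$ and a zero-mode factor, then verify that the oscillator factors agree and that the zero-mode factors differ by the stated scalar once $w$ is specialized. I would first handle \eqref{relUU} in detail; the identities \eqref{relVV} and \eqref{relUV} follow the same pattern with $B,\check{B},V,\check{V}$ (or a hybrid pair) replacing $A,\check{A},U,\check{U}$.

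For the oscillator factor of \eqref{relUU}, substitute $w=q^{-m+n}q_3^{-i}\check{q}_3^{l}z$ into the Fourier series $A^{i,l}(z)+\check{A}^{i,l}(w)$ read off from \eqref{Acur} and \eqref{Accur}. The prefactors then align so that the coefficient of each $z^{-r}$ ($r>0$) becomes a scalar multiple of $b^{i,l}_r+\check{b}^{i,l}_r$; by the same manipulation the coefficient in $A^{i,l-1}(z)+\check{A}^{i-1,l}(w)$ becomes the matching scalar multiple of $\check{q}_3^r b^{i,l-1}_r+q_3^r\check{b}^{i-1,l}_r$, and the two agree by \eqref{bbc-rel2}. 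The negative-frequency case is more intricate because of the tail sums in $A^{\bullet,\bullet}_{-r}$ and $\check{A}^{\bullet,\bullet}_{-r}$, but after taking the difference these tails telescope, and the surviving identity reduces via \eqref{bbc-rel1} together with the trivial consequences $q_2^rq_1^r=q_3^{-r}$ and $q_2^r\check{q}_1^r=\check{q}_3^{-r}$ of $q_1q_2q_3=1$.

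For the zero-mode factor, I would use \eqref{Fock4} and \eqref{dFock4} to check that the two products $U^{i,l}(z)\check{U}^{i,l}(w)$ and $U^{i,l-1}(z)\check{U}^{i-1,l}(w)$ implement the same weight shift, so that their quotient is a scalar function of $z,w$ and the $\partial_{s,t}$'s together with an overall sign coming from the anticommutations of the $e^{\pm\epsilon_{s,t}}$'s. After specializing $w=q^{-m+n}q_3^{-i}\check{q}_3^{l}z$, all $\partial$-dependent exponents of $z,w,d,\check{d},q$ cancel, and what remains combines with the sign to produce the claimed scalar $-q^{-2}$. The same recipe, with \eqref{Fock5} and \eqref{dFock5} replacing \eqref{Fock4} and \eqref{dFock4}, handles \eqref{relVV} and produces $-q^{2}$; for \eqref{relUV} one pairs \eqref{Fock4} with \eqref{dFock5}, again giving $-q^{-2}$.

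The main obstacle will be the accounting in the zero-mode step: one must track the contributions of $\partial_{i,l}$, $\partial_{i-1,l}$, $\partial_{i,l-1}$, $\partial_{i-1,l-1}$ (plus the global $\textsf{e}_i$, $\check{\textsf{e}}_l$ operators) to the $z$-, $w$-, $d$-, $\check{d}$- and $q$-exponents, and propagate the $e^{\pm\epsilon_{s,t}}$'s past each of these factors, keeping track of every shift this induces. Once these exponents are tabulated, the cancellation at the specialized value of $w$ is a routine numerical check.
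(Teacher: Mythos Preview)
Your proposal is correct and follows essentially the same approach indicated by the paper, which simply remarks that the lemma is a straightforward calculation using the linear relations \eqref{bbc-rel2}, \eqref{bbc-rel1} for the oscillator part and the explicit zero-mode formulas \eqref{Fock4}--\eqref{Fock7}, \eqref{dFock4}--\eqref{dFock7}. Your two-step decomposition into oscillator and zero-mode factors, with the former reduced to \eqref{bbc-rel2}--\eqref{bbc-rel1} after the specialization of $w$ and the latter to a scalar bookkeeping in the $U^{i,j}$, $\check U^{i,j}$ (resp.\ $V$, $\check V$), is exactly the intended computation.
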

\bigskip

\begin{lem}\label{ZerRel}
Define
\begin{align*}
&\mbE^{i,-1}(z)=\mbE^{i,n-1}(p^{*}z)\,
\dc^{-n}q^{-\sse_{i-1}+\sse_i}\,,
\\
&\check{\mbE}^{-1,l}(z)=\check{\mbE}^{m-1,l}(\check{p}^{*}z)\,
d^{-m}q^{-\ssec_{l-1}+\ssec_l}\,,
\\
&\mbF^{i,-1}(z)=\mbF^{i,n-1}(p z)\,
\dc^{-n}q^{-\sse_{i-1}+\sse_i}\,,
\\
&\check{\mbF}^{-1,l}(z)=
\check{\mbF}^{m-1,l}(\pc z)\,
d^{-m}
q^{-\ssec_{l-1}+\ssec_l}\,.
\end{align*}
Then for $0\le i\le m-1$, $0\le l\le n-1$ we have 
\begin{align}
&:\mbE^{i,l}(z)\check{\mbE}^{i,l}(w):+
q^{-2}:\mbE^{i,l-1}(z)\check{\mbE}^{i-1,l}(w):
=0
\quad 
 \text{if $w=q^{-m+n}q_3^{-i}\check{q}_3^{l}z$}\,,
\label{EbEb}\\
&:\mbF^{i,l}(z)\check{\mbF}^{i,l}(w):+
q^{2}:\mbF^{i,l-1}(z)\check{\mbF}^{i-1,l}(w):
=0
\quad 
\text{if $w=q_1^{i}\check{q}_1^{-l}z$}\,,
\label{FbFb}\\
&:\mbE^{i,l}(z)\check{\mbF}^{i-1,l}(w):+
q^{-2}:\mbE^{i,l-1}(z)\check{\mbF}^{i,l}(w):=0
\quad 
\text{if $w=q^{n}q_1^{i}\check{q}_3^{l}z$}\,.
\label{EbFb}
\end{align}
\qed
\end{lem}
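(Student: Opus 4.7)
The plan is to leverage Lemma \ref{ZerRel-ndr}, the non-dressed version of the identity, and verify that the dressing modifications preserve the relation. The starting point is the observation that the zero-mode factors $U^{i,l}$, $V^{i,l}$, $\Uc^{i,l}$, $\Vc^{i,l}$ are identical in the dressed and non-dressed currents, and comparing \eqref{Acur}--\eqref{Bccur} with \eqref{Adr1}--\eqref{Bdr2} one sees that $\mbA^{i,j}_r=A^{i,j}_r$ for $r>0$ (and analogously for $\mbB,\check{\mbA},\check{\mbB}$). Thus both the positive-oscillator-mode identity and the zero-mode identity that underlie the non-dressed statement carry over verbatim to the dressed case. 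Only the negative-mode oscillator identity needs fresh verification.

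Concretely, I would write $:\mbE^{i,l}(z)\check{\mbE}^{i,l}(w): = :e^{\mbA^{i,l}(z)+\check{\mbA}^{i,l}(w)}:\,U^{i,l}(z)\Uc^{i,l}(w)$, and similarly for the companion term, then aim to establish equality of oscillator exponents
\begin{equation*}
\mbA^{i,l}(z)+\check{\mbA}^{i,l}(w) = \mbA^{i,l-1}(z)+\check{\mbA}^{i-1,l}(w)
\text{ at } w=q^{-m+n}q_3^{-i}\qc_3^l z,
\end{equation*}
with the zero-mode ratio contributing the prefactor $-q^{-2}$ (and analogously for \eqref{FbFb}, \eqref{EbFb}). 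The positive-mode part of this equality is Lemma \ref{ZerRel-ndr} itself. For the negative modes, I would substitute the explicit formula \eqref{Adr1} and its dual \eqref{Adr2}, collect the $b^{i,j-t}_{-r}$ and $\bc^{i-s,l}_{-r}$ tails, and use the linear relation \eqref{bbc-rel1} to rewrite the $\bc$-tail as a $b$-tail (plus a compensating single-term shift). The boundary cases $i=0$, $l=0$ are handled by the auxiliary definitions of $\mbE^{i,-1}$, $\check{\mbE}^{-1,l}$, etc., which are designed precisely so that the quasi-periodicities \eqref{quasi1}--\eqref{quasi2} identify them with $\mbE^{i,n-1}$, $\check{\mbE}^{m-1,l}$ evaluated at shifted arguments; the same argument then applies.

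The main obstacle is the negative-mode computation, because the dressing prefactor $(1-q_2^r)/(1-p^{*r})$ depends nontrivially on $r$ and the sums $\sum_{t=0}^{n-1}\qc_3^{-tr} b^{i,j-t}_{-r}$ mix all $n$ sites in the $j$-direction (and analogously $m$ sites in the $i$-direction for the $\bc$ tail). The saving feature is that the special point $w=q^{-m+n}q_3^{-i}\qc_3^{l}z$ is chosen so that the $w$-dependence of $\check{\mbA}^{i,l}_{-r}(w)$ becomes proportional to $\qc_3^{lr}q_3^{-ir}\cdot q^{(n-m)r}z^r$, which matches $\qc_3^{lr}$ from $\mbA^{i,l}_{-r}(z)$ and supplies exactly the exponent shift needed for the tails to combine via \eqref{bbc-rel1}. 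After this rearrangement one obtains the desired cancellation. Finally I would verify the zero-mode ratio via Lemma \ref{cont-zer} and Lemma \ref{cont-zerEm}, which at the specific point produces the factor $-q^{-2}$ (resp.\ $-q^2$) for \eqref{EbEb} (resp.\ \eqref{FbFb}), and similarly for \eqref{EbFb}, completing the proof.
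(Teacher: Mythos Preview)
Your proposal is correct and follows essentially the same approach as the paper. The paper's own proof is simply the remark that the lemma ``can be proved by a straightforward calculation using \eqref{bbc-rel2}, \eqref{bbc-rel1} or the definition \eqref{Fock4}--\eqref{Fock7} and \eqref{dFock4}--\eqref{dFock7}''; your outline is a more structured version of exactly this computation, organized by first inheriting the positive-mode and zero-mode identities from Lemma \ref{ZerRel-ndr} (since $\mbA^{i,j}_r=A^{i,j}_r$ for $r>0$ and the zero modes are unchanged by dressing), then verifying the negative-mode identity directly via \eqref{bbc-rel1}, and finally treating the boundary cases $i=0$ or $l=0$ through the quasi-periodicity \eqref{quasi1}--\eqref{quasi2} encoded in the auxiliary definitions. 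One minor remark: the zero-mode ratio is obtained not from the contraction Lemmas \ref{cont-zer}, \ref{cont-zerEm} themselves (which relate a product to its normal ordering) but from the explicit formulas \eqref{Fock4}--\eqref{Fock7}, \eqref{dFock4}--\eqref{dFock7}, exactly as the paper indicates; this is already implicit in Lemma \ref{ZerRel-ndr} for interior indices and only needs a direct check at the boundaries.
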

\medskip

\section{Proof of Theorem \ref{affine-com}}\label{sec:affine-com}
In this section we prove Theorem \ref{affine-com}. 
We begin with
\begin{lem}\label{HHc} 
Assume that $1\le i\le m-1$, $1\le l\le n-1$ and $r\neq 0$. Then 
\begin{align*}
[b^{i,t}_{r}, \check{H}_{t',-r}]=0\,,\quad [H_{s,r}, \bc^{s',l}_{-r}]=0
\quad (0\le s,s'\le m-1,  0\le t,t'\le n-1).
\end{align*}
\end{lem}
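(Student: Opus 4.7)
The proof will be a purely algebraic calculation based on the bilinear commutation relations \eqref{bb1}--\eqref{bb4} and the explicit expressions \eqref{Fock1}, \eqref{dFock1} for the Heisenberg modes $H_{i,r}$ and $\check{H}_{j,r}$ as weighted sums of the elementary bosons $b^{i,j}_{\pm r}, \bc^{k,l}_{\pm r}$.

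My plan is to treat first the identity $[H_{s,r},\bc^{s',l}_{-r}]=0$ in the case $r>0$. I will substitute the definition $H_{s,r}=\sum_{t=0}^{n-1}\qc_1^{tr}b^{s,t}_{r}$ and apply \eqref{bb3}, so that each summand factorises as a product of an $m$-factor $q_3^r\delta^{(m)}_{s-1,s'}-\delta^{(m)}_{s,s'}$ depending only on $(s,s')$ and an $n$-factor $\qc_1^r\delta^{(n)}_{t,l-1}-\delta^{(n)}_{t,l}$. Pulling the $t$-independent $m$-factor outside the sum reduces the computation to the one-variable sum
\[
\sum_{t=0}^{n-1}\qc_1^{tr}\bigl(\qc_1^r\delta^{(n)}_{t,l-1}-\delta^{(n)}_{t,l}\bigr).
\]
Under the hypothesis $1\le l\le n-1$, both $l-1$ and $l$ lie in $\{0,\dots,n-1\}$, so the periodic Kronecker symbols act as ordinary deltas, and the sum telescopes to $\qc_1^{(l-1)r}\cdot\qc_1^r-\qc_1^{lr}=0$. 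For $r<0$ the same scheme works with \eqref{bb3} replaced by \eqref{bb4} and the role of $\qc_1$ played by $\qc_3$.

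Next, I would deal with the companion identity $[b^{i,t}_r,\check{H}_{t',-r}]=0$ by the same method with the roles of $\E_m$ and $\check{\E}_n$ exchanged: expanding $\check{H}_{t',-r}=\sum_{k=0}^{m-1}q^{-(m-1)r}q_1^{-kr}\bc^{k,t'}_{-r}$ via \eqref{dFock1}, applying \eqref{bb3}, and isolating the $m$-sum
\[
\sum_{k=0}^{m-1} q_1^{-kr}\bigl(q_3^r\delta^{(m)}_{i-1,k}-\delta^{(m)}_{i,k}\bigr).
\]
The hypothesis $1\le i\le m-1$ again ensures that no affine wrap-around from the periodicity of $\delta^{(m)}$ occurs, and the cancellation is then completed by invoking the fundamental relation $q_1q_2q_3=1$ together with the normalising prefactor $q^{-(m-1)r}$ present in \eqref{dFock1}. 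The case $r<0$ is handled in exactly the same way using \eqref{bb4} instead of \eqref{bb3}.

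The main obstacle will be purely bookkeeping: one must carefully track the $m$- and $n$-periodic Kronecker symbols $\delta^{(m)}, \delta^{(n)}$ and verify that the stated ranges $1\le i\le m-1$, $1\le l\le n-1$ precisely rule out the wrap-around contributions, which would otherwise introduce extra factors of $p=\qc_1^n$ or $\check{p}=q_1^m$ and spoil the telescoping.
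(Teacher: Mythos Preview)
Your approach is exactly that of the paper: expand $H$ (resp.\ $\check H$) as a weighted sum of $b$'s (resp.\ $\bc$'s), insert \eqref{bb3} or \eqref{bb4}, pull out the factor that does not depend on the summation index, and observe that the remaining one-variable sum telescopes. For $[H_{s,r},\bc^{s',l}_{-r}]$ your computation is correct and identical to the paper's displayed calculation.

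There is, however, a genuine gap in your treatment of the companion identity $[b^{i,t}_r,\check H_{t',-r}]=0$. You correctly isolate (for $r>0$) the $m$-sum
\[
S=\sum_{k=0}^{m-1} q_1^{-kr}\bigl(q_3^r\delta^{(m)}_{i-1,k}-\delta^{(m)}_{i,k}\bigr),
\]
but then assert that it vanishes ``by invoking $q_1q_2q_3=1$ together with the normalising prefactor $q^{-(m-1)r}$.'' This is not true. With $1\le i\le m-1$ the periodic deltas reduce to ordinary ones and one finds
\[
S=q_1^{-(i-1)r}q_3^r-q_1^{-ir}=q_1^{-ir}\bigl(q_1^rq_3^r-1\bigr)=q_1^{-ir}\bigl(q_2^{-r}-1\bigr)\neq 0.
\]
The overall factor $q^{-(m-1)r}$ is independent of $k$ and so cannot contribute to any cancellation. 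The telescoping that worked for the $n$-sum relied on the weight $\qc_1^{tr}$ matching the shift factor $\qc_1^r$ in \eqref{bb3}; here the weight $q_1^{-kr}$ from \eqref{dFock1} does \emph{not} match the shift factor $q_3^r$, and the same obstruction occurs for $r<0$ with \eqref{bb4}. You need to redo this step rather than argue by analogy: either carry out the computation honestly with the formula \eqref{dFock1} as written and see what actually happens, or re-examine whether that formula is the one you should be using.
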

\begin{proof}
Suppose $r>0$. Using \eqref{bb3} we compute 
\begin{align*}
[H_{s,r},\bc^{s',l}_{-r}]=-\frac{[r]^2}{r}q^{r}
\bigl(q_3^r\delta^{(m)}_{s-1,s'}-\delta^{(m)}_{s,s'}\bigr)
\sum_{j=0}^{n-1}
\bigl(
\qc_1^{(j+1)r}
\delta^{(n)}_{j,l-1}-
\qc_1^{jr}
\delta^{(n)}_{j,l}\bigr)=0\,,
\end{align*}
where we use $1\le l\le n-1$. The other cases are similar.
\end{proof}
\medskip

\begin{lem}\label{cont-dress-nondress}
Assume that 
$1\le i,k\le m-1$, $1\le j,l\le n-1$. 
Then formulas for the contractions in Tables \ref{EE}--\ref{FE} 
hold true if we replace
${\mbE^{i,j}(z)}^{osc}$, ${\mbF^{i,j}(z)}^{osc}$, 
${\check{\mbE}^{k,l}(w)}^{osc}$, ${\check{\mbF}^{k,l}(w)}^{osc}$
by
${E^{i,j}(z)}^{osc}$, ${F^{i,j}(z)}^{osc}$, 
${\check{E}^{k,l}(w)}^{osc}$, ${\check{F}^{k,l}(w)}^{osc}$, 
respectively.
\end{lem}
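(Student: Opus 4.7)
The plan is to check, table by table in \ref{EEc}--\ref{FcE}, that the oscillator contraction computed with the dressed currents coincides with the one computed with the non-dressed currents whenever $1\le i,k\le m-1$ and $1\le j,l\le n-1$. The zero-mode factors $U,V,\Uc,\Vc$ are the same in both cases, so only the oscillator parts are at issue, and each contraction takes the form $\exp([X^+(z),Y^-(w)])$ for a single pairing of positive modes of one factor with negative modes of the other.

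First I would put the four dressing corrections into closed form. Manipulating \eqref{Acur}, \eqref{Adr1} via the substitution $s=j-t\bmod n$ and using $\qc_3^{-nr}=p^{*r}$ gives
\begin{align*}
\mbA^{i,j}_{-r}-A^{i,j}_{-r}=-\frac{q^{-r}}{[r]}\cdot\frac{1-q_2^r}{1-p^{*r}}\,H_{i,-r}\,,
\end{align*}
so the correction is proportional to $H_{i,-r}$; similarly $\mbB^{i,j}_r-B^{i,j}_r$ is proportional to $H_{i,r}$. The parallel computation on the dual side produces
\begin{align*}
\check{\mbA}^{k,l}_{-r}-\check{A}^{k,l}_{-r}=-\frac{q^{(m-2)r}}{[r]}\cdot\frac{1-q_2^r}{1-\pc^{*r}}\sum_{s=0}^{m-1}q_3^{sr}\bc^{s,l}_{-r}\,,
\end{align*}
while $\check{\mbB}^{k,l}_r-\check{B}^{k,l}_r$ is a scalar multiple of $\sum_s q_1^{sr}\bc^{s,l}_r$. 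The structural point is that the $\E_m$-side corrections depend only on the first index $i$ (through $H_{i,\pm r}$), while the $\check{\E}_n$-side corrections depend only on the second index $l$.

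Two cancellation identities then do the work. Lemma \ref{HHc} gives $[H_{s,\pm r},\bc^{s',l}_{\mp r}]=0$ for $1\le l\le n-1$, so the $\E_m$-side corrections commute with every oscillator mode of the four dual currents. A companion pair of identities, established by the same telescoping argument from \eqref{bb3}, \eqref{bb4}, reads
\begin{align*}
\Bigl[b^{i,j}_{r},\;\sum_{s=0}^{m-1}q_3^{sr}\bc^{s,l}_{-r}\Bigr]=0\,,\qquad
\Bigl[b^{i,j}_{-r},\;\sum_{s=0}^{m-1}q_1^{sr}\bc^{s,l}_{r}\Bigr]=0
\quad\text{for }1\le i\le m-1\,,
\end{align*}
because in each case the $s$-sum collapses at the two successive indices $s=i-1,i$, both of which lie in $\{0,\dots,m-1\}$ precisely when $i\neq 0$. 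Hence the $\check{\E}_n$-side corrections commute with every relevant $b^{i,j}_{\pm r}$ in the admissible range.

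Assembly is then pure bookkeeping. Table \ref{EFc} is immediate, since its contraction involves only the positive modes of $\mbA^{i,j}$ and the negative modes of $\check{\mbB}^{k,l}$, neither of which is modified by dressing. For each of \ref{EEc}, \ref{EcE}, \ref{FFc} and \ref{FcF} exactly one of the two factors in the relevant commutator is dressed, and writing it as \emph{non-dressed plus correction} makes the correction contribute nothing by one of the identities above. Table \ref{FcE} has both factors dressed, so one expands the three cross-terms and kills each either by Lemma \ref{HHc} (using $1\le l\le n-1$) or by the telescoping identity (using $1\le i\le m-1$). The main obstacle is not any single computation but keeping the asymmetric bookkeeping straight: the vertical corrections cancel via Heisenberg modes on the $\E_m$-side, while the horizontal corrections cancel via a different linear combination of auxiliary bosons on the $\check{\E}_n$-side, and each individual commutator must be matched with the right identity.
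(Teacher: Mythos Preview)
Your proposal is correct and follows the paper's own line: write the difference between dressed and non-dressed Fourier modes as a Cartan-type correction and show that it commutes with the oscillator modes on the other side via the telescoping identities of Lemma~\ref{HHc}. You are in fact more explicit than the paper, which compresses the $\check{\E}_n$-side into ``similarly for those with checks''; your direct computation of $\check{\mbA}^{k,l}_{-r}-\check A^{k,l}_{-r}\propto\sum_s q_3^{sr}\bc^{s,l}_{-r}$ and the companion telescoping identities $[b^{i,j}_{\pm r},\sum_s q_{3}^{\,sr}\bc^{s,l}_{-r}]=[b^{i,j}_{-r},\sum_s q_{1}^{\,sr}\bc^{s,l}_{r}]=0$ for $1\le i\le m-1$ make that step precise.
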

\begin{proof}
By the definition we have
\begin{align*}
&A_r^{i,j}=\mbA^{i,j}_r\,,\quad
A^{i,j}_{-r}=\mbA^{i,j}_{-r}-\frac{q-q^{-1}}{1-p^{*\,r}}H_{i,-r}\,,
\\
&B^{i,j}_r=\mbB^{i,j}_r+\frac{q-q^{-1}}{1-p^{r}}H_{i,r}\,,
\quad
B^{i,j}_{-r}=\mbB^{i,j}_{-r}\,,
\end{align*}
and similarly for those with checks. 
Hence the assertion follows from 
Lemma \ref{HHc}. 
\end{proof}
\medskip

From Lemma \ref{HHc} we obtain the commutativity
\begin{align*}
&[H_{i,r},\check{H}_{l,r'}]=0\,,\\
&[H_{i,r},\check{E}_{l}(w)]=[H_{i,r},\check{F}_{l}(w)]=0\,,\\
&[E_i(z),\check{H}_{l,r}]=[F_i(z),\check{H}_{l,r}]=0\,
\end{align*}
provided $i, l\neq 0$. 
To show
Theorem \ref{affine-com} it remains to check the following. 
\begin{lem}\label{EFEcFc}
Assuming  $1\le i\le m-1$, $1\le l\le n-1$ 
we have 
\begin{align*}
&[E_i(z),\Ec_l(w)]=0\,, 
\quad [F_i(z),\Fc_l(w)]=0\,, \\
&[E_i(z),\Fc_l(w)]=0\,,\quad [\Ec_i(z),F_l(w)]=0\,.  
\end{align*} 
\end{lem}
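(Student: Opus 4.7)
Since $1\le i\le m-1$ and $1\le l\le n-1$, the Kronecker deltas $\delta_{i,0},\delta_{l,0}$ vanish, so all $u,\check u$ prefactors in \eqref{Fock2}, \eqref{Fock3}, \eqref{dFock2}, \eqref{dFock2} drop out: $E_i(z)=\sum_{j=0}^{n-1}E^{i,j}(z)$, $\check E_l(w)=\sum_{k=0}^{m-1}\check E^{k,l}(w)$, and similarly for $F_i,\check F_l$. Each commutator expands as a double sum of contributions $[E^{i,j}(z),\check E^{k,l}(w)]$ and the like. My plan is to compute each such contribution from the contraction tables: by Lemma \ref{cont-dress-nondress} applied to the oscillator parts, combined with Lemma \ref{cont-zer} for the zero-mode parts, each product $E^{i,j}(z)\check E^{k,l}(w)$ factors as (scalar contraction)$\,\cdot\,:\!E^{i,j}(z)\check E^{k,l}(w)\!:$, with an analogous formula for the reversed ordering.

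Reading Tables \ref{EEc} and \ref{EcE}, for fixed $(i,l)$ the oscillator contraction is trivial unless $(j,k)\in\{(l,i),(l,i-1),(l-1,i),(l-1,i-1)\}\pmod{(n,m)}$. Of these four combinations, only $(j,k)=(l,i)$ and $(j,k)=(l-1,i-1)$ produce a simple pole, and crucially both poles lie at the \emph{same} location $z/w=q^{m-n}q_3^i\check q_3^{-l}$; the other two yield polynomial contractions. Using the standard delta-function identity
\[
(1-aw/z)^{-1}-(1-a^{-1}z/w)^{-1}=\delta(aw/z),
\]
the polar parts of the commutator become residues supported on this single pole, and Lemma \ref{ZerRel-ndr}, relation \eqref{relUU}, asserts precisely the linear dependence
\[
:\!E^{i,l}(z)\check E^{i,l}(w)\!: + q^{-2}:\!E^{i,l-1}(z)\check E^{i-1,l}(w)\!:=0
\]
at that pole. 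Once one verifies that the zero-mode contraction factors in Lemma \ref{cont-zer} produce exactly the weight $q^{-2}$ relating the two residues, the polar contributions cancel. For the non-polar pairs $(l,i-1)$ and $(l-1,i)$, the contractions in Tables \ref{EEc}, \ref{EcE} are polynomial and, combined with the matching zero-mode factors, agree between the two orderings, so they cancel directly in the commutator.

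The remaining three commutators are handled by the same scheme: $[F_i(z),\check F_l(w)]=0$ uses \eqref{relVV} together with the contractions in Tables \ref{FFc} and \ref{FcF}; $[E_i(z),\check F_l(w)]=0$ uses \eqref{relUV} with Tables \ref{EFc} and \ref{FcE}; and $[\check E_i(z),F_l(w)]=0$ follows from the $(E,\check E)\leftrightarrow (F,\check F)$ mirror of the previous case, pairing the two simple-pole combinations whose normal-ordered products are related on the pole. The main obstacle throughout is purely technical bookkeeping: verifying that the many powers of $d,\check d,q$ and the cocycle signs coming from the $e^{\pm\varepsilon_{i,j}}$ operators in Lemma \ref{cont-zer} combine with the oscillator contractions to reproduce exactly the coefficients $q^{\pm 2}$ appearing in \eqref{relUU}--\eqref{relUV}. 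Once this is checked, the pole-cancellation argument applies uniformly and yields the lemma.
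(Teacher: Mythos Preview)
Your approach is essentially the same as the paper's: expand into double sums of $[E^{i,j}(z),\check E^{k,l}(w)]$, compute contractions of oscillator and zero-mode parts, observe that only the two pairs $(j,k)=(l,i)$ and $(l-1,i-1)$ produce a delta function supported at the common pole $w=q^{-m+n}q_3^{-i}\check q_3^{\,l}z$, and cancel them using \eqref{relUU}--\eqref{relUV} of Lemma \ref{ZerRel-ndr}. The paper does exactly this, writing out the resulting commutators explicitly as \eqref{cancelEE}--\eqref{cancelEF} and then summing over $j,k$; your version describes the same mechanism more qualitatively (polar versus non-polar pairs) but defers the explicit coefficient check, which the paper also largely leaves implicit.
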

\begin{proof}
Lemma \ref{cont-nondress} and Lemma \ref{cont-zer} 
allow us to compute 
the commutators of $E^{i,j}(z),F^{i,j}(z)$ with 
$\Ec^{k,l}(w),\Fc^{k,l}(w)$.  
In each case only two terms survive, with the result
\begin{align}
&[E^{i,j}(z),\Ec^{k,l}(w)]=
\delta\Bigl(q^{m-n}q_3^{i}\qc_3^{-l}\frac{w}{z}\Bigr)
(q^{m-1-i}\dc^{\,l} w)^{-1}
\label{cancelEE}\\
&\quad \times \Bigl(
\delta_{i,k}\delta_{j,l}:E^{i,l}(z)\Ec^{i,l}(w):
+
\delta_{i-1,k}\delta_{j,l-1} q^{-2}:E^{i,l-1}(z)\Ec^{i-1,l}(w):
\Bigr)\,,
\nn\\
&[F^{i,j}(z),\Fc^{k,l}(w)]=
\delta\Bigl(q_1^{-i}\qc_1^{l}\frac{w}{z}\Bigr)(q^{i-1}\dc^{\,l} w)^{-1}
\label{cancelFF}\\
&\quad\times
\Bigl(q^{-2}\delta_{i,k}\delta_{j,l}:F^{i,l}(z)\Fc^{i,l}(w):
+
\delta_{i-1,k}\delta_{j,l-1} :F^{i,l-1}(z)\Fc^{i-1,l}(w):
\Bigr)\,,
\nn\\
&[E^{i,j}(z),\Fc^{k,l}(w)]=
\delta\Bigl(q^{-n}q_1^{-i}\qc_3^{-l}\frac{w}{z}\Bigr)
(q^{i-1}\dc^{\,l}w)^{-1}
\label{cancelEF}\\
&\quad\times
\Bigl(
\delta_{i-1,k}\delta_{j,l}:E^{i,l}(z)\Fc^{i-1,l}(w):
+
q^{-2}\delta_{i,k}\delta_{j,l-1}:E^{i,l-1}(z)\Fc^{i,l}(w):
\Bigr)\,.
\nn
\end{align}
Using Lemma \ref{ZerRel-ndr}
and summing over $j,k$, we obtain 
the desired equalities.
\end{proof}
\medskip

\section{Commutativity of $\mbG_{\mu,M}(\bs p)$ and 
$\check{\mbG}_{\nu,N}(\bs \pc)$
}\label{sec:cont}
In this section we prove Theorem \ref{thm:IM}.  
Since the working is all very similar, 
we illustrate the proof for the simplest integrals of motion of the first kind, namely 
\begin{align*}
&\mbG_{\mu,1}(\bs p) =
\int\!\!\cdots\!\!\int_C \prod_{s=1}^{m} 
\frac{dx_{s}}{2\pi\sqrt{-1}\,x_{s}}\,
\mbF_1(x_{1})\cdots \mbF_{m}(x_{m})\,
h_{\mu,1}(x_{1},\ldots,x_{m};\bs p)\,,
\\
&\check{\mbG}_{\nu,1}(\bs \pc) =
\int\!\!\cdots\!\!\int_{\check{C}} \prod_{t=1}^{n} 
\frac{dy_{t}}{2\pi\sqrt{-1}\,y_{t}}\,
\check{\mbF}_1(y_{1})\ldots \check{\mbF}_{n}(y_{n})\,
\check{h}_{\nu,1}(y_1,\ldots,y_n;\bs \pc)\,.
\end{align*}

\subsection{Contours}\label{contours}
The integration cycles $C,\check{C}$ are specified as follows. 

For $\mbG_{\mu,1}(\bs p)$, 
the poles of the integrand come from the denominators of 
$h_{\mu,1}(x_{1},\ldots,x_{m};\bs p)$ 
given in \eqref{hmu}, 
as well as from the contractions given in Table \ref{FF}.
Altogether the integrand takes the form
\begin{align*}
&\mbF_1(x_{1})\cdots \mbF_{m}(x_{m})\,
h_{\mu,1}(x_{1},\ldots,x_{m};\bs p)
 \\
&=\sum_{j_1,\ldots,j_m}
:\mbF^{1,j_1}(x_{1})\cdots \mbF^{m,j_m}(x_{m}):
\frac{\varphi_{j_1,\ldots,j_m}(x_1,\ldots,x_m)}
{\prod_{j=1}^m(q_1x_{j+1}/x_j,q_3x_j/x_{j+1};p)_\infty}
\end{align*}
with some holomorphic function $\varphi_{j_1,\ldots,j_m}(x_1,\ldots,x_m)$.
Here we set $x_0=x_m$ and $x_{m+1}=x_1$. 
For each $i$, the poles with respect to the variable $x_i$ consist of
two groups, 
\begin{align*}
&p^kq_3x_{i-1}\,,\quad p^kq_1x_{i+1}\quad (k\ge0)\,,\\
&p^{-k}q_1^{-1}x_{i-1}\,,\quad p^{-k}q_3^{-1}x_{i+1}\,, \quad (k\ge0)\,.
\end{align*}
The cycle $C$ is such that $x_i$ 
encircles the first group separating it from the second, see Figure \ref{Fig0}
below:

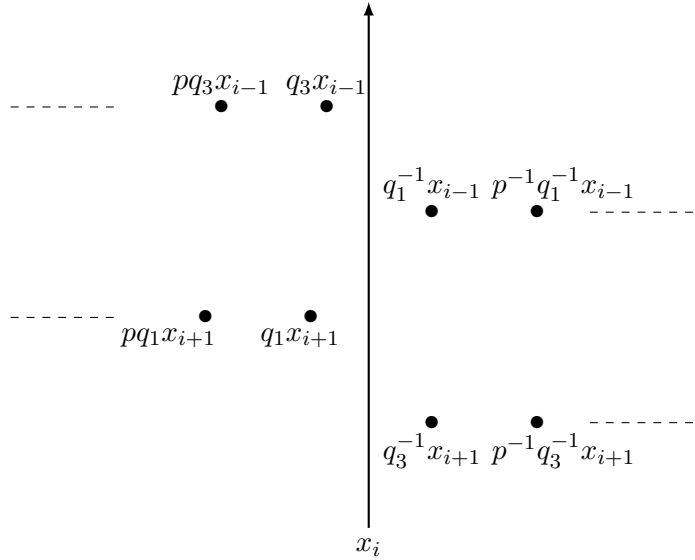
\begin{figure}[H]
%%%%%%%%%%% * *  *  %%%%%%%%%%%%%%%%%%%
\begin{center}
\begin{tikzpicture}[scale=0.7]

\draw[dashed] (-9,5)--(-7,5);
\node at (-5,5){$\bullet$};
\node at (-3,5){$\bullet$};
\node  [above] at (-5,5){$pq_3x_{i-1}$};
\node [above] at (-3,5){$q_3x_{i-1}$}; 

\node at (-1,3){$\bullet$};
\node at (1,3){$\bullet$};
\draw[dashed] (2,3)--(4,3);
\node  [above] at  (-1,3) {\small $q_1^{-1}x_{i-1}$};
\node  [above] at  (1.5,3) {\small $p^{-1}q_1^{-1}x_{i-1}$};

\draw[dashed] (-9,1)--(-7,1);
\node at (-5.3,1){$\bullet$};
\node at (-3.3,1){$\bullet$};
\node  [below] at  (-6,1) {\small $pq_1x_{i+1}$};
\node  [below] at  (-3.5,1) {\small $q_1x_{i+1}$};

\node at (-1,-1){$\bullet$};
\node at (1,-1){$\bullet$};

\draw[dashed] (2,-1)--(4,-1);
\node  [below] at  (-1,-1) {\small $q_3^{-1}x_{i+1}$};
\node  [below] at  (1.5,-1) {\small $p^{-1}q_3^{-1}x_{i+1}$};

\node [below] at (-2.2,-3) {\small $x_i$};
\draw [-latex,thick] (-2.2,-3)--(-2.2,7);
\end{tikzpicture}
\caption{The contour in the $x_i$ plane. \label{Fig0}}
\end{center}
\end{figure}

Similarly, 
$\check{C}$ is such that $y_l$
encircles $\pc^k\qc_3y_{l-1}$, $\pc^k\qc_1y_{l+1}$ keeping 
$\pc^{-k}\qc_1^{-1}y_{l-1}$, $\pc^{-k}\qc_3^{-1}y_{l+1}$ outside
for all $k\ge0$, where $y_0=y_n$ and $y_{n+1}=y_1$.    

\subsection{Commutators}\label{FFFF}
First we note the following relations of formal series, which follow from 
Lemma \ref{cont-dress} and Lemma \ref{cont-zer}.

\begin{lem}\label{EFcom}
(1) If $i\equiv k \bmod m$ and $j\equiv l\bmod n$, then 
\begin{align*}
&\mbE^{i,j}(z)\check{\mbE}^{k,l}(w)
-q^{\delta^{(m)}_{i,0}-\delta^{(n)}_{0,l}}\check{\mbE}^{k,l}(w)\mbE^{i,j}(z)
=:\mbE^{i,j}(z)\check{\mbE}^{k,l}(w):
\\
&
\quad\times
\delta\Bigl(q^{m-n}q_3^k\qc_3^{-j}\frac{w}{z}
\Bigr)\bigl(q^{n-1-j}z\bigr)^{-1}d^{-k}q^{-\delta^{(n)}_{0,l}}\,,
\\
&\mbF^{i,j}(z)\check{\mbF}^{k,l}(w)
-q^{-\delta^{(m)}_{i,0}+\delta^{(n)}_{0,l}}\check{\mbF}^{k,l}(w)\mbF^{i,j}(z)
=:\mbF^{i,j}(z)\check{\mbF}^{k,l}(w):
\\
&
\quad\times
\delta\Bigl(q_1^{-k}\qc_1^{j}\frac{w}{z}
\Bigr)\bigl(q^{j}z\bigr)^{-1}d^{-k}q^{-1+\delta^{(n)}_{0,l}}
\,.
\end{align*}

(2) If $i-1\equiv k \bmod m$ and $j\equiv l-1\bmod n$, then 
\begin{align*}
&\mbE^{i,j}(z)\check{\mbE}^{k,l}(w)
-q^{-\delta^{(m)}_{i,0}+\delta^{(n)}_{0,l}}\check{\mbE}^{k,l}(w)\mbE^{i,j}(z)
=:\mbE^{i,j}(z)\check{\mbE}^{k,l}(w):
\\
&
\quad\times
\delta\Bigl(q^{m-n}q_3^{k+1}\qc_3^{-j-1}\frac{w}{z}
\Bigr)
\bigl(q^{n-1-j}z\bigr)^{-1}d^{-k-1}q^{-1+\delta^{(n)}_{0,l}}\,,
\\
&\mbF^{i,j}(z)\check{\mbF}^{k,l}(w)
-q^{\delta^{(m)}_{i,0}-\delta^{(n)}_{0,l}}\check{\mbF}^{k,l}(w)\mbF^{i,j}(z)
=:\mbF^{i,j}(z)\check{\mbF}^{k,l}(w):
\\
&
\quad\times
\delta\Bigl(q_1^{-k-1}\qc_1^{j+1}\frac{w}{z}
\Bigr)
\bigl(q^{j}z\bigr)^{-1}d^{-k-1}q^{-\delta^{(n)}_{0,l}}
\,.
\end{align*}

(3) If  $i\equiv k \bmod m$ and $j\equiv l-1\bmod n$, then 
\begin{align*}
&\mbE^{i,j}(z)\check{\mbF}^{k,l}(w)
-q^{-\delta^{(m)}_{i,0}+\delta^{(n)}_{0,l}}\check{\mbF}^{k,l}(w)\mbE^{i,j}(z)
=:\mbE^{i,j}(z)\check{\mbF}^{k,l}(w):
\\
&
\quad\times
\delta\Bigl(q^nq_1^{k}\qc_3^{j+1}\frac{z}{w}
\Bigr)\bigl(q^{n-1-j}z\bigr)^{-1}d^{-k}q^{-1+\delta^{(n)}_{0,l}}
\,,
\end{align*}
and if  $i-1\equiv k \bmod m$ and $j\equiv l\bmod n$, then 
\begin{align*}
&\mbE^{i,j}(z)\check{\mbF}^{k,l}(w)
-q^{\delta^{(m)}_{i,0}-\delta^{(n)}_{0,l}}\check{\mbF}^{k,l}(w)\mbE^{i,j}(z)
=:\mbE^{i,j}(z)\check{\mbF}^{k,l}(w):
\\
&
\quad\times
\delta\Bigl(q^nq_1^{k+1}\qc_3^{j}\frac{z}{w}
\Bigr)
\bigl(q^{n-1-j}z\bigr)^{-1}d^{-k-1}q^{-\delta^{(n)}_{0,l}}
\,.
\end{align*}

(4) In all cases other than those given above, we have
\begin{align*}
&\mbE^{i,j}(z)\check{\mbE}^{k,l}(w)=\check{\mbE}^{k,l}(w)\mbE^{i,j}(z)
\times q^{-D^{(m)}_{i,k}\delta^{(n)}_{0,l}+D^{(n)}_{j,l}\delta^{(m)}_{i,0}}\,,
\\
&\mbF^{i,j}(z)\check{\mbF}^{k,l}(w)=\check{\mbF}^{k,l}(w)\mbF^{i,j}(z)
\times q^{D^{(m)}_{i,k}\delta^{(n)}_{0,l}-D^{(n)}_{j,l}\delta^{(m)}_{i,0}}\,,
\\
&\mbE^{i,j}(z)\check{\mbF}^{k,l}(w)=\check{\mbF}^{k,l}(w)\mbE^{i,j}(z)
\times q^{D^{(m)}_{i,k}\delta^{(n)}_{0,l}+D^{(n)}_{j,l}\delta^{(m)}_{i,0}}\,.
\end{align*}
\end{lem}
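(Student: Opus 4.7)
The plan is to prove Lemma \ref{EFcom} by direct computation using the contraction data of Lemmas \ref{cont-dress} and \ref{cont-zer}. I would begin by decomposing each dressed current into oscillator and zero-mode parts, $\mbE^{i,j}(z)=\,:e^{\mbA^{i,j}(z)}:\,U^{i,j}(z)$, and similarly for $\mbF^{i,j}(z)$, $\check{\mbE}^{k,l}(w)$, $\check{\mbF}^{k,l}(w)$. Then $\mbE^{i,j}(z)\check{\mbE}^{k,l}(w)$ factors as the oscillator contraction (Table \ref{EEc}) times the zero-mode contraction (Lemma \ref{cont-zer}) times a normal-ordered product, and the reversed product admits a parallel factorisation using Table \ref{EcE}. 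The four cases of the lemma correspond exactly to the four qualitative situations in Tables \ref{EEc}--\ref{FcE}.

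In cases (1)--(3) the oscillator contractions have a single simple pole in $w/z$. For case (1) with $i\equiv k$ and $j\equiv l$, the contractions are $(1-aw/z)^{-1}$ expanded in $w/z$ and $(1-a^{-1}z/w)^{-1}$ expanded in $z/w$, where $a=q^{m-n}q_3^{k}\qc_3^{-j}$; these are two different formal-series expansions of essentially the same rational function, related by the rational identity $(1-aw/z)^{-1}+(1-a^{-1}z/w)^{-1}=1$. The main analytic input is the formal series identity
\begin{align*}
\iota_{w/z}\frac{1}{1-aw/z}-\iota_{z/w}\frac{1}{1-aw/z}=\delta(aw/z),
\end{align*}
from which the delta function on the right hand side of the lemma will arise.

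The bridge between the two orderings is provided by Lemma \ref{cont-zer}, which, after resolving the signs from the anticommutations $e^{\e_{a,b}}e^{\e_{c,d}}=-e^{\e_{c,d}}e^{\e_{a,b}}$ (for distinct lattice points), expresses $U^{i,j}(z)\Uc^{k,l}(w)$ as a scalar proportional to $aw/z$ (times a $q$-power producing the $\delta^{(m)}_{i,0}$ and $\delta^{(n)}_{0,l}$ contributions) multiplied by $\Uc^{k,l}(w)U^{i,j}(z)$. This scalar is precisely what converts the difference $\mbE^{i,j}(z)\check{\mbE}^{k,l}(w)-q^{\delta^{(m)}_{i,0}-\delta^{(n)}_{0,l}}\check{\mbE}^{k,l}(w)\mbE^{i,j}(z)$ into the combination required by the displayed identity, yielding $\delta(aw/z)$ times a common prefactor; the residue $(q^{n-1-j}z)^{-1}d^{-k}q^{-\delta^{(n)}_{0,l}}$ is then read off by specialising the zero-mode scalar to the pole $w=z/a$. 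Cases (2), (3) and the $\mbE\check{\mbF}$ statement proceed identically, with the respective pole locations taken from the appropriate rows of Tables \ref{EEc}--\ref{FcE}. Case (4) is simpler: the relevant oscillator contractions equal $1$, so the two orderings differ only by the zero-mode scalar of Lemma \ref{cont-zer}, which evaluates to the stated $q^{\pm D^{(m)}_{i,k}\delta^{(n)}_{0,l}\mp D^{(n)}_{j,l}\delta^{(m)}_{i,0}}$.

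The main obstacle will be the careful bookkeeping of the signs arising from anticommutations of the $e^{\pm\e_{s,t}}$ operators when comparing $:U^{i,j}(z)\Uc^{k,l}(w):$ with $:\Uc^{k,l}(w)U^{i,j}(z):$, together with the boundary modifications when $i=0$ or $l=0$ (where the alternative zero-mode formulas \eqref{Fock6}--\eqref{Fock7} and \eqref{dFock6}--\eqref{dFock7} activate the $\delta^{(m)}_{i,0}$ and $\delta^{(n)}_{0,l}$ exponents) and the case distinctions according to whether $i\equiv k$ or $i-1\equiv k\bmod m$ (respectively, $j\equiv l$ or $j\equiv l-1\bmod n$). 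These distinctions are exactly what interpolates between the four cases of the lemma, and once handled the result follows by an entirely mechanical calculation.
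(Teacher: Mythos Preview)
Your proposal is correct and matches the paper's approach exactly: the paper states only that the lemma follows from Lemma~\ref{cont-dress} and Lemma~\ref{cont-zer}, and your plan spells out precisely this mechanism---combine the oscillator contractions from Tables~\ref{EEc}--\ref{FcE} with the zero-mode contractions of Lemma~\ref{cont-zer}, convert the difference of the two formal expansions of a simple pole into a delta function, and track the $q$-powers through the $i=0$, $l=0$ boundary cases and the anticommutation signs. There is nothing to add.
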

We recall that $D^{(m)}_{i,k}, D^{(n)}_{j,l}$ are given in \eqref{Dmn}.
\medskip

Our task is to check the vanishing of the commutator
\begin{align*}
&[\mbG_{\mu,1}(\bs p),\check{\mbG}_{\nu,1}(\bs \pc)]
=
\int\!\!\cdots\!\!\int_C \prod_{s=1}^{m} 
\frac{dx_{s}}{2\pi\sqrt{-1} x_{s}}\,
\int\!\!\cdots\!\!\int_{\check{C}} \prod_{t=1}^{n} 
\frac{dy_{t}}{2\pi\sqrt{-1}y_{t}}\,
\\
&\times
[\mbF_1(x_{1})\cdots \mbF_{m}(x_{m}),
\check{\mbF}_1(y_{1})\ldots \check{\mbF}_{n}(y_{n})]
h_{\mu,1}(x_{1},\ldots,x_{m};\bs p)
\check{h}_{\nu,1}(y_1,\ldots,y_n;\bs \pc)\,
\end{align*}
in the sense of matrix elements. 
\medskip

We begin by rewriting 
\begin{align*}
&[\mbF_1(x_{1})\cdots \mbF_{m}(x_{m}),
\check{\mbF}_1(y_{1})\ldots \check{\mbF}_{n}(y_{n})]
\\
&=\sum_{0\le j_1,\ldots,j_m\le n-1}\sum_{0\le k_1,\ldots,k_n\le m-1}
u_{j_m}\check{u}_{k_n}
[\mbF^{1,j_1}_1(x_{1})\cdots \mbF^{m,j_m}_{m}(x_{m}),
\check{\mbF}^{k_1,1}_1(y_{1})\ldots \check{\mbF}^{k_n,n}_{n}(y_{n})]\,.
\end{align*}
For $\bj=(j_1,\ldots,j_m)$, $\bk=(k_1,\ldots,k_n)$, let 
$c_{a,b}(\bj,\bk)=D^{(m)}_{a,k_b}\delta^{(n)}_{0,b}-D^{(n)}_{j_a,b}\delta^{(m)}_{a,0}$. 
Pushing $\mbF^{m,j_m}_{m}(x_{m})$ through to the right, we obtain
\begin{align*}
&\mbF^{m,j_m}(x_{m})\cdot 
\prod_{1\le t\le n}^{\curvearrowright}\check{\mbF}^{k_t,t}(y_t)
\\
&=\sum_{l=1}^n q^{\sum_{b=1}^{l-1}c_{m,b}(\bj,\bk)}
\prod_{1\le t\le l-1}^{\curvearrowright}\check{\mbF}^{k_t,t}(y_t)
\cdot
[\mbF^{m,j_m}(x_{m}), \check{\mbF}^{k_{l},l}(y_{l})]_{q^{c_{m,l}(\bj,\bk)}}
\cdot
\prod_{l+1\le t\le n}^{\curvearrowright}\check{\mbF}^{k_t,t}(y_t)
\\
&+ q^{D^{(m)}_{m,k_n}}
\prod_{1\le t\le n}^{\curvearrowright}\check{\mbF}^{k_t,t}(y_t)
\cdot\mbF^{m,j_m}(x_{m})\,,
\end{align*}
where we use $\sum_{b=1}^nc_{a,b}(\bj,\bk)=D^{(m)}_{a,k_n}$.  
Continuing the same way and noting that $\sum_{a=1}^mD^{(m)}_{a,k_n}=0$, 
we arrive at 
\begin{align*}
&[\mbF_1(x_{1})\cdots \mbF_{m}(x_{m}),
\check{\mbF}_1(y_{1})\ldots \check{\mbF}_{n}(y_{n})]
\\
&=\sum_{0\le j_1,\ldots,j_m\le n-1}\sum_{0\le k_1,\ldots,k_n\le m-1}
u_{j_m}\check{u}_{k_n}
\sum_{i=1}^m\sum_{l=1}^n 
 q^{\sum_{b=1}^{l-1}c_{i,b}(\bj,\bk)+\sum_{a=i+1}^mD^{(m)}_{a,k_n}}\\
&\times
\prod_{1\le s\le i-1}^{\curvearrowright}{\mbF}^{s,j_s}(x_{s})
\prod_{1\le t\le l-1}^{\curvearrowright}\check{\mbF}^{k_t,t}(y_t)
\cdot
[\mbF^{i,j_i}(x_{i}), \check{\mbF}^{k_{l},l}(y_{l})]_{q^{c_{i,l}(\bj,\bk)}}
\cdot 
\prod_{l+1\le t\le n}^{\curvearrowright}\check{\mbF}^{k_t,t}(y_t)
\prod_{i+1\le s\le m}^{\curvearrowright}{\mbF}^{s,j_s}(x_{s})\,.
\end{align*}
By Lemma \ref{EFcom}, the factor
$[\mbF^{i,j_i}(x_{i}), \check{\mbF}^{k_{l},l}(y_{l})]_{q^{c_{i,l}(\bj,\bk)}}$
is a sum of two terms containing delta functions. 
By the same argument as in the proof of Lemma \ref{EFEcFc}, 
these terms cancel out under the integral if $1\le i\le m-1$ and $1\le l\le n-1$.
It remains to consider the three cases
\begin{enumerate}
 \item $1\le i\le m-1$, $l=n$
\item $i=m$, $1\le l\le n-1$
\item $i=m$, $l=n$
\end{enumerate}

\iffalse
Formal series relations in Lemma \ref{EFcom0} amount to statements about
poles of matrix elements. 
For example, \eqref{FiFcn}  
is equivalent to saying that matrix elements of 
$\mbF_i(z)\check{\mbF}_0(w)$ and 
$\check{\mbF}_0(w)\mbF_i(z)$ converge to the same 
rational function in $w$ which has simple poles at $w=q_1^iz$ and 
$w=p^{-1}q_1^iz$, 
with residues 
$\uc_i(d^{\,i} z)^{-1}:\mbF^{i,0}(z)\check{\mbF}^{i,0}(q_1^iz):$ and
$\uc_{i-1}(d^{\,i} q^{n}z)^{-1}
:\mbF^{i,n-1}(z)\check{\mbF}^{i-1,0}(p^{-1}q_1^iz):$,
respectively. 
\fi
\medskip

\subsection{Commutativity of $\mbG_{\mu,1}$ and $\check{\mbG}_{\nu,1}$}
We now consider the case (i) in some detail. 
By Lemma \ref{EFEcFc}, non-trivial contributions arise only from terms with 
$(j_i,k_l)=(0,i), (n-1,i-1)$. Apart from a common factor, they give a sum $I+II$ with
\begin{align*}
I=&\check{u}_i\delta\bigl(q_1^{-i}y_n/x_i\bigr) x_i^{-1}
:\mbF^{i,0}(x_i)\check{\mbF}^{i,0}(q_1^ix_i):q^{-1}
\,,\\
II=&\check{u}_{i-1}\delta\bigl(q_1^{-i}py_n/x_i\bigr) q^{-n}x_i^{-1}
:\mbF^{i,n-1}(x_i)\check{\mbF}^{i-1,0}(p^{-1}q_1^ix_i):q^{\delta_{i,1}}\,.
\end{align*}
The powers $q^{-1}$, $q^{\delta_{i,1}}$ are due to 
$q^{\sum_{a=i+1}^mD^{(m)}_{a,k_n}}$. 

Let us examine the pole structure of the term $I$. 
With respect to $y_n$, 
the product $\mbF^{i,0}(x_i)\check{\mbF}^{i,0}(y_n)$ 
has a pole at $q_1^ix_i$ which corresponds to the delta function. In addition,
there is another pole at $y_n=p^{-1}q_1^{i+1}x_{i+1}$ coming from 
the contraction of $\check{\mbF}^{i,0}(y_n)\mbF^{i+1,n-1}(x_{i+1})$. 
Upon taking the residue at $y_n=q_1^ix_i$, a new pole 
$p^{-1}q_ix_{i+1}$ is produced with respect to $x_i$.  
This pole must be inside the contour of the $x_i$ integral
because analytic continuation is made from the region
$|y_n|\gg |x_{i+1}|$. The pole $x_i=q_3^{-1}x_{i+1}$ 
comes only from  the contraction of $\mbF^{i,0}(x_i)\mbF^{i+1,0}(x_{i+1})$. 
On the other hand, the contraction of 
$\check{\mbF}^{i,0}(y_n)\mbF^{i+1,0}(x_{i+1})$ gives 
$1-q^2q_1^{i+1}x_{i+1}/y_n$, which cancels the pole $x_i=q_3^{-1}x_{i+1}$  
after the residue is taken in $y_n$. 
The situation is summarized in Figure \ref{Fig1}. 

\begin{figure}[H]
%%%%%%%%%%% * *  *  %%%%%%%%%%%%%%%%%%%
\begin{center}
\begin{tikzpicture}[scale=0.7]
\draw[dashed] (-9,5)--(-7,5);
\node at (-5,5){$\bullet$};
\node at (-3,5){$\bullet$};
\node  [above] at (-5,5){$pq_3x_{i-1}$};
\node [above] at (-3,5){$q_3x_{i-1}$}; %%%%
\node at (-1,3){$\bullet$};
\node at (1,3){$\bullet$};
\draw[dashed] (2,3)--(4,3);
\node  [above] at  (-1,3) {\small $q_1^{-1}x_{i-1}$};
\node  [above] at  (1.5,3) {\small $p^{-1}q_1^{-1}x_{i-1}$};%%%%
\draw[dashed] (-9,1)--(-7,1);%%%%
\node at (-5.3,1){$\bullet$};
\node at (-3.3,1){$\bullet$};
\node at (-1.3,1){\large $\star$};
\node  [below] at  (-6,1) {\small $pq_1x_{i+1}$};
\node  [below] at  (-3.5,1) {\small $q_1x_{i+1}$};
\node  [below] at  (-1.3,1) {\small $p^{-1}q_1x_{i+1}$};%%%%
\node at (-1,-1){$\circ$};
\node at (1,-1){$\bullet$};
\node at (3,-1){$\bullet$};
\draw[dashed] (4,-1)--(6,-1);
\node  [below] at  (-1.3,-1) {\small $q_3^{-1}x_{i+1}$};
\node  [below] at  (1.5,-1) {\small $p^{-1}q_3^{-1}x_{i+1}$};
\node  [below] at  (4.5,-1) {\small $p^{-2}q_3^{-1}x_{i+1}$};%%%%

\node [below] at (0,-3) {\small $x_i$};
\draw [thick] (0,-3)--(0,2);
\draw [thick] (0,2)--(-2,2);
\draw [-latex,thick] (-2,2)--(-2,7);
\end{tikzpicture}
\caption{The pole $q_3^{-1}x_{i+1}$ 
(shown by an open circle) is canceled and the pole $p^{-1}q_1x_{i+1}$ 
(shown by an asterisk) is created. \label{Fig1}}
\end{center}
\end{figure}
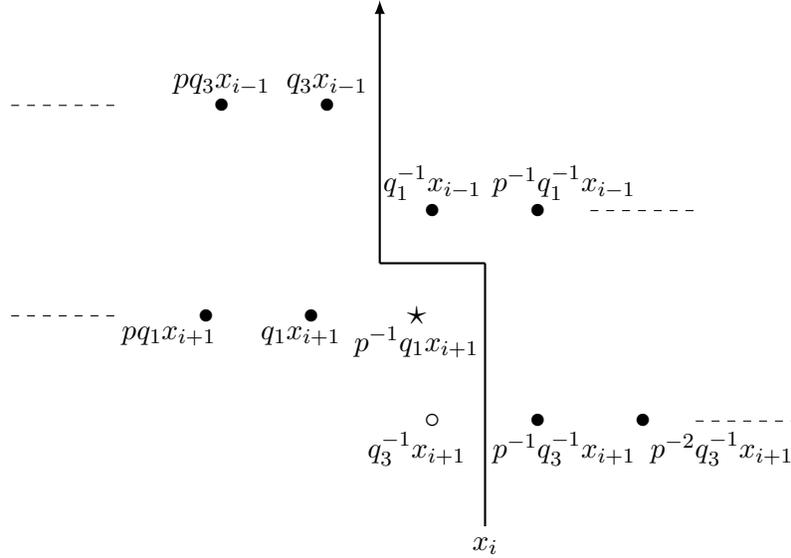

Consider similarly the product $\mbF^{i,n-1}(x_i)\check{\mbF}^{i-1,0}(y_n)$
corresponding to $II$. 
Poles in $y_n$ occur at $y_n=p^{-1}q_1^ix_i$, which corresponds 
to the delta function, and at $y_n=q_1^{i-1}x_{i-1}$ which comes from 
$\mbF^{i-1,0}(x_{i-1})\check{\mbF}^{i-1,0}(y_n)$. 
Upon integration the latter produces a pole at 
$x_i=pq_1^{-1}x_{i-1}$, which should be taken 
outside the contour of $x_i$ integral. 
The pole at $x_i=q_3x_{i-1}$ comes only from 
$\mbF^{i-1,n-1}(x_{i-1})\mbF^{i,n-1}(x_i)$. 
After $y_n$ integration it is canceled by the contraction 
factor $1-q^2\qc_1^nq_1^{-i+1}y_n/x_{i-1}$ coming from 
$\mbF^{i-1,n-1}(x_{i-1})\check{\mbF}^{i,0}(y_n)$. 
See  Figure \ref{Fig2}. 

\begin{figure}[H]
\begin{center}
\begin{tikzpicture}[scale=0.7]
\draw[dashed] (-9,5)--(-7,5);
\node at (-5,5){$\bullet$};
\node at (-3,5){$\circ$};
\node  [above] at (-5.5,5){$pq_3x_{i-1}$};
\node [above] at (-3,5){$q_3x_{i-1}$};%%%%
\node at (-3.5,3){$\star$};
\node at (-1,3){$\bullet$};
\node at (1,3){$\bullet$};
\draw[dashed] (2,3)--(4,3);
\node  [above] at  (-3,3) {\small $pq_1^{-1}x_{i-1}$};
\node  [above] at  (-0.5,3) {\small $q_1^{-1}x_{i-1}$};
\node  [above] at  (2.2,3) {\small $p^{-1}q_1^{-1}x_{i-1}$};%%%%
\draw[dashed] (-9,1)--(-7,1);
\node at (-5.3,1){$\bullet$};
\node at (-3.3,1){$\bullet$};
\node  [below] at  (-5.3,1) {\small $pq_1x_{i+1}$};
\node  [below] at  (-3.3,1) {\small $q_1x_{i+1}$};%%%%
\node at (-1,-1){$\bullet$};
\node at (1,-1){$\bullet$};
\node at (3,-1){$\bullet$};
\draw[dashed] (4,-1)--(6,-1);
\node  [below] at  (-1.3,-1) {\small $q_3^{-1}x_{i+1}$};
\node  [below] at  (1.5,-1) {\small $p^{-1}q_3^{-1}x_{i+1}$};
\node  [below] at  (4.5,-1) {\small $p^{-2}q_3^{-1}x_{i+1}$};%%%%

\node [below] at (-2.4,-2.5) {\small $x_i$};
\draw [thick] (-2.4,-2.5)--(-2.4,2);
\draw [thick] (-2.4,2)--(-4.2,2);
\draw [-latex,thick] (-4.2,2)--(-4.2,7);
\end{tikzpicture}
\caption{The contour in the $x_i$ plane. The pole $q_3x_{i-1}$ 
(shown by an open circle) is canceled and the pole $pq_1^{-1}x_{i-1}$ 
(shown by an asterisk) is created. 
\label{Fig2}}

\end{center}
\end{figure}
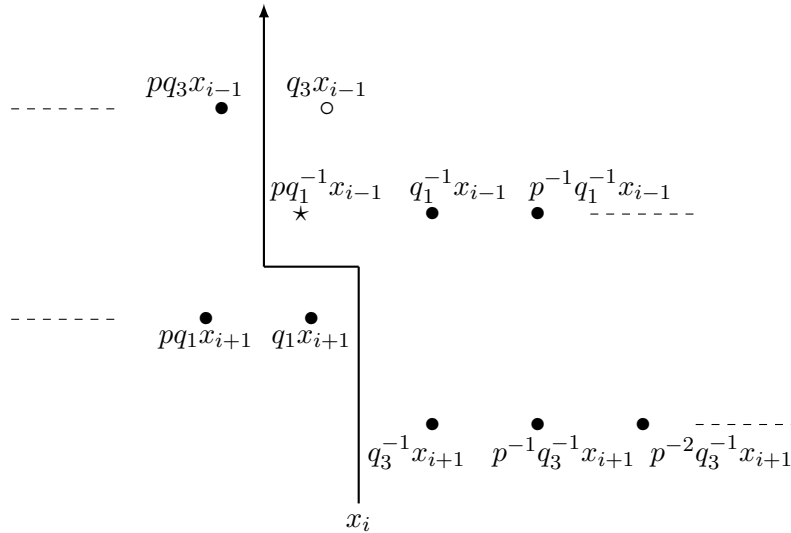

\vskip 1cm

After taking the residue in $y_n$, we perform the shift $x_i\to px_i$ in 
the term $II$. From  Figure \ref{Fig1} and Figure \ref{Fig2} 
the resulting contours become the same. 

In view of the identity \eqref{FbFb}, the term $I$ becomes
\begin{align*}
-\check{u}_ix_i^{-1}\dc^{-n}q
:\mbF^{i,n-1}(px_i)\check{\mbF}^{i-1,0}(q_1^ix_i):
q^{-\varepsilon_{i-1}+\varepsilon_i}\,. 
\end{align*} 

After the shift, the term $II$ becomes 
\begin{align*}
\check{u}_{i-1}x_i^{-1} q^{-n+\delta_{i,1}}
p^{-1}:\mbF^{i,n-1}(px_i)\check{\mbF}^{i-1,0}(q_1^ix_i):\,.
\end{align*}
In addition, the quasi-periodicity \eqref{quasi1} of $h_{\mu,1}$ give rise to  
a factor $p_iq_2^{-\delta_{i,1}}=\bar p_i q^{-2\delta_{i,1}}
q^{-\varepsilon_{i-1}+\varepsilon_i}$, which 
we bring to the position next to $\mbF^{i,n-1}(px_i)\check{\mbF}^{i-1,0}(y_n)$ 
noting
\begin{align*}
\prod_{i+1\le s\le m}^{\curvearrowright}{\mbF}^{s,j_s}(x_{s})\cdot p_i
=q^{1+\delta_{i,1}} \times p_i
\prod_{i+1\le s\le m}^{\curvearrowright}{\mbF}^{s,j_s}(x_{s})\,.
\end{align*}
Putting all these together, 
we conclude that the two terms cancel provided $\bar p_i\uc_{i-1}=\uc_i$. 

In a similar manner, we can show that 
the vanishing holds in the case (ii) of Subsection \ref{FFFF}
provided $\bar \pc_l u_{l-1}= u_l$. 
In the case (iii) the condition for vanishing becomes 
 $\bar p_m\uc_{m-1}=\uc_0$ and  $\bar \pc_n u_{n-1}= u_0$, which is consistent.  

We repeat essentially the same computation for general $M,N$, and for integrals of motion of both kinds.
We omit further details. 
\bigskip

{\bf Acknowledgments.}
The research of BF is supported by 
the Russian Science Foundation grant project 16-11-10316. 
MJ is partially supported by 
JSPS KAKENHI Grant Number JP16K05183. 
EM is partially supported by a grant from the Simons Foundation  
\#353831.

EM and BF would like to thank Kyoto University
for hospitality during their visits when this work was started. 

\bigskip


\begin{thebibliography}{0000000}

\bibitem[BLZ]{BLZ1}
V. Bazhanov, S. Lukyanov and A. Zamolodchikov, {\it 
Integrable structure of conformal field theory, quantum KdV theory and 
thermodynamic Bethe ansatz}, 
Commun. Math. Phys.
{\bf {177}} (1996), no.2, 381--398

\bibitem[BLZ1]{BLZ2}
V. Bazhanov, S. Lukyanov and A. Zamolodchikov, 
{\it Integrable structure of conformal field theory II. $Q$-operators
and DDV equation}, 
Commun. Math. Phys.
{\bf 190} (1997), no.2, 247--278

\bibitem[BLZ2]{BLZ3}
V. Bazhanov, S. Lukyanov and A. Zamolodchikov, 
{\it Integrable structure of conformal field theory III. The
Yang-Baxter relation}, 
Commun. Math. Phys.
{\bf 200} (1999), no.2, 297--324

\bibitem[FFR]{FFR} 
B. Feigin, E. Frenkel, and N. Reshetikhin, 
{\it Gaudin model, Bethe ansatz and critical level},  
Commun. Math. Phys. 166 (1994), no.1, 27--62

\bibitem[FJM]{FJM} B. Feigin, M. Jimbo, and E. Mukhin, 
{\it Integrals of motion from quantum toroidal algebras}, 
{\em J.Phys.A: Math. Theor.} {\bf 50} (2017) 464001 

\bibitem[FJMM]{FJMM} B. Feigin, M. Jimbo, T. Miwa, and E. Mukhin, 
{\it Branching rules for quantum toroidal $\mathfrak{gl}_N$}, 
 Adv. Math. \textbf{300} (2016) 229--274


\bibitem[FJMM1]{FJMM1} B. Feigin, M. Jimbo, T. Miwa, and E. Mukhin, 
{\it Quantum toroidal $\mathfrak{gl}_1$
and Bethe ansatz}, J.Phys.A: Math. Theor. \textbf{48} (2015) 244001

\bibitem[FJMM2]{FJMM2} B. Feigin, M. Jimbo, T. Miwa, and E. Mukhin, 
{\it Finite type modules and  Bethe ansatz for the quantum 
toroidal $\mathfrak{gl}_1$}, Commun. Math. Phys., \textbf{356},  (2017), no.1, 285–-327

\bibitem[FJMM3]{FJMM3} B. Feigin, M. Jimbo, T. Miwa, and E. Mukhin, {\it Representations of quantum toroidal $\gl_n$}, J. Algebra {\bf 380} (2013), 78–-108

\bibitem[FKSW]{FKSW} B. Feigin, T. Kojima, J. Shiraishi and 
H. Watanabe, 
{\it The integrals of motion for the deformed Virasoro algebra},
arXiv:0705.0427v2

\bibitem[FKSW1]{FKSW1} B. Feigin, T. Kojima, J. Shiraishi and H. Watanabe,
{\it The integrals of motion for the deformed $W$ algebra
$W_{q,t}\bigl(\widehat{\mathfrak{sl}}_N\bigr)$}, 
arXiv:0705.0627v1

\bibitem[KS]{KS} T. Kojima and J. Shiraishi,
{\it The integrals of motion for the deformed $W$ algebra
$W_{q,t}\bigl(\widehat{\mathfrak{sl}}_N\bigr)$ II:
Proof of the commutation relations}, 
 Commun. Math. Phys. \textbf{283} (2008), no.3, 795--851 

\bibitem[Mi]{Mi} K. Miki, {\it Toroidal braid group action and an automorphism
of toroidal algebra $U_q\bigl(\mathfrak{sl}_{n+1,tor}\bigr)$ ($n\ge2$)},
{Lett. Math. Phys.} {\bf 47} (1999), no.4,  365--378

\bibitem[MTV]{MTV1} E. Mukhin, V. Tarasov and A. Varchenko, 
{\it A Generalization of the Capelli Identity},
Algebra, arithmetic, and geometry: in honor of Yu. I. Manin. Vol. II, 
Progr. Math., 270 (2009),  Birkhauser Boston, Inc., Boston, MA, 383--398


\bibitem[MTV1]{MTV2} E. Mukhin, V. Tarasov and A. Varchenko, 
{\it Bispectral and $(\gl_N,\gl_M)$ dualities},
 Funct. Anal. Other Math. {\bf 1} (2006), no.1, 47--69


\bibitem[MTV2]{MTV3} E. Mukhin, V. Tarasov and A. Varchenko, 
{\it Bispectral and $(\gl_N,\gl_M)$ dualities, discrete versus differential},
Adv. Math. {\bf 218} (2008) no.1, 215--265



\bibitem[Sa]{Sa} Y. Saito, {\it Quantum toroidal algebras and their vertex representations},
 {Publ. RIMS, Kyoto Univ.} \textbf{34} (1998), no.2, 155--177

\bibitem[STU]{STU} Y. Saito, K. Takemura, and D. Uglov, {\it Toroidal actions on level 1 modules for $U_q(\hat{\mathfrak{sl}}_n)$}, Transform. Groups \textbf{3} (1998), 
no. 1, 75--102


\end{thebibliography}
\end{document}